\numberwithin{equation}{section}
\newtheorem{thm}[equation]{Theorem}
\newtheorem{prop}[equation]{Proposition}
\newtheorem{lem}[equation]{Lemma}
\newtheorem{cor}[equation]{Corollary}
\theoremstyle{definition}
\newtheorem{defn}[equation]{Definition}
\theoremstyle{remark}
\newtheorem{remk}[equation]{Remark}
\newtheorem{remks}[equation]{Remarks}
\newtheorem{exm}[equation]{Example}
\newtheorem{exms}[equation]{Examples}
\newtheorem{notat}[equation]{Notation}
\newenvironment{rem}{\begin{remk}}%
{\hfill$\square$\end{remk}}
{\hfill$\square$\end{remks}}
\newenvironment{ex}{\begin{exm}}%
{\hfill$\square$\end{exm}}
{\hfill$\square$\end{exms}}
{\hfill$\square$\end{notat}}
\newcommand{\thmref}{Theorem~\ref}
\newcommand{\propref}{Proposition~\ref}
\newcommand{\corref}{Corollary~\ref}
\newcommand{\defref}{Definition~\ref}
\newcommand{\lemref}{Lemma~\ref}
\newcommand{\sA}{{\mathcal A}}
\newcommand{\sB}{{\mathcal B}}
\newcommand{\sE}{{\mathcal E}}
\newcommand{\sF}{{\mathcal F}}
\newcommand{\sG}{{\mathcal G}}
\newcommand{\sH}{{\mathcal H}}
\newcommand{\sI}{{\mathcal I}}
\newcommand{\sL}{{\mathcal L}}
\newcommand{\sO}{{\mathcal O}}
\newcommand{\sP}{{\mathcal P}}
\newcommand{\sQ}{{\mathcal Q}}
\newcommand{\sS}{{\mathcal S}}
\newcommand{\sU}{{\mathcal U}}
\newcommand{\sV}{{\mathcal V}}
\newcommand{\sW}{{\mathcal W}}
\newcommand{\sX}{{\mathcal X}}
\newcommand{\sY}{{\mathcal Y}}
\newcommand{\sZ}{{\mathcal Z}}
\newcommand{\A}{{\mathbb A}}
\newcommand{\G}{{\mathbb G}}
\newcommand{\N}{{\mathbb N}}
\renewcommand{\P}{{\mathbb P}}
\newcommand{\Z}{{\mathbb Z}}
\newcommand{\surj}{\twoheadrightarrow}
\newcommand{\inj}{\hookrightarrow}
\newcommand{\Hom}{{\rm Hom}}
\newcommand{\Spec}{{\rm Spec}}
\newcommand{\qc}{{\rm qc}}
\newcommand{\pc}{{\rm pc}}
\newcommand{\perf}{{\rm perf}}
\newcommand{\holim}{\mathop{{\rm holim}}}
\newcommand{\GL}{{\operatorname{\rm GL}}}
\newcommand{\Nis}{{\operatorname{Nis}}}
\newcommand{\ds}{{/\kern-3pt/}}
\newcommand{\Proj}{{\operatorname{Proj}}}
\newcommand{\colim}{\mathop{\text{\rm colim}}}
\newcommand{\Mod}{{\mathrm{Mod}}}
\newcommand{\Coh}{{\mathrm{Coh}}}
\newcommand{\Ch}{{\mathrm{Ch}}}
\newcommand{\QC}{{\mathrm{QCoh}}}
\newcommand{\LisEt}{{\mathrm{Lis}\textrm{-}\mathrm{\acute Et}}}
\renewcommand{\dim}{\text{\rm dim}}
\newcommand{\tuborg}{\left\{\begin{array}{ll}}
\newcommand{\sluttuborg}{\end{array}\right.}
\newcommand{\wt}{\widetilde}
\DeclareMathOperator*{\hocolim}{hocolim}
\begin{document}
 
\title{Vanishing theorems for the negative $K$-theory of stacks}
\author{Marc Hoyois and Amalendu Krishna}
\address{Department of Mathematics, University of Southern California, 
Los Angeles, CA, USA}
\email{hoyois@usc.edu}
\address{School of Mathematics, Tata Institute of Fundamental Research,  
1 Homi Bhabha Road, Colaba, Mumbai, India}
\email{amal@math.tifr.res.in}
\thanks{The first author was partially supported by NSF grant DMS-1508096}

\keywords{Algebraic $K$-theory, Negative $K$-theory, 
Algebraic stacks}

\subjclass[2010]{Primary 19D35; Secondary 14D23}

\begin{abstract}
We prove that the homotopy algebraic $K$-theory of 
tame quasi-DM stacks satisfies cdh-descent. We apply this descent result to prove 
that if $\sX$ is a Noetherian tame quasi-DM stack and $i<-\dim(\sX)$, then 
$K_i(\sX)[1/n]=0$ (resp.\ $K_i(\sX,\Z/n)=0$) provided that
$n$ is nilpotent on $\sX$ (resp.\ is invertible on $\sX$).
Our descent and vanishing results apply more generally to certain Artin stacks
whose stabilizers are extensions of finite group schemes by group schemes
of multiplicative type.
\end{abstract} 

\setcounter{tocdepth}{2}
\maketitle
\tableofcontents  
%{\hypersetup{linkcolor=black} \tableofcontents}

\section{Introduction}\label{sec:Intro}
The negative $K$-theory of rings was defined by Bass \cite{Bass} and it was 
later generalized to all schemes by Thomason and Trobaugh \cite{TT},
who established its fundamental properties such as
localization, excision, Mayer--Vietoris, and the projective bundle formula.

As explained in \cite{TT}, these properties of $K$-theory give rise
to the Bass--Thomason--Trobaugh non-connective $K$-theory, or $K^B$-theory,
 which is usually non-trivial in negative degrees for singular schemes.
  A famous conjecture of Weibel
asserts that for a Noetherian scheme $X$ of Krull dimension $d$, the group 
$K_i(X)$ vanishes for $i < -d$. This conjecture was settled by
Weibel for excellent surfaces \cite{Weibel-3}, by
Corti{\~n}as, Haesemeyer, Schlichting and Weibel for schemes essentially of finite type
over a field of characteristic zero \cite{CHSW}, and recently by Kerz, Strunk and Tamme
for all Noetherian schemes \cite{KST}.

Before a complete proof of Weibel's conjecture for schemes appeared in
\cite{KST}, Kelly \cite{Kelly} used the alteration methods of de Jong and 
Gabber to show that the vanishing conjecture for negative 
$K$-theory holds in characteristic $p > 0$ if one is allowed to invert $p$. 
Later, Kerz and Strunk \cite{KS} gave a different proof of Kelly's theorem
by proving Weibel's conjecture for negative homotopy $K$-theory, or $KH$-theory, a variant
of $K$-theory introduced by Weibel \cite{Weibel-1}.
In their proof, Kerz and Strunk used the method of flatification by blow-up
instead of alterations. 

It is natural to ask for an extension of Weibel's conjecture to algebraic stacks. 
The algebraic $K$-theory of quotient stacks was
introduced by Thomason \cite{Thom} in order to study algebraic $K$-theory
of a scheme which can be equipped with an action of a group
scheme. The localization, excision, and 
Mayer--Vietoris properties for the algebraic $K$-theory of tame Deligne--Mumford stacks
were proven by the second author and {\O}stv{\ae}r \cite{KO}, and
 together with the projective bundle formula they were established for 
more general quotient stacks by the second author and Ravi \cite{KR-2}. 
The $K^B$-theory of Bass--Thomason--Trobaugh and the $KH$-theory of Weibel
 were also generalized to such quotient stacks in \cite{KR-2}.

The purpose of this paper is to show that the approach of Kerz and Strunk 
can be generalized to a large class
of algebraic stacks, including all tame Artin stacks in the sense of \cite{AOV}.
As a consequence, we will obtain a generalization of Kelly's vanishing theorem for
the negative $K$-theory of such stacks.

\subsection{Vanishing of negative $K$-theory of 
stacks}\label{sec:Neg-K*}

Our main results apply to certain algebraic stacks with finite or 
multiplicative type stabilizers. More precisely, 
let $\mathbf{Stk}'$ be the category consisting of the following algebraic stacks:
\begin{itemize}
	\item stacks with separated diagonal and linearly reductive finite stabilizers;
	\item stacks with affine diagonal whose stabilizers are
	extensions of linearly reductive finite groups by groups of multiplicative type.
\end{itemize}
Note that $\mathbf{Stk}'$ contains tame Artin stacks with separated diagonal in the sense of \cite{AOV}.
The {\sl blow-up dimension} of a Noetherian stack $\sX$ is a modification of the Krull dimension which is invariant
under blow-ups (see \defref{defn:cov-dim}); it coincides with the usual dimension when $\sX$ is a quasi-DM stack.

\begin{thm}[See Theorems \ref{thm:Vanishing-KH}, \ref{thm:DM-stack}, and \ref{thm:Vanishing-K}]
	\label{thm:Main-1}
	Let $\sX$ be a stack in $\mathbf{Stk}'$ satisfying the resolution property
	or having finite inertia. 
	Assume that $\sX$ is Noetherian of blow-up dimension $d$.
Then the following hold.
\begin{enumerate}
\item
$KH_i(\sX) = 0$ for $i < -d$.
\item If $n$ is nilpotent on $\sX$,
$K_i(\sX)[{1}/{n}] = 0$ for $i < -d$.
\item If $n$ is invertible on $\sX$,
$K_i(\sX, {\Z}/n) = 0$ for $i < -d$.
\end{enumerate}
\end{thm}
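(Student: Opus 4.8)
The plan is to carry out the three-part strategy of Kerz and Strunk \cite{KS} for stacks: first establish cdh-descent for $KH$, then deduce the vanishing $KH_i(\sX)=0$ for $i<-d$ by a cohomological-dimension argument, and finally pass from $KH$ to $K$ after inverting $n$ (resp.\ with $\Z/n$-coefficients) by controlling the fiber of the natural map $K\to KH$. Part (1) of the theorem is the output of the second step, and parts (2) and (3) of the third.

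\emph{Step 1: cdh-descent for $KH$.} I would first reduce to a local situation. By Keel--Mori and the structure theory of tame stacks in \cite{AOV}, any $\sX$ in $\mathbf{Stk}'$ with finite inertia (or with the resolution property) is, {\'e}tale-locally over a suitable algebraic space, a quotient $[\Spec A/G]$ with $G$ linearly reductive (finite, or an extension of a finite linearly reductive group by a group of multiplicative type). Since $K$-theory of such quotient stacks satisfies Nisnevich descent, is homotopy invariant after $\A^1$-localization, and satisfies the projective bundle formula by the results of Krishna and Ravi \cite{KR-2}, one can run the descent criterion of Cisinski and Haesemeyer, which for $KH$ is valid in all characteristics: a presheaf of spectra satisfying Nisnevich descent, $\A^1$-invariance and the blow-up formula satisfies cdh-descent. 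The inputs left to verify are geometric --- that blow-ups of stacks in $\mathbf{Stk}'$ and their exceptional divisors remain in $\mathbf{Stk}'$, and that linear reductivity of the stabilizers makes Thomason's blow-up formula and deformation to the normal cone go through for the equivariant $K$-theory of these quotient stacks. Then $KH=L_{\A^1}K$ inherits cdh-descent.

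\emph{Step 2: vanishing of $KH$.} With cdh-descent one has the descent spectral sequence $E_2^{p,q}=H^p_{\mathrm{cdh}}(\sX,\,a_{\mathrm{cdh}}\pi_q KH)\Rightarrow KH_{q-p}(\sX)$, and two facts then conclude. First, the cdh-cohomological dimension of a Noetherian stack is bounded by its blow-up dimension $d$: the cdh-topology is generated by Nisnevich covers and blow-up squares, both of which respect this bound, which is precisely why \defref{defn:cov-dim} is the right invariant here. Second, on cdh-local objects $KH$ is connective, so $a_{\mathrm{cdh}}\pi_q KH=0$ for $q<0$; for schemes this is the vanishing of negative $K$-theory of henselian valuation rings, and the stacky case reduces to it via the structure theorem together with linear reductivity. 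A nonzero contribution to $KH_i(\sX)$ with $i<-d$ would then force $q\ge 0$ and $p=q-i>d$, which is impossible; this proves (1).

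\emph{Step 3: from $KH$ to $K$.} Put $\sF=\fib(K\to KH)$; by the long exact sequence and part (1) it suffices to show $\pi_i\sF(\sX)[1/n]=0$ (resp.\ $\pi_i(\sF(\sX);\Z/n)=0$) for $i<-d$. When $n$ is nilpotent on $\sX$ every residue characteristic divides $n$, and I would show that $\sF[1/n]$ again satisfies cdh-descent on these stacks --- using the pro-cdh descent of algebraic $K$-theory \cite{KST} together with the fact that the relevant pro-systems of infinitesimal thickenings become controllable after inverting the residue characteristics --- so that the argument of Step 2 applies verbatim to $\sF[1/n]$. When $n$ is invertible on $\sX$, a rigidity argument in the spirit of Gabber and Suslin identifies $K(-;\Z/n)$ with $KH(-;\Z/n)$ in the range $i<-d$, reducing (3) to (1). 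I expect the main obstacle to be Step 1: transporting the scheme-theoretic descent criterion to stacks demands a robust equivariant blow-up formula and a close analysis of how the resolution property, affineness of the diagonal, and linear reductivity of the stabilizers interact with blow-ups --- and it is there, together with the cohomological-dimension bound of Step 2, that the hypotheses defining $\mathbf{Stk}'$ and the choice of blow-up dimension do their work.
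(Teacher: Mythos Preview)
Your overall architecture matches the paper's at the level of Step~1 (cdh-descent for $KH$), but Steps~2 and~3 diverge from the paper in ways that introduce genuine gaps.

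\textbf{Step 2 is the main problem.} You propose a descent spectral sequence together with (a) a bound on the cdh-cohomological dimension of a stack by its blow-up dimension and (b) connectivity of $KH$ on cdh-local objects. Neither of these is established for stacks in $\mathbf{Stk}'$, and it is unclear what the cdh-points of such a stack even are; the cdh topology here is generated by Nisnevich squares and abstract blow-up squares with \emph{representable} proper maps, and there is no analogue of ``henselian valuation ring'' available. The paper avoids this entirely. Instead it proves a \emph{Killing Lemma} (\propref{prop:Flatification}): given $\xi\in K_{-n}(\sY)$ with $\sY\to\sX$ smooth and $\sY$ perfect with the resolution property, Rydh's flatification by blow-ups produces a sequence of blow-ups $\sX'\to\sX$ with nowhere dense centers such that the pullback of $\xi$ dies. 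One then argues by induction on $\bldim(\sX)$: write $KH(\sX)=\hocolim_n F_n(\sX)$ with $F_n$ the $n$-truncated simplicial colimit, use the Killing Lemma to annihilate a class after a blow-up, and use cdh-descent plus the induction hypothesis on the exceptional locus (which has strictly smaller blow-up dimension, by design of \defref{defn:cov-dim}) to conclude. This is the Kerz--Strunk argument verbatim; no cohomological-dimension bound for the cdh site of stacks is ever needed. For the finite-inertia case the paper passes to the coarse moduli space $X$, uses Nisnevich descent over $X$, and reduces via~\thmref{thm:Nis-local-aff} to the resolution-property case on the henselian local stacks $\sX^h_y$, which are quotients $[U/G]$ with $U$ affine.

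\textbf{Step 3 is overcomplicated and the proposed mechanism is unproven for stacks.} You invoke pro-cdh descent \cite{KST} and Gabber--Suslin rigidity to control $\fib(K\to KH)$ in a range. The paper does something much simpler and stronger: from the K\"unneth-type equivalence $\mathsf D_{\perf}(\sX\times\A^1)\simeq\mathsf D_{\perf}(\sX)\otimes\mathsf D_{\perf}(\A^1)$ one gets, by Tabuada's theorem, that the canonical maps $K^B(\sX)[1/n]\to KH(\sX)[1/n]$ (for $n$ nilpotent on $\sX$) and $K^B(\sX,\Z/n)\to KH(\sX,\Z/n)$ (for $n$ invertible on $\sX$) are \emph{equivalences of spectra}, not merely isomorphisms in some range. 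Parts (2) and (3) then follow immediately from part (1); no further descent or rigidity argument is needed.
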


\subsection{Cdh-descent for the homotopy $K$-theory of stacks}\label{sec:cdh-DM*}
Cdh-descent plays a key role in all the existing
vanishing theorems for negative $K$-theory. In the recent proof of Weibel's conjecture
in \cite{KST}, the central result is pro-cdh-descent for non-connective
algebraic $K$-theory.
Earlier results towards Weibel's conjecture used instead
cdh-descent for homotopy $K$-theory $KH$.
For schemes over a field of characteristic zero, this descent result
was proven by Haesemeyer \cite{Haes}, and in arbitrary characteristic,
it was shown by Cisinski \cite{Cisinski}. For the equivariant $KH$-theory
of quasi-projective schemes acted on by a diagonalizable or finite linearly reductive group
over an arbitrary base, cdh-descent was proven by the first author \cite{Hoyois-2}. 
A key step in the proof of~\thmref{thm:Main-1} is a generalization of
the latter to more general algebraic stacks:

\begin{thm}[See~\thmref{thm:cdh-descent}]
	\label{thm:Main-2}
The presheaf of homotopy $K$-theory spectra $KH$
satisfies cdh-descent on the category $\mathbf{Stk}'$.
\end{thm}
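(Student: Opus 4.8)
The plan is to establish cdh-descent for $KH$ on $\mathbf{Stk}'$ by reducing to the already-known case of equivariant $KH$-theory of quasi-projective schemes with diagonalizable or finite linearly reductive group actions, treated by the first author in \cite{Hoyois-2}, and then bootstrapping via Nisnevich-local and structural properties of the stacks in $\mathbf{Stk}'$. First I would recall that $KH$ is, by construction, an $\A^1$-invariant and Nisnevich-local invariant of stacks, so it suffices to verify that $KH$ sends abstract blow-up squares (the generators of the cdh-topology, i.e.\ squares given by a closed immersion $Z \hookrightarrow \sX$ together with a proper morphism $\sX' \to \sX$ which is an isomorphism away from $Z$) to homotopy Cartesian squares. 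Since cdh-descent is equivalent to Nisnevich descent plus the blow-up-square condition, and Nisnevich descent for $KH$ on such stacks is already available from \cite{KR-2} (localization, excision, Mayer--Vietoris), the whole content is in the abstract blow-up squares.

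The key steps, in order, would be: (1) Use the structure theory of stacks in $\mathbf{Stk}'$ — in particular, the existence of coarse moduli spaces for tame stacks and the local quotient description — to work Nisnevich-locally on the coarse space, reducing an arbitrary blow-up square to one over a stack that is (Nisnevich-locally) a quotient $[U/G]$ with $U$ quasi-projective and $G$ either diagonalizable or finite linearly reductive. This is where the hypothesis ``satisfying the resolution property or having finite inertia'' would be used to guarantee such a local presentation exists; for finite-inertia stacks one passes to the coarse moduli space and uses that tameness makes $K$-theory of the stack computed by invariants. (2) For such a local quotient presentation, identify the abstract blow-up square in $\mathbf{Stk}'$ with an equivariant abstract blow-up square of quasi-projective $G$-schemes, noting that equivariant blow-ups along $G$-invariant centers are still quasi-projective and that properness/closed-immersion conditions are inherited. (3) Invoke \cite{Hoyois-2} directly to conclude that equivariant $KH$ sends this square to a homotopy Cartesian square. (4) Reassemble: descent is a local condition, so once the blow-up-square condition holds Nisnevich-locally it holds globally, and combined with Nisnevich descent this yields cdh-descent.

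The hard part will be Step~(1): producing, Nisnevich-locally, a presentation of an arbitrary stack in $\mathbf{Stk}'$ — and crucially of the members of a given blow-up square \emph{simultaneously and compatibly} — as quotients $[U/G]$ with $U$ quasi-projective and $G$ of the required type. The subtlety is that $\sX' \to \sX$ must be handled compatibly with $Z \hookrightarrow \sX$: after Nisnevich-localizing on the coarse moduli space of $\sX$ one needs the induced squares on $\sX'$, $Z$, and $\sX' \times_{\sX} Z$ to simultaneously acquire such presentations, with the group $G$ unchanged. For the finite-stabilizer case this should follow from the local structure theorem for tame Artin stacks of \cite{AOV} (étale-locally, hence Nisnevich-locally after a further refinement, the stack is $[V/G]$ for $G$ the stabilizer, which is linearly reductive finite), while the multiplicative-type case requires the analogous local structure result allowing extensions by tori, together with the observation that $KH$ of a gerbe banded by a group of multiplicative type is controlled by the character decomposition. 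Once this local presentation is in place, Steps~(2)--(4) are essentially formal, relying only on the already-cited descent statements and the quasi-projectivity being preserved under the relevant operations.
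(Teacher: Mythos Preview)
Your outline has the right broad shape---reduce Nisnevich-locally to a quotient stack and invoke \cite{Hoyois-2}---but there are two genuine gaps.

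First, and most seriously, you never explain how to pass from a \emph{proper} representable morphism $p:\sY\to\sX$ to a \emph{projective} one. An abstract blow-up square only requires $p$ to be proper and an isomorphism off $\sZ$; it is not literally a blow-up, and there is no reason the map becomes quasi-projective after Nisnevich localization. Your Step~(2) silently assumes that once $\sX=[U/G]$ with $U$ quasi-projective, the map $\sY\to\sX$ is identified with an equivariant map of quasi-projective $G$-schemes, but properness of $p$ does not imply this. The results of \cite{Hoyois-2} apply to (quasi-)projective morphisms, so this reduction is essential. The paper handles it as a separate final step: using Rydh's flatification theorem one produces a projective morphism $\sY'\to\sY$ which is an isomorphism over $\sX\setminus\sZ$ and such that the composite $\sY'\to\sX$ is also projective (\corref{cor:birational-Chow}); a two-out-of-three argument on the resulting pair of squares then reduces the general proper case to the projective one. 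Without this Chow-type lemma your argument does not close.

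Second, you have imported the wrong hypotheses. The cdh-descent theorem holds for \emph{all} stacks in $\mathbf{Stk}'$; the extra assumptions ``resolution property or finite inertia'' belong to the vanishing theorem, not here. In particular, stacks in $\mathbf{Stk}'$ with affine diagonal and almost multiplicative stabilizers need not have finite inertia and need not admit coarse moduli spaces, so your Step~(1) as written does not cover them. The correct local input is the Alper--Hall--Rydh structure theorem (\thmref{thm:Nis-local-aff}), which gives a Nisnevich cover of any $\sX\in\mathbf{Stk}'$ by stacks of the form $[U/G]$ with $U$ affine over an affine base and $G$ linearly reductive almost multiplicative; no passage to a coarse space is needed. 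A smaller but related point: to invoke \cite{Hoyois-2} you must check that the $KH$ defined there agrees with the one used here, which amounts to proving invariance of $KH$ under vector bundle \emph{torsors} on such quotient stacks (\lemref{lem:torsor-inv}); this is not automatic from $\A^1$-invariance.
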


Cdh-descent is the combination of two descent properties:
descent for the Nisnevich topology and descent for abstract blow-ups. 
Descent for the Nisnevich topology holds
much more generally (see \corref{cor:Nis-descent-KH}) and in fact it holds
for non-connective $K$-theory as well (see \corref{cor:Nis-descent-KB}).
Descent for abstract blow-ups is more difficult and uses several non-trivial properties
of the category $\mathbf{Stk}'$. The proof ultimately relies
on the proper base change theorem in stable equivariant motivic homotopy theory,
proved in \cite{Hoyois-1}.

\section{Preliminaries on algebraic stacks}
\label{sec:stacks}

A {\sl stack} in this text will mean a quasi-compact and quasi-separated
algebraic stack. 
Note that all morphisms between such stacks are quasi-compact and quasi-separated.
Similarly, algebraic spaces and schemes are always assumed
to be quasi-compact and quasi-separated.
We will say that a morphism of stacks is {\sl representable} (resp.\ {\sl schematic})
if it is representable by algebraic spaces (resp.\ by schemes).
Recall that the diagonal of a stack is
representable by definition (see \cite[Tag 026N]{SP}).
If $\sX$ is a stack, $k$ is a field, and $x:\Spec(k)\to\sX$ is a $k$-point, the
stabilizer $G_x\to\Spec(k)$ is a flat separated group scheme of finite type (see \cite[Tag 0B8D]{SP}).

All group schemes will be assumed flat and finitely presented.
With this convention, if $G$ is a group scheme over a scheme $S$, then $\sB G=[S/G]$ is a stack.
Recall that $G$ is called {\sl linearly reductive} if the pushforward functor
$\QC({\sB G}) \to \QC(S)$ on quasi-coherent sheaves is exact.
One knows from \cite[Theorem~2.16]{AOV} that a finite étale group scheme $G$ over $S$ is linearly reductive
if and only if its degree at each point of $S$ is prime to the residual characteristic.
Diagonalizable group schemes are also linearly reductive by \cite[Exp. I, Th. 5.3.3]{SGA3}. As linear reductivity
 is an fpqc-local property on $S$ \cite[Proposition 2.4]{AOV}, every group scheme of multiplicative type is linearly reductive.
 
 We will say that a group scheme $G$ is {\sl almost multiplicative} if it is an
 extension of a finite étale group scheme by a group scheme of multiplicative type.
 Since the class of linearly reductive group schemes is closed under quotients and extensions \cite[Proposition 12.17]{Alper},
 an almost multiplicative group scheme $G$ over $S$ is linearly reductive if and only if,
 for every $s\in S$, the number of geometric components of $G_s$ is invertible in $\kappa(s)$.

\subsection{Quasi-projective morphisms}

Recall from \cite[\S14.3]{LM} that if $\sX$ is a stack and if $\sA^{\bullet}$ is
a quasi-coherent sheaf of graded $\sO_{\sX}$-algebras, then 
$\Proj(\sA^{\bullet})$ is a local construction on the fppf site of
$\sX$ just like for schemes and hence defines a schematic
morphism of stacks $q:\Proj(\sA^{\bullet}) \to \sX$.
A morphism $f: \sY \to \sX$ is called {\sl quasi-projective} (see \cite[\S14.3.4]{LM} and \cite[Theorem 8.6]{Rydh-approx})
if there is a finitely generated quasi-coherent
sheaf $\sE$ on $\sX$ and a factorization 
$$\sY \stackrel{\iota}{\inj} \P(\sE) \xrightarrow{q} \sX$$
of $f$, where $\P(\sE)=\Proj({\rm Sym}^{\bullet}(\sE))$ 
and $\iota$ is a quasi-compact immersion.
We say that $f$ is {\sl projective} if it is quasi-projective and proper. It is clear that a quasi-projective
morphism of stacks is schematic and hence representable.

\begin{lem}\label{lem:qproj-composition}
	If $f:\sX\to\sY$ and $g:\sY\to\sZ$ are quasi-projective (resp.\ projective) morphisms of stacks,
	then $g\circ f$ is quasi-projective (resp.\ projective).
\end{lem}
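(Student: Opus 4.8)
The plan is to reduce the quasi-projective case to the analogous statement for $\P$-bundles, i.e.\ to produce from the two given factorizations through projective bundles a single factorization of $g\circ f$ through one projective bundle over $\sZ$. First I would unwind the definitions: write $f:\sX\inj\P(\sE)\xrightarrow{q}\sY$ with $\sE$ a finitely generated quasi-coherent sheaf on $\sY$ and $\iota_f$ a quasi-compact immersion, and $g:\sY\inj\P(\sF)\xrightarrow{p}\sZ$ with $\sF$ finitely generated quasi-coherent on $\sZ$ and $\iota_g$ a quasi-compact immersion. The first step is to pull back the bundle $\P(\sE)$ along $\iota_g$: since $\Proj$ is a local construction on the fppf site (as recalled from \cite[\S14.3]{LM}), we have $\P(\sE)\cong \P(\iota_g^*\sE')\times_{\P(\sF)}\sY$ for some finitely generated quasi-coherent $\sE'$ on $\P(\sF)$ extending $\sE$ — more precisely, $\iota_g$ being a quasi-compact immersion into a Noetherian-type situation, one can extend the finitely generated sheaf $\sE$ from $\sY$ to a finitely generated quasi-coherent sheaf on $\P(\sF)$ (this uses that $\P(\sF)\to\sZ$ is schematic and that quasi-coherent sheaves on such stacks are filtered colimits of finitely generated ones, together with approximation as in \cite[Theorem 8.6]{Rydh-approx}). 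Then I would invoke the classical Segre-type construction: a projective bundle over a projective bundle, $\P(\sE')\to\P(\sF)\to\sZ$, admits a closed immersion into a single projective bundle $\P(\sG)\to\sZ$ over $\sZ$ (where $\sG$ is built from symmetric/tensor powers of $\sE'$ and $\sF$), and since $\Proj$ is fppf-local, this construction — being valid for schemes — descends to stacks.

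The second step is to chase the composite. We obtain
$$\sX\xrightarrow{\iota_f}\P(\sE)\cong \P(\sE')\times_{\P(\sF)}\sY \xrightarrow{\id\times\iota_g}\P(\sE')\times_{\P(\sF)}\P(\sF)=\P(\sE')\inj\P(\sG)\xrightarrow{}\sZ,$$
and I would check that each arrow except the last is a quasi-compact immersion: $\iota_f$ is one by hypothesis; the base change of $\iota_g$ along the schematic morphism $\P(\sE')\to\P(\sF)$ is again a quasi-compact immersion (immersions and quasi-compactness are stable under base change); and the Segre embedding $\P(\sE')\inj\P(\sG)$ is a closed immersion. Since quasi-compact immersions are closed under composition, the total composite $\sX\inj\P(\sG)$ is a quasi-compact immersion, and $\P(\sG)\to\sZ$ has the required form $\P(\Sym^\bullet\sG)$. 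This gives quasi-projectivity of $g\circ f$.

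For the projective case I would add the observation that properness is stable under composition (standard for representable morphisms of stacks, checked fppf-locally on the base), so if $f$ and $g$ are moreover proper then so is $g\circ f$; combined with the quasi-projectivity just established, $g\circ f$ is projective.

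I expect the main obstacle to be purely bookkeeping around the extension step: producing the finitely generated quasi-coherent sheaf $\sE'$ on $\P(\sF)$ that restricts to $\sE$ on $\sY\subset\P(\sF)$. For schemes this is standard (extend a coherent sheaf across an open or closed subscheme), but here $\sY\inj\P(\sF)$ is only a quasi-compact immersion into a stack, so one must invoke the approximation results for algebraic stacks \cite{Rydh-approx} — specifically that every quasi-coherent sheaf is a filtered colimit of finitely generated ones and that finitely generated sheaves extend along quasi-compact immersions — rather than a naive pushforward. Everything else (fppf-locality of $\Proj$, stability of immersions and properness under base change and composition) either reduces immediately to the scheme case or is already available in the references cited in the excerpt.
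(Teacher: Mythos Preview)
Your plan follows essentially the same strategy as the paper's proof, which simply cites \cite[Lemma 2.13]{Hoyois-1} and notes that the key input is that every quasi-coherent sheaf on a qcqs stack is a filtered colimit of its finitely generated quasi-coherent subsheaves \cite{Rydh-approx}. You have correctly identified this approximation result as the crux of the argument.

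One point deserves more care. In your Segre step you write that $\sG$ is ``built from symmetric/tensor powers of $\sE'$ and $\sF$'', but $\sE'$ lives on $\P(\sF)$, not on $\sZ$, so no such direct tensor construction on $\sZ$ is available; the classical Segre embedding applies to a \emph{product} of projective bundles over the same base. What one actually does is use relative ampleness of $\sO_\sF(1)$ over $\sZ$: for $n\gg 0$ the twist $\sE'(n)$ is a quotient of $p^*p_*\sE'(n)$, and then the approximation result is invoked a \emph{second} time to replace the (possibly not finitely generated) pushforward $p_*\sE'(n)$ by a finitely generated subsheaf $\sH$ on $\sZ$ that still surjects after pullback. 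This yields a closed immersion $\P(\sE')\inj \P(p^*\sH)=\P(\sH)\times_\sZ\P(\sF)$, and only now does the genuine Segre embedding $\P(\sH)\times_\sZ\P(\sF)\inj\P(\sH\otimes\sF)$ apply. So the approximation result enters twice, not once; this is precisely why the paper singles it out as ``the key point''. With that correction your outline is complete.
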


\begin{proof}
The proof is the same as \cite[Lemma 2.13]{Hoyois-1}, 
the key point being that every quasi-coherent
sheaf on a quasi-compact quasi-separated stack is the 
colimit of its finitely generated quasi-coherent subsheaves
\cite{Rydh-approx}.
\end{proof}

If $\sI \subset \sO_{\sX}$ is a finitely generated quasi-coherent sheaf of ideals, defining a
finitely presented closed substack $\sZ \subset \sX$, then 
$\Proj(\oplus_{i \ge 0} \sI^i) = {\rm Bl}_{\sZ}(\sX)$ is called the
blow-up of $\sX$ with center $\sZ$. 
Note that ${\rm Bl}_\sZ(\sX)$ is a closed substack of $\P(\sI)$.
Since $\sI$ is finitely generated, it follows that the structure map ${\rm Bl}_{\sZ}(\sX)\to\sX$ is projective.
 If $\sU\subset\sX$ is an open substack, we say that a blow-up of $\sX$
 is {\sl $\sU$-admissible} if its center is disjoint from $\sU$.

\subsection{Flatification by blow-ups}

\begin{thm}[Rydh]\label{thm:Rydh}
Let $\sS$ be a quasi-compact and quasi-separated algebraic stack and let 
$f: \sX \to \sS$ be a morphism of finite type.
Let $\sF$ be a finitely generated quasi-coherent $\sO_{\sX}$-module. Let
$\sU \subseteq \sS$ be an open substack such that $f|_{\sU}$ is of finite
presentation and $\sF |_{f^{-1}(\sU)}$ is
of finite presentation and flat over $\sU$. Then there exists a sequence of
$\sU$-admissible blow-ups $\wt{\sS} \to \sS$ such that the
strict transform of $\sF$ is of finite presentation and flat over $\wt{\sS}$.
\end{thm}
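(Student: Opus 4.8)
The plan is to deduce the theorem from the classical Raynaud--Gruson flatification theorem for Noetherian algebraic spaces, by a two-stage reduction: first to the case in which $\sS$ is Noetherian, and then from the stack $\sS$ down to a smooth atlas. All the reductions are powered by the same three compatibilities: blow-ups (being instances of $\Proj$, which is fppf-local) and strict transforms commute with flat base change, and flatness and finite presentation of a quasi-coherent module can be tested fppf-locally on the base.

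For the first stage I would write $\sS$ as a cofiltered limit $\sS=\lim_{\lambda}\sS_{\lambda}$ of Noetherian algebraic stacks with affine transition morphisms, using Rydh's Noetherian approximation \cite{Rydh-approx}. Since $f$ is of finite type and $\sF$ finitely generated, the datum $(f\colon\sX\to\sS,\ \sF)$ descends to some $(f_{\lambda}\colon\sX_{\lambda}\to\sS_{\lambda},\ \sF_{\lambda})$, and after enlarging $\lambda$ I can arrange that $\sU$ descends to an open $\sU_{\lambda}\subseteq\sS_{\lambda}$ over which $f_{\lambda}$ is of finite presentation and $\sF_{\lambda}$ is of finite presentation and flat. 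Granting the Noetherian case for $(\sS_{\lambda},f_{\lambda},\sF_{\lambda},\sU_{\lambda})$, I pull the resulting sequence of $\sU_{\lambda}$-admissible blow-ups back along the flat morphism $\sS\to\sS_{\lambda}$; by the compatibilities above, this is a sequence of $\sU$-admissible blow-ups of $\sS$ that does the job. So I may assume $\sS$ is Noetherian, in which case $f$ is automatically of finite presentation and $\sF$ of finite presentation, and $\sU$ is simply the locus over which $\sF$ is already flat, which the admissible blow-ups must leave untouched.

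For the second stage I would choose a smooth surjection $p\colon S\to\sS$ with $S$ a Noetherian scheme and pull $\sX$, $\sF$, $\sU$ back to $X$, $F$, $U$ over $S$. Raynaud--Gruson then produces a sequence of $U$-admissible blow-ups $\wt{S}\to S$ after which the strict transform of $F$ is flat and of finite presentation over $\wt{S}$. The essential point is that this sequence can be produced \emph{functorially under flat base change}: the successive centers are cut out by ideals manufactured from $\sF$ by operations---Fitting ideals, annihilators, and the flattening stratification---that all commute with flat base change. Feeding the two projections $S\times_{\sS}S\rightrightarrows S$ into this canonical recipe equips each center, hence each intermediate blow-up, with descent data for the fppf cover $p$; the centers therefore descend to closed substacks of the corresponding blow-ups of $\sS$, yielding a sequence of $\sU$-admissible blow-ups $\wt{\sS}\to\sS$ with $\wt{\sS}\times_{\sS}S\cong\wt{S}$. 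Since the strict transform of $\sF$ over $\wt{\sS}$ pulls back along $p$ to the strict transform of $F$ over $\wt{S}$, which is flat and of finite presentation, and $\wt{S}\to\wt{\sS}$ is smooth and surjective, flatness and finite presentation descend, which finishes the proof.

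The hard part is the functoriality claimed in the second stage: one must inspect the Raynaud--Gruson d\'evissage closely enough to verify that at each step the center may be chosen canonically with respect to flat base change, so that the whole blow-up sequence descends along the atlas. (One could instead bypass descent altogether and re-run the Raynaud--Gruson argument directly on the Noetherian stack $\sS$, replacing each scheme-theoretic input about quasi-coherent sheaves by its stacky analogue; this is correct but amounts to reproving a long theorem.) The first stage is then routine limit bookkeeping, its only substantive input being the availability of Noetherian approximation for the stacks at hand.
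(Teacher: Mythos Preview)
The paper does not give a proof of this statement: it simply cites \cite[Theorem~4.2]{Rydh-2}, so there is nothing to compare your argument against at the level of detail. That said, your sketch is a reasonable approximation of the strategy one expects Rydh to use, namely producing canonical centers that commute with flat base change and hence descend from a smooth atlas.

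Two remarks on the sketch itself. First, your Stage~1 is shakier than it looks: Noetherian approximation in the sense of writing $\sS=\lim_\lambda\sS_\lambda$ with $\sS_\lambda$ Noetherian and affine transition maps is \emph{not} known for an arbitrary quasi-compact quasi-separated algebraic stack; Rydh's approximation results in \cite{Rydh-approx2} require additional hypotheses such as quasi-finite separated diagonal. Fortunately this stage is also unnecessary: the Raynaud--Gruson theorem already holds for quasi-compact quasi-separated algebraic spaces (or schemes) without Noetherian hypotheses, so you can run Stage~2 directly on such an atlas. Second, your Stage~2 correctly isolates the real content---that the successive centers in the Raynaud--Gruson d\'evissage can be chosen canonically with respect to flat base change---but this is precisely the theorem you are trying to prove, and verifying it requires going through the d\'evissage in detail (Fitting ideals alone do not suffice; the full argument involves an induction on relative dimension and auxiliary projective bundles). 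So what you have written is an accurate outline of where the work lies rather than a proof, which is appropriate given that the paper itself defers to Rydh for the details.
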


\begin{proof}
	This is proved in \cite[Theorem 4.2]{Rydh-2}.
\end{proof}

\begin{lem}\label{lem:flat-birational}
	Let $f\colon\sY\to\sX$ be a flat, proper, finitely presented, representable and birational morphism of stacks.
	Then $f$ is an isomorphism.
\end{lem}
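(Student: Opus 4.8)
The plan is to reduce to the well-known statement for schemes (or algebraic spaces) via smooth descent, using that both hypotheses and conclusion are local on the base $\sX$ in the smooth (indeed fppf) topology. First I would choose a smooth surjection $U\to\sX$ from a scheme $U$; since the question of whether a representable morphism is an isomorphism can be checked after such a base change, it suffices to show that $\sY\times_\sX U\to U$ is an isomorphism. Now $\sY\times_\sX U$ is an algebraic space (because $f$ is representable), and the base-changed morphism is still flat, proper, finitely presented, and birational over the scheme $U$ — birationality being preserved because $U\to\sX$ is smooth, hence flat, so it pulls back the dense open over which $f$ is an isomorphism to a dense open of $U$ over which the base change is an isomorphism. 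Thus I am reduced to the case of a flat, proper, finitely presented, birational morphism $g\colon V\to U$ of algebraic spaces with $U$ a scheme.

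In this reduced case I would argue as follows. Since $g$ is birational there is a dense open $U_0\subseteq U$ over which $g$ is an isomorphism. The complement $Z=U\setminus U_0$ (with reduced structure) is nowhere dense. Because $g$ is flat and of finite presentation, it is open, and because it is proper it is closed; being birational it is surjective, and its fibers over $U_0$ are single reduced points. The key point is that a flat, proper, finitely presented morphism has fibers of locally constant dimension and the Euler characteristic of the fibers (or simply the function $u\mapsto \dim_{\kappa(u)} H^0(V_u,\sO_{V_u})$, using that $\sO_U\to g_*\sO_V$ is an isomorphism by properness plus flatness plus the Stein-type computation, or alternatively the length of the fiber for a quasi-finite flat map) is locally constant on $U$. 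Over the dense open $U_0$ this invariant equals that of a point, so by local constancy it equals that of a point everywhere; hence every fiber $V_u$ is a single reduced point, so $g$ is quasi-finite, proper, and flat of degree $1$, i.e. a finite flat morphism of degree $1$, i.e. an isomorphism.

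A cleaner way to organize the last step, which avoids delicate dimension-of-fiber bookkeeping, is: $g$ is proper and representable, so $g_*\sO_V$ is a coherent $\sO_U$-algebra, and $g$ factors through $\Spec_U(g_*\sO_V)\to U$; flatness of $g$ together with properness gives that $g_*\sO_V$ is a flat (hence, being finite, locally free) $\sO_U$-module whose rank is the locally constant function $u\mapsto \dim_{\kappa(u)}H^0(V_u,\sO_{V_u})$ — here one uses cohomology and base change. On the dense open $U_0$ this rank is $1$, so it is $1$ everywhere, whence $\sO_U\to g_*\sO_V$ is an isomorphism; combined with the fact that $g$ has $0$-dimensional fibers (again by local constancy of fiber dimension, known on $U_0$) this forces $g$ itself to be the Stein factorization, i.e. an isomorphism.

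The main obstacle is the invariance-of-the-rank step: one must know that for a flat proper finitely presented morphism the fiberwise quantities $\dim H^i(V_u,\sO_{V_u})$ — in particular the dimension of the fibers and $h^0$ — are locally constant on the base. For algebraic spaces this follows from cohomology and base change together with semicontinuity (e.g. \cite[Tag 0D4J]{SP} and the coherence/finiteness results for proper morphisms of algebraic spaces), but one should be slightly careful that $U$ is merely quasi-compact and quasi-separated, not Noetherian; finite presentation of $g$ is exactly what allows the standard Noetherian approximation argument to transport these facts from the Noetherian case. Everything else — the smooth-descent reduction, preservation of birationality under flat base change, and the identification ``finite flat of degree one $\Rightarrow$ isomorphism'' — is routine.
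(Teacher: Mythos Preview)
Your proposal is correct and follows essentially the same route as the paper: reduce to algebraic spaces by smooth descent, use that a flat proper finitely presented morphism has fibers of locally constant dimension to conclude quasi-finiteness, then use that a finite flat finitely presented morphism is locally free of locally constant rank, which must be $1$ by birationality. The paper is terser---it invokes Zariski's Main Theorem explicitly to pass from quasi-finite proper to finite, and cites \cite[Tag 0D4R]{SP} for the fiber-dimension step---while you spell out the descent reduction (correctly: smooth surjections are open, hence pull back dense opens to dense opens) and offer an alternative Stein-factorization packaging; but the substance is the same.
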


\begin{proof}
	We can assume that $\sX$ and hence $\sY$ are algebraic spaces. 
	Since $f$ is flat, proper, and finitely presented, its fibers have locally constant dimension \cite[Tag 0D4R]{SP}.
	Since $f$ is birational, its fibers must have dimension $0$, so $f$ is quasi-finite \cite[Tag 04NV]{SP}.
	By Zariski's Main Theorem \cite[Tag 082K]{SP}, we deduce that $f$ is in fact finite.
	Being finite, flat, and finitely presented, $f$ is locally free, and it must be of rank $0$.
\end{proof}

\begin{cor}[Rydh]\label{cor:birational-Chow}
Let $f:\sY\to\sX$ be a proper representable morphism of stacks
that is an isomorphism over some quasi-compact open substack $\sU\subset\sX$.
 Then there exists a projective morphism $g:\tilde\sY\to\sY$ that is an isomorphism over $\sU$
 such that $f\circ g$ is also projective.
\end{cor}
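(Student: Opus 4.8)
The plan is to read this as a form of Chow's lemma and to build $g$ from a flattening blow-up of $\sX$. Since $f$ is proper, the composite $f\circ g$ is automatically proper as soon as $g$ is, so it is enough to produce a projective $g\colon\wt{\sY}\to\sY$ which is an isomorphism over $\sU$ and for which $f\circ g$ is merely quasi-projective. To do this I would apply \thmref{thm:Rydh} to $f\colon\sY\to\sX$ with $\cF=\cO_\sY$ and the open substack $\sU$; the hypotheses hold because $f$ is an isomorphism over $\sU$, so $f|_{f^{-1}(\sU)}$ and $\cO_{f^{-1}(\sU)}$ are of finite presentation and flat over $\sU$. This yields a sequence of $\sU$-admissible blow-ups $p\colon\wt{\sX}\to\sX$, which is projective by \lemref{lem:qproj-composition}, such that the strict transform $q\colon\sY'\to\wt{\sX}$ of $\sY$ is flat and of finite presentation, where $\sY'$ is a closed substack of $\sY\times_\sX\wt{\sX}$.

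The projection $g\colon\sY'\inj\sY\times_\sX\wt{\sX}\to\sY$ is then projective, being a closed immersion followed by the base change of the projective morphism $p$ (apply \lemref{lem:qproj-composition}); and since each blow-up is an isomorphism over $\sU$, both $g$ and $q$ are isomorphisms over $\sU$. As $p$ is projective, \lemref{lem:qproj-composition} together with properness of $f\circ g$ reduces the statement to the following: arrange that the flat, proper, finitely presented, representable morphism $q\colon\sY'\to\wt{\sX}$ becomes quasi-projective after a blow-up of $\sY'$ that is an isomorphism over the preimage $\wt{\sU}$ of $\sU$. When $\sU$ is schematically dense in $\sX$ this is immediate: the blow-up centers avoid $\sU$, hence the associated points of $\sX$, so $\wt{\sU}$ is dense in $\wt{\sX}$, the morphism $q$ is birational, and \lemref{lem:flat-birational} shows that $q$ is already an isomorphism — whence $\sY'\to\sX$ is identified with $p$, which is projective, and we are done with $\wt{\sY}:=\sY'$.

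For general $\sU$ one finishes by applying Chow's lemma for representable proper morphisms of algebraic stacks (in the spirit of Rydh's results) to $q\colon\sY'\to\wt{\sX}$: it should provide a projective morphism $\sY''\to\sY'$ which is an isomorphism over the locus where $q$ is quasi-projective — in particular over $\wt{\sU}\cong\sU$ — and such that $\sY''\to\wt{\sX}$ is quasi-projective; then $\wt{\sY}:=\sY''$ with $g$ replaced by the composite $\sY''\to\sY'\to\sY$ does the job, since $\sY''\to\sY$ is projective and $\sY''\to\sX$ is quasi-projective, hence projective, by \lemref{lem:qproj-composition} and properness. I expect the main obstacle to be precisely this last input — establishing Chow's lemma for representable morphisms of algebraic stacks with the needed control over the locus $\wt{\sU}$ — together with the more routine bookkeeping of scheme-theoretic versus merely topological density required to legitimately invoke \lemref{lem:flat-birational}; the flattening and birationality steps themselves are formal consequences of \thmref{thm:Rydh}, \lemref{lem:flat-birational} and \lemref{lem:qproj-composition}.
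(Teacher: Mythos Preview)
Your argument in the dense case is exactly the paper's: apply \thmref{thm:Rydh} to flatten the strict transform, then use \lemref{lem:flat-birational} to see that the flattened map is an isomorphism, so $f\circ g$ coincides with the composition of blow-ups $\wt{\sX}\to\sX$, which is projective by \lemref{lem:qproj-composition}.

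The gap is in your treatment of the general case. You propose to finish by invoking ``Chow's lemma for representable proper morphisms of algebraic stacks'' applied to $q\colon\sY'\to\wt{\sX}$, but that is precisely the content of \corref{cor:birational-Chow} itself --- you would be assuming what you are trying to prove. The paper closes this gap with a one-line reduction you overlooked: before doing anything else, blow up $\sX$ along a finitely presented closed complement of $\sU$ (such a complement exists by \cite[Proposition~8.2]{Rydh-approx}) and replace $\sY$ by its strict transform. After this preliminary blow-up, $\sU$ is dense in the new base, and your dense-case argument then finishes the proof completely, with no further appeal to Chow's lemma. The preliminary blow-up and the passage to the strict transform are themselves projective and isomorphisms over $\sU$, so they can be absorbed into $g$ at the end via \lemref{lem:qproj-composition}. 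This also dissolves your worry about ``scheme-theoretic versus topological density'': once you have forced $\sU$ to be dense by the initial blow-up, the $\sU$-admissible blow-ups produced by \thmref{thm:Rydh} keep it dense, and \lemref{lem:flat-birational} applies directly.
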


\begin{proof}
By first blowing up a finitely presented complement of $\sU$ in $\sX$ 
(which exists by \cite[Proposition 8.2]{Rydh-approx}) and replacing $\sY$ by its strict transform,
we may assume that $\sU$ is dense in $\sX$.
By~\thmref{thm:Rydh}, we can find a sequence of $\sU$-admissible blow-ups $\tilde\sX\to\sX$ 
such that the strict transform $\tilde f:\tilde\sY\to\tilde\sX$ is flat and of finite presentation.
Let $g:\tilde\sY\to\sY$ be the induced map:
 \begin{equation*}
 \xymatrix{
 \sU \ar@{^{(}->}[r] \ar@{=}[d] & \tilde\sY \ar[d]^{\tilde f} \ar[r]^g & \sY \ar[d]^{f} \\  
 \sU \ar@{^{(}->}[r]  & \tilde\sX \ar[r] & \sX.}
 \end{equation*}
 Then $g$ is a sequence of $\sU$-admissible blow-ups and hence
 it is projective by \lemref{lem:qproj-composition}.
 Moreover, $\tilde f$ is flat, proper, finitely presented, representable, and birational, whence
 an isomorphism (\lemref{lem:flat-birational}).
 Thus, $f\circ g$ is the composition of an isomorphism and the sequence of blow-ups
 $\tilde\sX\to\sX$, so it is projective by~\lemref{lem:qproj-composition}.
\end{proof}

\subsection{Nisnevich coverings of stacks}\label{sec:Nisnevich}

The following definition appears in \cite[Definition 3.1]{HR-3} and,
 for Deligne--Mumford stacks, in \cite[Definition 6.3]{KO}.

\begin{defn}\label{defn:Nis-top}
	Let $\sX$ be a stack. A family of étale morphisms $\{\sU_i\to\sX\}_{i\in I}$
	is called a {\sl Nisnevich covering} if, for every $x\in\sX$, there exists $i\in I$
	and $u\in\sU_i$ above $x$ such that the induced morphism of residual gerbes
	$\eta_u\to\eta_x$ is an isomorphism.
\end{defn}

Let $f: \sY\to\sX$ be a morphism of stacks. A {\sl monomorphic splitting sequence}
for $f$ is a sequence of quasi-compact open substacks
\[
\emptyset=\sU_0\subset\sU_1\subset\dotsb\subset\sU_n=\sX
\]
such that $f$ admits a monomorphic section over the reduced substack $\sU_i\setminus\sU_{i-1}$ for all $i$.
Note that if $f$ is étale, such a section is an open immersion $\sU_i\setminus\sU_{i-1}\inj\sY\times_\sX(\sU_i\setminus\sU_{i-1})$.

\begin{prop}\label{prop:splitting-sequence}
	Let $\sX$ be a stack. A family of étale morphisms $\{\sU_i\to\sX\}_i$ is a Nisnevich covering
	if and only if the morphism $\coprod_i\sU_i\to\sX$ admits a monomorphic splitting sequence.
\end{prop}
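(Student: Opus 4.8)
The plan is to prove both implications directly from \defref{defn:Nis-top}, working with the single étale morphism $f\colon\coprod_i\sU_i\to\sX$; write $\sU=\coprod_i\sU_i$.

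First I would treat the easy direction: suppose $f$ admits a monomorphic splitting sequence $\emptyset=\sU_0\subset\dotsb\subset\sU_n=\sX$. Given $x\in\sX$, let $j$ be minimal with $x\in\sU_j$, so $x$ lies in the reduced locally closed substack $\sV_j:=(\sU_j\setminus\sU_{j-1})_{\mathrm{red}}$. The monomorphic section $s\colon\sV_j\to\sU\times_\sX\sV_j$ is an open immersion (as $f$ is étale), hence composing with the projection gives an étale monomorphism $\sV_j\to\sU$ lying over the inclusion $\sV_j\inj\sX$. An étale monomorphism is an open immersion, and an open immersion induces isomorphisms on residual gerbes. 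Thus for the point $u\in\sU$ corresponding to the image of $x$ under $s$, the map of residual gerbes $\eta_u\to\eta_x$ factors as $\eta_u\xrightarrow{\sim}\eta_{x\text{ in }\sV_j}\xrightarrow{\sim}\eta_x$, so $\{\sU_i\to\sX\}$ is a Nisnevich covering.

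For the converse, suppose $\{\sU_i\to\sX\}$ is a Nisnevich covering. The key point is to build the filtration by topological Noetherian-type induction on the underlying space $|\sX|$, which is quasi-compact and quasi-separated (in fact a spectral space), using the following local observation: if $x\in\sX$ admits $u\in\sU$ above it with $\eta_u\xrightarrow{\sim}\eta_x$, then by \cite[Tag 0B8D]{SP}-type arguments there is a quasi-compact open substack $\sW_x\subset\sX$ containing $x$ over which $f$ admits a monomorphic section; indeed, étale-locally near $u$ the map $f$ is representable, and the isomorphism of residual gerbes together with étaleness lets one spread the section out over an open neighborhood (this is the stacky analogue of the statement that a Nisnevich cover of a scheme admits sections over the members of a suitable open filtration). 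Taking such $\sW_x$ for a finite set of points $x$ hitting every generic point of every irreducible component of $|\sX|$, one obtains a quasi-compact open $\sU_{n-1}^c$-type piece; more precisely, set $\sU_{n-1}$ to be a quasi-compact open substack whose complement $\sX\setminus\sU_{n-1}$ is where no section yet exists, arranged so that $f$ admits a monomorphic section over the reduced complement. Then replace $\sX$ by $\sX\setminus\sU_{n-1}$ (which has strictly smaller underlying space, being a proper closed subset of a Noetherian-type space—here one invokes that $|\sX|$ is a spectral space and the process strictly decreases the number of irreducible components or dimension) and iterate; quasi-compactness of $|\sX|$ ensures termination after finitely many steps, yielding the monomorphic splitting sequence.

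The main obstacle will be the converse direction, specifically the spreading-out step: producing, from a single point $x$ with an isomorphism of residual gerbes $\eta_u\xrightarrow{\sim}\eta_x$, an honest quasi-compact open substack of $\sX$ over which $f$ acquires a monomorphic section. For schemes this is classical (a Nisnevich neighborhood), but for algebraic stacks one must control the residual gerbes and use that an étale morphism inducing an isomorphism on residual gerbes at a point is, after base change to a suitable open, a monomorphism—this is where one leans on the theory of residual gerbes and quasi-compactness/quasi-separatedness of the diagonal (so that the locus where the section is a monomorphism is open). I would isolate this as the crux and expect the cited references \cite{HR-3} and \cite{KO} to supply the precise local structure statement needed.
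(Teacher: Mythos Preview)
The paper does not prove this; it simply cites \cite[Proposition~3.3]{HR-3}. Your easy direction (splitting sequence implies Nisnevich covering) is fine.

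For the converse, there are two genuine gaps. First, the spreading-out claim is false as stated: an isomorphism of residual gerbes $\eta_u\xrightarrow{\sim}\eta_x$ at an arbitrary point $x$ does \emph{not} in general yield a monomorphic section of $f$ over a Zariski open neighborhood of $x$. Already for schemes, the squaring map $\G_m\to\G_m$ over an algebraically closed field has matching residue fields at every closed point yet admits no Zariski-local section anywhere. What \emph{is} true is that spreading out works when $x$ is a \emph{generic} point of $|\sX|$: the local ring is then a field, a section over that field exists by the Nisnevich hypothesis, and it extends to a section over an honest open by a limit argument. So the correct inductive step produces a section over a dense open and passes to the reduced closed complement, not a section over a neighborhood of an arbitrary point. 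Your description of the step (``complement \dots\ is where no section yet exists, arranged so that $f$ admits a monomorphic section over the reduced complement'') is self-contradictory as written and reflects this confusion.

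Second, your termination argument invokes ``strictly decreases the number of irreducible components or dimension'' of $|\sX|$, but the stacks in this paper are only assumed quasi-compact and quasi-separated, not Noetherian. The spectral space $|\sX|$ may have infinitely many irreducible components and no well-founded dimension function, so Noetherian induction on $|\sX|$ is unavailable and quasi-compactness alone does not force the process to terminate. Handling the general qcqs case requires a constructible-topology or Noetherian-approximation argument, which is precisely the non-trivial content of \cite{HR-3} that the paper is citing. As written, your sketch at best establishes the Noetherian case.
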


\begin{proof}
	See \cite[Proposition 3.3]{HR-3}.
\end{proof}

\begin{cor}\label{cor:Nis-qc}
	Let $\sX$ be a stack and let $\{\sU_i\to\sX\}_{i\in I}$ be a Nisnevich covering.
	Then there exists a finite subset $J\subset I$ such that
	$\{\sU_i\to\sX\}_{i\in J}$ is a Nisnevich covering.
\end{cor}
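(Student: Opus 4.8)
The plan is to bootstrap from Proposition~\ref{prop:splitting-sequence} together with the standing quasi-compactness hypotheses, since the real work is already packaged there.

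First I would apply Proposition~\ref{prop:splitting-sequence} to the étale morphism $f\colon\coprod_{i\in I}\sU_i\to\sX$: it admits a monomorphic splitting sequence $\emptyset=\sV_0\subset\sV_1\subset\dotsb\subset\sV_n=\sX$ by quasi-compact open substacks. By definition, for each $1\le j\le n$ the morphism $f$ has a monomorphic section over $Z_j:=(\sV_j\setminus\sV_{j-1})_{\red}$, and since $f$ is étale this section is an open immersion
\[
s_j\colon Z_j\inj\Big(\coprod_{i\in I}\sU_i\Big)\times_\sX Z_j=\coprod_{i\in I}(\sU_i\times_\sX Z_j).
\]

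Next I would invoke quasi-compactness. Each $Z_j$ is a closed substack of the quasi-compact stack $\sV_j$ (itself a quasi-compact open of the quasi-compact stack $\sX$), hence $Z_j$ is quasi-compact. Pulling the coproduct decomposition of the target of $s_j$ back along $s_j$ writes $Z_j$ as a disjoint union of open substacks indexed by $I$, so by quasi-compactness all but finitely many are empty; equivalently, there is a finite subset $J_j\subset I$ such that $s_j$ factors through the open substack $\coprod_{i\in J_j}(\sU_i\times_\sX Z_j)$.

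Finally, set $J:=\bigcup_{j=1}^n J_j$, which is finite. The sequence $\emptyset=\sV_0\subset\dotsb\subset\sV_n=\sX$ is then also a monomorphic splitting sequence for $\coprod_{i\in J}\sU_i\to\sX$, because over each $Z_j$ the section $s_j$ already takes values in $\coprod_{i\in J_j}(\sU_i\times_\sX Z_j)\subset\coprod_{i\in J}(\sU_i\times_\sX Z_j)$. By the converse direction of Proposition~\ref{prop:splitting-sequence}, $\{\sU_i\to\sX\}_{i\in J}$ is a Nisnevich covering. I do not expect any genuine obstacle here; the only point needing a word of justification is the quasi-compactness of the strata $Z_j$, which is precisely where the blanket convention that all stacks are quasi-compact gets used.
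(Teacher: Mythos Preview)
Your proof is correct and is exactly the argument the paper has in mind: the paper's own proof reads ``This follows at once from \propref{prop:splitting-sequence},'' and your write-up simply unpacks that sentence by using the splitting sequence together with quasi-compactness of the strata $Z_j$ to extract a finite $J$.
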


\begin{proof}
	This follows at once from~\propref{prop:splitting-sequence}.
\end{proof}

A {\sl Nisnevich square} in the
category of stacks is a Cartesian square of the form
\begin{equation}\label{eqn:Nis-square}
\xymatrix{
\sW \ar@{^{(}->}[r] \ar[d] & \sV \ar[d]^{f} \\  
\sU \ar@{^{(}->}[r]^{e} & \sX,}
\end{equation}
where $f$ is an {\'e}tale morphism (not necessarily representable)
and $e$ is an open immersion with reduced
complement $\sZ$ such that the induced map $\sZ \times_\sX \sV \to \sZ$ is an isomorphism.
 Nisnevich squares form a cd-structure on the category of stacks, in the sense of \cite{Voev}.

\begin{prop}\label{prop:Nis-cd}
Let $f\colon \sY\to\sX$ be a Nisnevich covering. Then there exist sequences of quasi-compact open substacks 
\[
\sY_1\subset\dotsb\subset\sY_n\subset\sY,\quad \emptyset=\sX_0\subset\sX_1\subset\dotsb\subset\sX_n=\sX,
\]
such that $f(\sY_i)\subset \sX_i$ and such that each square
\begin{equation*}
\xymatrix@C1pc{
\sX_{i-1}\times_\sX\sY_i \ar@{^{(}->}[r] \ar[d] & \sY_i \ar[d]^{f} \\  
\sX_{i-1} \ar@{^{(}->}[r] & \sX_i,}
\end{equation*}
is a Nisnevich square.
\end{prop}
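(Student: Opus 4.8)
The plan is to deduce this from the monomorphic splitting sequence provided by \propref{prop:splitting-sequence}, converting each stage of the splitting into a Nisnevich square by taking appropriate open substacks of $\sY$. First I would apply \propref{prop:splitting-sequence} to the Nisnevich covering $f\colon\sY\to\sX$ to obtain a chain $\emptyset=\sX_0\subset\sX_1\subset\dotsb\subset\sX_n=\sX$ of quasi-compact open substacks such that, writing $\sZ_i=(\sX_i\setminus\sX_{i-1})_{\red}$, the map $f$ admits a monomorphic section $s_i$ over $\sZ_i$. Since $f$ is étale, $s_i$ identifies $\sZ_i$ with an open substack of $\sY\times_\sX\sZ_i$, and in particular $s_i(\sZ_i)$ is a locally closed substack of $\sY$ lying over $\sX_i\setminus\sX_{i-1}$.

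The main construction is then to produce the open substacks $\sY_i\subset\sY$. For each $i$ I would take $\sY_i$ to be a quasi-compact open substack of $\sY$ whose intersection with $f^{-1}(\sX_i)$ contains $s_i(\sZ_i)$ as a closed substack and which maps into $\sX_i$; concretely, since $s_i(\sZ_i)$ is open in $f^{-1}(\sZ_i)$ and closed in $f^{-1}(\sX_i\setminus\sX_{i-1})$, one can first find an open $\sV_i\subset f^{-1}(\sX_i)$ with $\sV_i\cap f^{-1}(\sZ_i)=s_i(\sZ_i)$ (so that $\sV_i\to\sX_i$ is an isomorphism over $\sZ_i$, i.e. restricts to a Nisnevich square over the pair $\sX_{i-1}\subset\sX_i$), and then use quasi-compactness to shrink $\sV_i$ to a quasi-compact such open; set $\sY_i=\sV_i$, viewed inside $\sY$. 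By construction $f(\sY_i)\subset\sX_i$ and the square
\begin{equation*}
\xymatrix@C1pc{
\sX_{i-1}\times_\sX\sY_i \ar@{^{(}->}[r] \ar[d] & \sY_i \ar[d]^{f} \\
\sX_{i-1} \ar@{^{(}->}[r] & \sX_i}
\end{equation*}
has étale right vertical map and open immersion bottom map with reduced complement $\sZ_i$; the defining property $\sV_i\cap f^{-1}(\sZ_i)=s_i(\sZ_i)$, together with the fact that $s_i$ is a section of $f$, says exactly that $\sZ_i\times_{\sX_i}\sY_i\to\sZ_i$ is an isomorphism, so the square is Nisnevich. Finally I would replace $\sY_i$ by $\sY_1\cup\dotsb\cup\sY_i$ to arrange the chain $\sY_1\subset\dotsb\subset\sY_n$; this union is still quasi-compact open in $\sY$, still maps into $\sX_i$, and the Nisnevich square property for the pair $(\sX_{i-1},\sX_i)$ is preserved because over $\sZ_i$ the extra pieces $\sY_j$ for $j<i$ lie over $\sX_{i-1}$ and hence contribute nothing to the fiber over $\sZ_i$.

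The step I expect to be the main obstacle is the bookkeeping in constructing $\sV_i$: one needs that $s_i(\sZ_i)$, which is locally closed in $\sY$, is actually closed inside a suitable quasi-compact open neighborhood within $f^{-1}(\sX_i)$, and that this can be done compatibly enough that passing to the running unions $\sY_1\cup\dotsb\cup\sY_i$ does not destroy the Nisnevich square condition at earlier stages. This is where the monomorphicity of the sections (so that $s_i(\sZ_i)\to\sZ_i$ is a monomorphism, hence an isomorphism onto its image, and the image is open in $f^{-1}(\sZ_i)$ since $f$ is étale) and the quasi-compactness afforded by \corref{cor:Nis-qc} and the standing hypotheses on stacks are essential; everything else is formal manipulation of open substacks and base change of Cartesian squares.
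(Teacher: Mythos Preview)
Your approach is essentially the paper's: start from the monomorphic splitting sequence of \propref{prop:splitting-sequence} and, at each stage, cut down $f^{-1}(\sX_i)$ to an open whose fiber over $(\sX_i\setminus\sX_{i-1})_{\red}$ is exactly the section image. However, your justification for constructing $\sV_i$ contains an error, and this is precisely the ``obstacle'' you flag at the end. The claim that $s_i(\sZ_i)$ is closed in $f^{-1}(\sX_i\setminus\sX_{i-1})$ is false in general: since $\sZ_i=(\sX_i\setminus\sX_{i-1})_{\red}$, the spaces $f^{-1}(\sZ_i)$ and $f^{-1}(\sX_i\setminus\sX_{i-1})$ have the same underlying topology, so you are asserting that the section image is clopen, which would force it to be a union of connected components. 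That holds only when $f$ is separated (so that sections of $f$ are closed immersions), but the proposition does not assume this.

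The fix is that closedness is not needed at all, and the paper's construction makes this transparent. Since $s_i$ is an étale monomorphism, it is an open immersion into $\sY\times_\sX(\sX_i\setminus\sX_{i-1})$; hence the \emph{complement} of its image is closed there, and therefore closed in $\sY\times_\sX\sX_i$ (as $\sX_i\setminus\sX_{i-1}$ is closed in $\sX_i$). Call this closed substack $\sZ_i'$ and set $\sY_i=(\sY\times_\sX\sX_i)\setminus\sZ_i'$. This is exactly your desired $\sV_i$, obtained in one step. Moreover, since $\sZ_i'$ lies over $\sX_i\setminus\sX_{i-1}$, one has $\sY\times_\sX\sX_{i-1}\subset\sY_i$ automatically, so $\sY_{i-1}\subset\sY\times_\sX\sX_{i-1}\subset\sY_i$ and the chain is already nested; your final ``take running unions'' step is unnecessary. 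With this construction the Nisnevich square property is immediate from $\sY_i\times_{\sX_i}\sZ_i=s_i(\sZ_i)\cong\sZ_i$.
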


\begin{proof}
	The proof is exactly the same as \cite[Proposition 1.4]{MV}.
	Let $\sX_0\subset\dotsb\subset\sX_n$ be a monomorphic splitting sequence for $f$ (\propref{prop:splitting-sequence}), 
	and let $s_i:\sX_i\setminus\sX_{i-1}\to\sY\times_\sX(\sX_i\setminus\sX_{i-1})$
	be a monomorphic section of the projection. Then $s_i$ is an open immersion, so the complement 
	of the image of $s_i$ is a closed substack $\sZ_i\subset\sY\times_\sX\sX_i$.
	We can then take $\sY_i=(\sY\times_\sX\sX_i)\setminus\sZ_i$.
\end{proof}

\propref{prop:Nis-cd} implies that the Grothendieck topology
associated with the Nisnevich cd-structure is exactly the topology generated
by Nisnevich coverings.
The Nisnevich cd-structure on the category
of stacks clearly satisfies the assumptions of 
Voevodsky's descent criterion \cite[Theorem 3.2.5]{AHW}.
It follows that a presheaf of spaces or spectra $\sF$ satisfies descent
for Nisnevich coverings if and only if,
for every Nisnevich square~\eqref{eqn:Nis-square}, the induced square
\begin{equation*}
\xymatrix@C1pc{
\sF(\sX) \ar[r]^-{e^*} \ar[d]_{f^*} & \sF(\sU) \ar[d] \\
\sF(\sV) \ar[r] & \sF(\sW)}
\end{equation*}
is homotopy Cartesian.

The following recent result of Alper, Hall and Rydh \cite{AHR} on the
Nisnevich-local structure of some stacks
 will play an important role in the proof of our cdh-descent
theorem.
 
\begin{thm}[Alper--Hall--Rydh]\label{thm:Nis-local-aff}
Let $\sX$ be a stack, let $x\in\sX$ be a point, and let $\eta_x$ be its
residual gerbe.
Suppose that the stabilizer of $\sX$ at a representative of $x$ is a 
linearly reductive almost multiplicative group scheme.
Then there exists:
\begin{itemize}
	\item a morphism of affine schemes $U\to S$,
	\item a linearly reductive almost multiplicative group scheme $G$ over $S$ acting on $U$,
	\item a commutative diagram of stacks
\begin{equation*}
\xymatrix@C1pc{
\eta_x \ar@{^{(}->}[r] \ar@{=}[d] & [U/G] \ar[d]^{f} \\
\eta_x \ar@{^{(}->}[r] & \sX,}
\end{equation*}
where $f$ is {\'e}tale.
\end{itemize}
If $\sX$ has affine diagonal, we can moreover choose $f$ affine.
If $\sX$ has finite inertia and coarse moduli space $\pi:\sX\to X$,
 we can take $S$ to be an étale neighborhood of $\pi(x)$ in $X$.
\end{thm}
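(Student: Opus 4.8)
The plan is to reconstruct the étale local structure theorem of Alper--Hall--Rydh; rather than reprove everything I will indicate the architecture of the argument. Since all three conclusions are étale-local on $\sX$ and $\sX$ is quasi-compact and quasi-separated, the first move is a limit argument resting on the approximation theory for algebraic stacks (\cite{Rydh-approx} and its companions): it reduces the problem to the case in which $\sX$, the point $x$, and all auxiliary data are of finite presentation over a Noetherian base, so that deformation theory and Artin approximation are available. Since we are moreover always free to shrink $\sX$, $S$ and $U$ around the points in play, it then suffices to produce the required diagram after base change to a suitable complete local ring and to descend it afterwards.

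The heart of the matter is the construction of a slice along the residual gerbe $\eta_x$. Presenting $\eta_x$ as a gerbe over $\Spec\kappa(x)$ banded by the stabilizer $G_x$, I would run deformation theory to build, inside $\sX$, a compatible tower of square-zero thickenings $\eta_x=\sZ_0\hookrightarrow\sZ_1\hookrightarrow\cdots$ together with compatible identifications of the $\sZ_n$ with the infinitesimal neighborhoods of the zero section in an affine $G$-chart, where the chart plays the role of the normal space to $\eta_x$. The point of requiring the stabilizer to be linearly reductive is precisely that the relevant deformation and obstruction groups — higher cohomology of $\eta_x$ with coefficients in the pertinent (co)normal modules — vanish, since $\eta_x$ is a gerbe banded by a linearly reductive group scheme over a field; linear reductivity is also what makes the chart an affine $G$-scheme with a good quotient. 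Passing to the limit and feeding the resulting formal datum into the coherent-completeness theorem for Noetherian linearly reductive quotient stacks with complete local good moduli space, together with coherent Tannaka duality for algebraic stacks, algebraizes it to an honest morphism $[\widehat{U}/G]\to\sX$ over a complete local base; Artin approximation then replaces the completion by an ordinary étale neighborhood and produces $f\colon[U/G]\to\sX$. That $f$ is étale is forced by the construction: along $\eta_x$ it induces an isomorphism on cotangent complexes, so being of finite presentation it is étale on a neighborhood, and it restricts to the given immersion $\eta_x\hookrightarrow\sX$ by design. When the stabilizer is finite and $\sX$ has affine diagonal, this recovers the Luna-type slice theorem of \cite{AHR}.

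For the genuinely almost-multiplicative case I would first peel off the multiplicative-type part of the stabilizer: groups of multiplicative type are rigid, so a chart relative to the maximal such (normal) subgroup can be produced by a softer equivariant approximation argument, and the finite-étale slice machinery above is then applied fibrewise over that chart; since extensions and quotients of linearly reductive group schemes are again linearly reductive, the group $G$ that results stays almost multiplicative. The two refinements are handled separately afterwards. If $\sX$ has affine diagonal, one shows in addition that $f$ may be taken affine — here I would simply invoke \cite{AHR}, since this requires a further analysis of the chart. If $\sX$ has finite inertia, then every stabilizer is finite, so $G$ is a finite linearly reductive group scheme, $[U/G]$ admits the affine coarse moduli space $\Spec(\Gamma(U,\sO_U)^G)$, the induced morphism $\Spec(\Gamma(U,\sO_U)^G)\to X$ is étale near $\pi(x)$ because formation of coarse moduli spaces is compatible with representable étale morphisms of tame stacks, and one takes $S$ to be this coarse space. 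The step I expect to be the real obstacle — and where essentially all the content sits — is the algebraization: gaining enough control over coherent sheaves on the infinitesimal neighborhoods to apply Tannaka duality, and doing so with the group scheme $G$ genuinely varying over the base $S$ rather than being a fixed group over a field, which is exactly what forces one to develop relative forms of coherent completeness.
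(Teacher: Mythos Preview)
The paper does not prove this theorem at all: it is stated as a result of Alper--Hall--Rydh and attributed to the preprint \cite{AHR}, with no argument given. So there is nothing in the paper to compare your proposal against.

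Your sketch is a reasonable outline of the actual AHR argument (Noetherian approximation, deformation theory along $\eta_x$ with obstructions killed by linear reductivity, coherent completeness plus Tannaka duality to algebraize, Artin approximation to pass from the complete local setting to an étale neighborhood). But for the purposes of this paper you have done far more work than required: the authors treat Theorem~\ref{thm:Nis-local-aff} as a black box, and the appropriate ``proof'' here is simply a citation. One small correction to your finite-inertia paragraph: finite inertia does not force $G$ to be a finite \emph{étale} group scheme --- the multiplicative-type part of the stabilizer can survive as a finite diagonalizable group like $\mu_n$ --- so $G$ is finite linearly reductive but need not be étale; this does not affect the conclusion about $S$.
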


\begin{rem}
	Linearly reductive almost multiplicative group schemes are called \emph{nice}
	in \cite{HR-1} and \cite{AHR}, but this terminology is used differently in \cite{KR-2}, so we will avoid using it.
\end{rem}

\section{Perfect complexes on algebraic stacks}
\label{sec:perfect-complexes}

\subsection{Sheaves on stacks}\label{sec:sheaves}
Let $\sX$ be a stack. 
Let $\LisEt(\sX)$ denote the {\it lisse-\'etale site} of $\sX$.
Its objects are smooth morphisms $X \to \sX$, where $X$ is a quasi-compact 
quasi-separated scheme. The coverings are generated by the {\'e}tale covers of 
schemes.
Let $\Mod(\sX)$ (resp.\ $\QC(\sX)$) 
denote the abelian category of sheaves of $\sO_{\sX}$-modules (resp.\ of quasi-coherent sheaves) on 
$\LisEt(\sX)$.
It is well known that $\QC(\sX)$ 
and $\Mod(\sX)$ are Grothendieck abelian categories and hence have enough injectives
and all limits.

Let $\Ch(\sX)$ (resp.\ $\Ch_{\qc}(\sX)$) denote the 
category of all (possibly unbounded) chain complexes over $\Mod(\sX)$ 
(resp.\ the full subcategory of $\Ch(\sX)$ consisting of those chain complexes 
whose cohomology lies in $\QC(\sX)$). 
Let $D(\sX)$ and $D_{\qc}(\sX)$ denote their corresponding 
 derived categories, obtained by inverting quasi-isomorphisms.
If $\sZ \inj \sX$ is a closed substack with open complement $j: \sU \inj \sX$, 
we let 
$$\Ch_{\qc,\sZ}(\sX) = \{ \sF\in\Ch_{\qc}(\sX)\: |\: j^*(\sF) \text{ is quasi-isomorphic to } 0\}.$$ 
The derived category of $\Ch_{\qc,\sZ}(\sX)$ will be denoted by $D_{\qc,\sZ}(\sX)$.

Let $j: \sX \to \sY$ be a smooth morphism of algebraic stacks.
We then have the pullback functor
$j^*: \Mod(\sY) \to \Mod(\sX)$ which preserves quasi-coherent sheaves.
Since $j$ is smooth, the functor $j^*$ is simply the restriction
functor under the inclusion $\LisEt(\sX)\subset\LisEt(\sY)$.

Recall from \cite[Definition~I.4.2]{SGA6} that a complex of
$\sO_X$-modules on a scheme $X$ is perfect if it is locally 
quasi-isomorphic to a bounded complex of locally free sheaves.

\begin{defn} \label{defn:Perfect-complex}
Let $\sX$ be a stack.
A chain complex $P \in \Ch_{\qc}(\sX)$ is called {\it perfect} if for any 
affine scheme $U = \Spec(A)$ with a smooth morphism $s: U \to \sX$, 
the complex of $A$-modules $s^*(P) \in \Ch(\Mod(A))$ is quasi-isomorphic 
to a bounded complex of finitely generated projective $A$-modules.
Equivalently, $s^*(P)$ is a perfect complex in $\Ch(\Mod(A))$ in 
the sense of \cite{TT}.
\end{defn}

It follows from \cite[Lemma~2.5]{KR-2} that the above definition coincides
with that of \cite{TT} if $\sX$ is a scheme.
We shall denote the derived category of perfect complexes on $\sX$ by $D_{\perf}(\sX)$.
The derived category of perfect complexes on $\sX$ whose cohomology is supported on 
a closed substack $\sZ$ will be denoted by $D_{\perf,\sZ}(\sX)$.

We will also need to use the canonical dg-enhancements of the triangulated categories
$D_{\qc}(\sX)$ and $D_{\perf}(\sX)$, denoted by $\mathsf D_{\qc}(\sX)$ and $\mathsf D_{\perf}(\sX)$,
respectively, whose construction we now recall. 
 If $\sX$ is an affine scheme, $\mathsf D_{\qc}(\sX)$ is the usual symmetric monoidal
derived dg-category of $\sO(\sX)$. The 2-category of stacks embeds fully faithfully
in the 2-category of presheaves of groupoids on affine schemes, which further embeds in
the $\infty$-category $\mathrm{sPre}(\mathbf{Aff})$ of simplicial presheaves on affine schemes. 
Then one defines $\mathsf D_{\qc}$ as a presheaf of symmetric monoidal dg-categories
on $\mathrm{sPre}(\mathbf{Aff})$ to be the homotopy right Kan extension of $\mathsf D_{\qc}|_\mathbf{Aff}$
(see \cite[\S6.2]{SAG}). 
In other words, it is the unique extension of $\mathsf D_{\qc}|_\mathbf{Aff}$ that transforms homotopy
colimits into homotopy limits. One can show that $\mathsf D_{\qc}$ satisfies descent for
the fpqc topology on $\mathrm{sPre}(\mathbf{Aff})$ \cite[Proposition 6.2.3.1]{SAG}.
For $\sX$ a stack, the homotopy category of $\mathsf D_{\qc}(\sX)$ is then
equivalent to $D_{\qc}(\sX)$. 
If $\sX$ is an algebraic space (or more generally a Deligne--Mumford stack), this is proved in
\cite[Proposition 6.2.4.1]{SAG}. In general, this follows from the description of $D_{\qc}(\sX)$
in terms of a smooth representable cover of $\sX$ by an algebraic space, 
see for instance \cite[\S1.1]{HR-2}.
Finally, $\mathsf D_{\perf}\subset\mathsf D_{\qc}$ is the full symmetric monoidal
dg-subcategory spanned by the dualizable objects. Since the process of
passing to dualizable objects preserves homotopy limits of dg-categories
\cite[Proposition 4.6.1.11]{HA}, $\mathsf D_{\perf}$ is similarly
the unique extension of $\mathsf D_{\perf}|_\mathbf{Aff}$ to $\mathrm{sPre}(\mathbf{Aff})$ that transforms
homotopy colimits into homotopy limits, and it satisfies fpqc descent.

\begin{prop}\label{prop:Excision}
	Let $f:\sX'\to\sX$ be an étale morphism of stacks and let
	$\sZ \subset \sX$ be a closed substack with quasi-compact open complement such that the projection
	$\sZ\times_\sX\sX'\to\sZ$ is an isomorphism of associated reduced stacks.
	Then the functor \[f^*: D_{\perf, \sZ}(\sX) \to D_{\perf, \sZ \times_{\sX} \sX'}(\sX')\]
	is an equivalence of triangulated categories.
\end{prop}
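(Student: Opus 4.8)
The plan is to reduce the statement to the analogous excision result for schemes established by Thomason--Trobaugh, using the fpqc-descent enhancement $\mathsf D_{\qc}$ introduced above and the Nisnevich-square formalism. The key observation is that the hypotheses on $f\colon\sX'\to\sX$ and $\sZ$ are precisely those of a Nisnevich square~\eqref{eqn:Nis-square} (with $\sV=\sX'$, $\sU=\sX\setminus\sZ$, $\sW=\sX'\setminus(\sZ\times_\sX\sX')$), so the claim is equivalent to saying that the presheaf of dg-categories $\mathsf D_{\perf}$ (or rather its variant with support) sends Nisnevich squares to homotopy Cartesian squares. I would therefore first recall that $\mathsf D_{\perf}$ satisfies fpqc descent, hence in particular Nisnevich descent, on the category of stacks; this is already recorded in the excerpt.

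\emph{First} I would reduce to the absolute statement without supports. For any stack $\sY$ with quasi-compact open substack $\sV\subset\sY$ and closed complement $\sW$, one has a localization sequence of dg-categories $\mathsf D_{\perf,\sW}(\sY)\to\mathsf D_{\perf}(\sY)\to\mathsf D_{\perf}(\sV)$, where the last functor is a Verdier quotient up to idempotent completion (this is the stacky analogue of \cite[\S5]{TT}, and for the stacks in play it follows from the scheme case by smooth descent along a presentation, cf.\ \cite[Lemma~2.4]{KR-2} and the discussion of $\mathsf D_{\qc}$ above). Applying this to both rows of the Nisnevich square and using that $f^*\colon\mathsf D_{\perf}(\sX)\to\mathsf D_{\perf}(\sX')$ and $\mathsf D_{\perf}(\sU)\to\mathsf D_{\perf}(\sW)$ are compatible with the quotients, the desired equivalence $f^*\colon\mathsf D_{\perf,\sZ}(\sX)\xrightarrow{\sim}\mathsf D_{\perf,\sZ\times_\sX\sX'}(\sX')$ of the fibers follows once we know the outer square
\[
\xymatrix@C1pc{
\mathsf D_{\perf}(\sX)\ar[r]\ar[d] & \mathsf D_{\perf}(\sU)\ar[d]\\
\mathsf D_{\perf}(\sX')\ar[r] & \mathsf D_{\perf}(\sW)}
\]
is homotopy Cartesian. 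But this is exactly Nisnevich descent for $\mathsf D_{\perf}$, already available. Passing back to homotopy categories (triangulated categories of perfect complexes) and noting that the homotopy category of a homotopy fiber of dg-categories, here $\mathsf D_{\perf,\sZ}(\sX)$, computes $D_{\perf,\sZ}(\sX)$ gives the claim.

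\emph{The main obstacle} is the bookkeeping around idempotent completions: the Verdier quotient $\mathsf D_{\perf}(\sX)/\mathsf D_{\perf,\sZ}(\sX)\to\mathsf D_{\perf}(\sU)$ is only an equivalence after idempotent completion in general, so one must either (a) work with $K$-theory/Verdier-quotient-up-to-factors throughout and invoke that the comparison of $K$-theory is insensitive to idempotent completion in the relevant range, or (b) observe that in a Nisnevich square the functor $f^*$ on the subcategories with support is an \emph{exact equivalence on the nose} because étale excision identifies the two categories of perfect complexes supported on isomorphic infinitesimal neighborhoods of $\sZ$—this is the route taken by Thomason for schemes (\cite[Theorem~2.6.3]{TT}) and it generalizes directly once one has descent along a smooth presentation. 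I would take route (b): choose a smooth representable cover $X\to\sX$ by an algebraic space, pull back the Nisnevich square, further reduce to schemes, invoke Thomason--Trobaugh excision there, and then descend the equivalence back up using that $\mathsf D_{\perf}$ satisfies smooth (indeed fpqc) descent. The only point requiring care is that the pulled-back square over $X$ is still a Nisnevich (in fact étale-excision) square, which is immediate since the conditions on $\sZ$—closed, quasi-compact open complement, isomorphism of reduced preimages—are stable under the flat base change $X\to\sX$.
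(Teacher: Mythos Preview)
Your first paragraph is exactly the paper's proof: $\mathsf D_{\perf}$ satisfies fpqc descent, hence Nisnevich descent, hence the square
\[
\xymatrix@C1pc{
\mathsf D_{\perf}(\sX)\ar[r]\ar[d] & \mathsf D_{\perf}(\sX\setminus\sZ)\ar[d]\\
\mathsf D_{\perf}(\sX')\ar[r] & \mathsf D_{\perf}(\sX'\setminus(\sZ\times_\sX\sX'))}
\]
is homotopy Cartesian, and therefore $f^*$ induces an equivalence on the horizontal fibers. That is the entire argument.

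Your ``main obstacle'' is a phantom. The categories $\mathsf D_{\perf,\sZ}(\sX)$ and $\mathsf D_{\perf,\sZ\times_\sX\sX'}(\sX')$ are \emph{by definition} the fibers (kernels) of the horizontal restriction functors, not something constructed from Verdier quotients. For a homotopy Cartesian square of stable $\infty$-categories (or dg-categories), the induced functor on fibers is an equivalence on the nose; the question of whether the cofiber $\mathsf D_{\perf}(\sX)/\mathsf D_{\perf,\sZ}(\sX)\to\mathsf D_{\perf}(\sU)$ is an equivalence or only an equivalence up to idempotent completion simply does not enter. Consequently your route (b)---reducing to schemes via a smooth cover and invoking Thomason--Trobaugh---is unnecessary, and the localization sequence you set up in the first step is also not needed in full strength: you only use that the support category is the fiber, which is its definition.

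So: keep the first paragraph, delete the second. The proof is then identical to the paper's.
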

\begin{proof}
The presheaf of dg-categories $\sX\mapsto \mathsf D_{\perf}(\sX)$
satisfies descent for the fpqc topology on stacks. In particular,
it satisfies Nisnevich descent, so that the square of dg-categories
\begin{equation*}
\xymatrix@C1pc{
\mathsf D_{\perf}(\sX) \ar[r] \ar[d]_{f^*} & \mathsf D_{\perf}(\sX\setminus\sZ) \ar[d] \\
\mathsf D_{\perf}(\sX') \ar[r] & \mathsf D_{\perf}(\sX'\setminus (\sZ \times_{\sX} \sX')).}
\end{equation*}
is homotopy Cartesian. It follows that $f^*$ induces an equivalence between the kernels
of the horizontal functors.
\end{proof}

\subsection{Perfect stacks}\label{sec:CPerf}

\begin{defn}\label{defn:perfect}
Let $\sX$ be a stack. We shall say that $\sX$ is {\sl perfect} if
the triangulated category $D_{\qc}(\sX)$ is compactly generated and $\sO_\sX$ is compact in $D_{\qc}(\sX)$.

If $\sZ\subset\sX$ is a closed substack with quasi-compact open complement, 
we shall say that the pair $(\sX,\sZ)$ is {\sl perfect}
if $\sX$ is perfect and there exists a perfect complex on $\sX$ with support $\lvert\sZ\rvert$.
\end{defn} 

We will see in \propref{prop:cpt-obj-D_(qc)(X on Z)} below that
 our notion of perfect stack agrees with the one introduced in \cite{BZFN},
except that we do not require perfect stacks to have affine diagonal.

Let $f: \sX' \to \sX$ be a morphism of
stacks. We say that $f$ is {\sl concentrated} if for every morphism
$g: \sZ \to \sX$, the morphism $f': \sX' \times_{\sX} \sZ \to \sZ$ has
finite cohomological dimension for quasi-coherent sheaves.

\begin{lem}\label{lem:Concentrated}
Let $f: \sX' \to \sX$ be a representable morphism of stacks.
Then $f$ is concentrated. 
In particular, if $\sO_\sX$ is compact, then $\sO_{\sX'}$ is compact.
\end{lem}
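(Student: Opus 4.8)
The plan is to reduce to the well-understood case of representable morphisms of algebraic spaces, and ultimately to affine schemes, by using base change and the smooth-locality of cohomological dimension. First I would unwind the definition: to show $f$ is concentrated, fix an arbitrary morphism $g\colon\sZ\to\sX$ and set $f'\colon\sX'\times_\sX\sZ\to\sZ$; we must bound the cohomological dimension of $Rf'_*$ on quasi-coherent sheaves. Since $f$ is representable, so is $f'$, and whether a morphism has finite cohomological dimension can be checked smooth-locally on the target. So I would choose a smooth surjection $V\to\sZ$ from a scheme (in fact we may take $V$ affine), and it suffices to bound the cohomological dimension of the base-changed morphism $\sX'\times_\sX V\to V$. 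Because $f$ was assumed representable, $\sX'\times_\sX V$ is an algebraic space, so we have reduced the whole question to: a representable (hence schematic-by-algebraic-spaces) finitely presented morphism from an algebraic space to an affine scheme has finite cohomological dimension for quasi-coherent sheaves.

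The second step is to settle that reduced statement. Here I would invoke the standard fact that a quasi-compact, quasi-separated morphism of algebraic spaces $W\to V$ has finite cohomological dimension, which follows by choosing an étale (or Nisnevich) hypercover of $W$ by affine schemes of bounded length — the length being controlled by the quasi-compactness and quasi-separatedness built into our running conventions — and using that each affine has cohomological dimension $0$ together with the fact that $V$ is affine (so pushforward to $V$ is exact on quasi-coherent sheaves). Concretely, one can cite \cite[Tag 073G]{SP} or the analogous statement in \cite{HR-2} for the vanishing of $R^i f'_*\sF$ in degrees above a bound depending only on $W$. This gives the first assertion.

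For the last sentence, suppose $\sO_\sX$ is compact in $D_{\qc}(\sX)$; I want to conclude $\sO_{\sX'}$ is compact in $D_{\qc}(\sX')$. The argument is: compactness of $\sO_\sX$ means that $R\Gamma(\sX,-)=\Hom_{D_{\qc}(\sX)}(\sO_\sX,-)$ commutes with arbitrary direct sums in $D_{\qc}(\sX)$. Now $R\Gamma(\sX',-)=R\Gamma(\sX,Rf_*(-))$, so it suffices to show $Rf_*\colon D_{\qc}(\sX')\to D_{\qc}(\sX)$ commutes with direct sums. Since $f$ is concentrated (just proved), $Rf_*$ has finite cohomological dimension, and finite cohomological dimension is precisely what allows one to commute $Rf_*$ past infinite coproducts (one truncates, applies the uniformly bounded vanishing, and passes to the colimit); this is the standard criterion, e.g.\ \cite[Lemma 3.9]{HR-2} or \cite{BZFN}. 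Composing the two coproduct-preserving functors shows $R\Gamma(\sX',-)$ preserves coproducts, i.e.\ $\sO_{\sX'}$ is compact.

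The main obstacle I anticipate is not any single hard theorem but making the reduction in the first step fully rigorous in the stacky setting: one must be careful that ``finite cohomological dimension for quasi-coherent sheaves'' behaves well under the smooth base change $V\to\sZ$ (so that a bound over the cover descends to a bound over $\sZ$), and that the bound obtained is genuinely uniform over \emph{all} test morphisms $g$, not just one. This is handled by the fact that, after base change to $V$, the morphism $\sX'\times_\sX V\to V$ is obtained from $f$ by pullback, so its source is covered by an affine hypercover whose length is bounded by an invariant of $f$ alone (coming from quasi-compactness and quasi-separatedness of $f$), independent of $g$ — this uniformity is exactly the content of representability plus our standing finiteness conventions on stacks.
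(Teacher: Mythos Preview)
Your proof is correct and takes essentially the same route as the paper: smooth-locally reduce to a morphism of algebraic spaces (the paper cites \cite[Lemma~2.5(2)]{HR-2} for this descent step), invoke \cite[Tag~073G]{SP}, and deduce compactness of $\sO_{\sX'}$ from the fact that $Rf_*$ preserves coproducts (the paper packages this as ``$f^*$ preserves compact objects'' via Neeman's criterion and \cite[Theorem~2.6(3)]{HR-2}). Your concern in the last paragraph about a \emph{uniform} bound over all test morphisms $g$ is unnecessary, since the definition of concentrated only requires finite cohomological dimension for each base change separately.
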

\begin{proof}
Since $f$ is representable, and since concentrated morphisms have
faithfully flat descent by \cite[Lemma~2.5 (2)]{HR-2}, we can assume that
$f$ is a morphism of algebraic spaces. Now, the result follows
because any quasi-compact and quasi-separated morphism of 
algebraic spaces is concentrated (see \cite[Tag 073G]{SP}). 
For the second statement, it suffices to show
using \cite[Theorem~5.1]{Neeman-1} that the right adjoint $f_*: D_{\qc}(\sX') \to D_{\qc}(\sX)$
of $f^*$ preserves  small coproducts.
 This follows from the first statement and \cite[Theorem 2.6 (3)]{HR-2}.
\end{proof} 

\begin{prop} \label{prop:cpt-obj-D_(qc)(X on Z)}
Let $(\sX,\sZ)$ be a perfect pair.
Then the triangulated category $D_{\qc,\sZ}(\sX)$ is compactly generated.
Moreover, an object of $D_{\qc,\sZ}(\sX)$
is compact if and only if it is perfect.
\end{prop}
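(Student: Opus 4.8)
The plan is to reduce the statement to the corresponding fact about perfect complexes on a perfect stack (without support conditions), which is presumably already available, and then transport it along the support filtration. First I would recall the standard Neeman-style machinery: $D_{\qc}(\sX)$ is compactly generated with compact objects exactly the perfect complexes (this is part of the hypothesis that $\sX$ is perfect, together with the fact that $\sO_\sX$ compact generates a thick subcategory containing all perfect complexes, and conversely every compact object is perfect since $\sO_\sX$ is compact and a generator — see \cite{BZFN}, \cite{Neeman-1}). The category $D_{\qc,\sZ}(\sX)$ is the kernel of the localization functor $j^*\colon D_{\qc}(\sX)\to D_{\qc}(\sU)$, where $j\colon\sU\inj\sX$ is the open complement of $\sZ$; since $j^*$ has a fully faithful right adjoint $j_*$ (both $\sX$ and $\sU$ are perfect, and $j$ is representable so $j_*$ preserves coproducts by \lemref{lem:Concentrated}), this is a smashing localization and we are in the setting of \cite[Theorem~2.1]{Neeman-1} (Neeman–Thomason localization).

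The key input is the existence, guaranteed by the definition of a perfect pair, of a perfect complex $K$ on $\sX$ with support exactly $\lvert\sZ\rvert$. I would argue that $K$ is a compact object of $D_{\qc,\sZ}(\sX)$: it lies in $D_{\qc,\sZ}(\sX)$ because $j^*K$ has empty support hence is acyclic; and it is compact in $D_{\qc,\sZ}(\sX)$ because the inclusion $D_{\qc,\sZ}(\sX)\inj D_{\qc}(\sX)$ preserves coproducts (coproducts in $D_{\qc,\sZ}$ are computed in $D_{\qc}$, as $j^*$ preserves coproducts) and $K$ is compact in the ambient category. Then I would invoke the Thomason–Neeman localization theorem in the form: if $j^*\colon \sT \to \sT/\sS$ is a Bousfield localization of compactly generated triangulated categories sending compact objects to compact objects and generating $\sT/\sS$, then $\sS$ is compactly generated, and its compact objects are precisely the direct summands of those compact objects of $\sT$ that lie in $\sS$. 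Applying this with $\sT = D_{\qc}(\sX)$, $\sS = D_{\qc,\sZ}(\sX)$: compact objects of $D_{\qc,\sZ}(\sX)$ are the summands in $D_{\qc,\sZ}(\sX)$ of perfect complexes on $\sX$; such summands are themselves perfect (perfect complexes are closed under summands) and supported on $\sZ$, hence lie in $D_{\perf,\sZ}(\sX)$. Conversely, any object of $D_{\perf,\sZ}(\sX)$ is compact in $D_{\qc}(\sX)$ and lies in $D_{\qc,\sZ}(\sX)$, hence is compact there. Finally, to see that $D_{\qc,\sZ}(\sX)$ is compactly generated one checks that the perfect complex $K$ (or the collection of all objects of $D_{\perf,\sZ}(\sX)$) generates: an object $M\in D_{\qc,\sZ}(\sX)$ with $\Hom(P[n],M)=0$ for all $P\in D_{\perf,\sZ}(\sX)$ and all $n$ must vanish, which follows because $\sX$ is perfect so $M$ is detected by perfect complexes on $\sX$, and one reduces a general perfect complex testing against $M$ to one supported on $\sZ$ using the classical "tensor with $K$" or Koszul/Rouquier-style argument — the support condition on $M$ forces the restriction to $\sU$ to be irrelevant.

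The step I expect to be the main obstacle is the precise verification that $D_{\perf,\sZ}(\sX)$ generates $D_{\qc,\sZ}(\sX)$ — i.e. producing, from the single perfect complex $K$ with support $\lvert\sZ\rvert$, enough compact objects to detect every object of the subcategory. In the scheme case this is Thomason's argument with Koszul complexes on local sections of an ideal sheaf; in the stacky setting one must instead lean on the abstract localization theorem plus the fact that $K^{\otimes}$-generates the thick subcategory it determines, using that the $\otimes$-ideal generated by $K$ inside $D_{\perf}(\sX)$ consists of perfect complexes supported on $\sZ$, together with \propref{prop:Excision} to handle the étale-local verification of supports. Everything else is a formal consequence of Neeman–Thomason localization and the hypotheses packaged into \defref{defn:perfect}.
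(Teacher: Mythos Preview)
Your strategy is sound in outline, but the form of the Neeman--Thomason localization theorem you invoke is not correct as stated: compact generation of the kernel $\sS$ does \emph{not} follow from compact generation of $\sT$ and $\sT/\sS$ together with $j^*$ preserving compact objects. The correct statement runs in the other direction --- one must first know that $\sS$ is generated, as a localizing subcategory, by objects that are compact in $\sT$, and only then does one conclude that $\sS$ is compactly generated with $\sS^c$ the thick closure of those generators in $\sT^c\cap\sS$. You evidently sense this, since you immediately flag generation as ``the main obstacle,'' but as written the argument first claims compact generation as a consequence of the localization theorem and then turns around to verify generation separately, which is circular. Once you reorder so that the ``tensor with $K$'' generation argument comes first, the rest follows formally.

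The paper takes a more citation-heavy route. For compact generation of $D_{\qc,\sZ}(\sX)$ it simply invokes \cite[Lemma~4.10]{HR-2}, whose proof is precisely the tensor-with-$K$ argument you sketch: the objects $K\otimes Q$ for $Q\in D_\qc(\sX)^c$ are compact, lie in $D_{\qc,\sZ}(\sX)$, and generate it via $\Hom(K\otimes Q,M)\cong\Hom(Q,K^\vee\otimes M)$ together with a support argument. For the identification of compact and perfect objects the paper does not go through Neeman's description of $\sS^c$; instead it observes that perfect implies compact because $\sO_\sX$ is compact and perfect complexes are dualizable, and for the converse it cites the atlas argument of \cite[Proposition~2.6, Lemma~2.7]{KR-2}, noting that the only place those proofs use a quotient-stack hypothesis is to ensure the atlas map is concentrated, which here follows from \lemref{lem:Concentrated}. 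Both routes rely on the same decisive input --- the perfect complex with support $\lvert\sZ\rvert$ --- and either works once the logical order is straightened out; the paper's version is shorter only because it outsources the two substantive steps to existing references.
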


\begin{proof}
Since $\sO_{\sX} \in D_{\qc}(\sX)$ is compact and since a perfect complex
on $\sX$ is dualizable, it follows that every perfect complex on
$\sX$ is compact. On the other hand, it follows from the proofs of
\cite[Proposition~2.7, Lemma~2.8]{KR-2} that compact objects of 
$D_{\qc}(\sX)$ and $D_{\qc,\sZ}(\sX)$ are perfect. The only remark we need to make 
here is that the proofs in {\sl loc.\ cit.}
assume that $\sX$ is a quotient stack.
However, this assumption is used only to ensure that  
if we choose an atlas $u: U \to \sX$, then $u$ has finite
cohomological dimension for quasi-coherent sheaves. But this follows from
\lemref{lem:Concentrated} because $\sX$ has representable diagonal and hence
$u$ is representable.
Finally, the existence of a perfect complex with support $\lvert\sZ\rvert$
implies, by \cite[Lemma 4.10]{HR-2}, that $D_{\qc,\sZ}(\sX)$ is compactly generated.
\end{proof}

\begin{lem}\label{lem:Quasi-proj}
Let $f: \sY \to \sX$
be a schematic morphism of stacks with a relatively ample family of line bundles. 
If $D_{\qc}(\sX)$ is compactly 
generated, so is $D_{\qc}(\sY)$.
\end{lem}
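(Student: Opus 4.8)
The plan is to mimic the classical argument for schemes (e.g.\ the Thomason--Trobaugh argument via ample families of line bundles, cf.\ \cite{TT} and the stacky version in \cite{KR-2}). We are given a schematic morphism $f\colon\sY\to\sX$ with a relatively ample family $\{\sL_\alpha\}$ of line bundles, and we know $D_{\qc}(\sX)$ is compactly generated. We want a set of compact generators for $D_{\qc}(\sY)$.

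First I would observe that since $f$ is schematic (in particular representable) and $\sO_\sX$ need not be compact, but whatever set $\{P_i\}$ of compact generators we have for $D_{\qc}(\sX)$, the objects $f^*P_i$ are compact in $D_{\qc}(\sY)$: indeed $f$ is concentrated by \lemref{lem:Concentrated}, so $f_*\colon D_{\qc}(\sY)\to D_{\qc}(\sX)$ preserves small coproducts (\cite[Theorem 5.1]{Neeman-1}), hence $f^*$ preserves compact objects. Next, using the relatively ample family, I would form the set $\mathcal{G}=\{\,f^*(P_i)\otimes \sL_{\alpha_1}^{-1}\otimes\cdots\otimes\sL_{\alpha_r}^{-1}\,\}$ ranging over all $i$, all finite tuples of indices $\alpha_j$, and all choices of tensoring with the duals of the ample line bundles; each such object is again compact (tensoring with a line bundle is an auto-equivalence of $D_{\qc}(\sY)$ preserving compactness). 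The claim is that $\mathcal{G}$ generates $D_{\qc}(\sY)$.

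The heart of the argument is the generation step: suppose $E\in D_{\qc}(\sY)$ satisfies $\Hom_{D_{\qc}(\sY)}(G, E[n])=0$ for all $G\in\mathcal{G}$ and all $n\in\Z$; I must show $E\simeq 0$. By adjunction, $\Hom(f^*P_i\otimes \sL^{-1}, E[n])\cong \Hom(P_i, f_*(\sL\otimes E)[n])$, so the hypothesis says $f_*(\sL_{\alpha_1}\otimes\cdots\otimes\sL_{\alpha_r}\otimes E)\simeq 0$ for every finite tuple (using that the $P_i$ generate $D_{\qc}(\sX)$). Now I would invoke the defining property of a relatively ample family: working locally on $\sX$ over an affine atlas $\Spec(A)\to\sX$ — which reduces $\sY$ to a quasi-compact quasi-separated scheme over $A$ with an ample family — the vanishing of all these pushforwards forces $E=0$ by the standard ample-family argument of Thomason--Trobaugh (this is exactly the content of \cite[Lemma 2.7]{KR-2} or the scheme-level statements in \cite{TT}). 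Since vanishing of $E$ can be checked after pullback along the atlas (as $D_{\qc}$ has fpqc descent), we conclude $E\simeq 0$, so $\mathcal{G}$ is a set of compact generators and $D_{\qc}(\sY)$ is compactly generated.

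The main obstacle I anticipate is handling the relatively ample family correctly in the stacky setting: unlike a single ample line bundle, an ample \emph{family} requires keeping track of all finite tensor products, and one must be careful that the reduction to an affine atlas $\Spec(A)\to\sX$ genuinely reduces the problem to the classical scheme statement for $\sY\times_\sX\Spec(A)$ — here one uses that ``relatively ample family'' is defined precisely so as to be stable under base change and to restrict to an ample family on each such fiber product. The other delicate point is ensuring $f^*$ of a compact object is compact; this is where concentratedness of representable morphisms (\lemref{lem:Concentrated}) is essential, since we are not assuming $\sO_\sX$ or $\sO_\sY$ compact a priori. Modulo these points, the proof is a routine transcription of the Thomason--Trobaugh/Neeman generation technique.
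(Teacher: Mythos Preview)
Your proposal is correct and follows essentially the same route as the paper: show $f^*$ preserves compact objects via concentratedness of representable morphisms (\lemref{lem:Concentrated}), form candidate generators by twisting pullbacks of compact generators of $D_{\qc}(\sX)$ by negative powers of the ample family, use adjunction to translate the orthogonality hypothesis into vanishing of $f_*(\sL\otimes E)$, and then reduce via a smooth affine atlas and flat base change to the classical scheme statement. The only cosmetic difference is that the paper uses just the powers $\sL_i^{\otimes -n}$ of individual line bundles rather than arbitrary finite products, which already suffice to generate in the scheme case.
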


\begin{proof}
	Let $\{\sL_i\}_{i\in I}$ be an $f$-ample family of line bundles on $\sY$.
By \lemref{lem:Concentrated}, $f$ is a concentrated morphism.
It follows from \cite[Theorem 2.6 (3)]{HR-2} that $f_*:D_\qc(\sY)\to D_\qc(\sX)$
preserves small coproducts, and hence that its left adjoint $f^*$ preserves
compact objects. It will therefore suffice to show that $D_\qc(\sY)$ is generated by the objects
$f^*(\sF)\otimes \sL_i^{\otimes -n}$, for $\sF\in D_\qc(\sX)$ compact, $i\in I$, and $n\ge 1$.
So let $\sG \in D_{\qc}(\sY)$ be such that 
$\Hom(f^*(\sF) \otimes \sL_i^{\otimes -n},\sG) = 0$ for every $\sF$ compact, $i\in I$, and $n\ge 1$.
By adjunction, we have
$\Hom(\sF, f_*(\sG \otimes \sL_i^{\otimes n})) = 0$.
Since $D_{\qc}(\sX)$ is compactly generated, it follows that
\begin{equation}\label{eqn:Quasi-proj-0}
f_*(\sG \otimes \sL_i^{\otimes n}) = 0
\end{equation}
for every $i\in I$ and $n\ge 1$.

To show that $\sG = 0$ in $D_{\qc}(\sY)$, we let $u: U \to \sX$ be a smooth
surjective morphism such that $U$ is affine. This gives rise to a Cartesian
square
\begin{equation*}\label{eqn:Quasi-proj-1}
\xymatrix@C1pc{
V \ar[r]^-{v} \ar[d]_{g} & \sY \ar[d]^{f} \\
U \ar[r]^-u & \sX}
\end{equation*}
where $V$ is a scheme.
Since $v$ is faithfully flat, it suffices to show that
$v^*(\sG) = 0$. It follows from \cite[Corollary~4.13]{HR-2} and
~\eqref{eqn:Quasi-proj-0} that 
$g_*(v^*\sG \otimes v^*(\sL_i)^{\otimes n}) = 0$ for all 
$i\in I$. Replacing $\sY$ by $V$ and $\sL_i$ by $v^*(\sL_i)$, we
can assume that $\sX$ is an affine scheme, so that $\sY$ is a scheme
 and $\{\sL_i\}_{i\in I}$ is an ample family of line bundles on $\sY$.
In this case, \eqref{eqn:Quasi-proj-0} says that
$\Hom(\sL_i^{\otimes -n}[m], \sG) = 0$ for all $i\in I$, $n\ge 1$, and $m \in \Z$.
But this implies that $\sG$ is acyclic because
$D_{\qc}(\sY)$ is generated by $\{\sL_i^{\otimes -n}\}_{i\in I,n\ge 1}$.
Indeed, $D_\qc(\sY)$ is compactly generated by bounded complexes of vector bundles
\cite[Theorem 2.3.1 (d)]{TT}, and every vector bundle admits an 
epimorphism from a sum of line bundles of the form $\sL_i^{\otimes -n}$.
\end{proof}

\begin{prop}\label{prop:perfect}
Let $(\sX,\sZ)$ be a perfect pair.
\begin{enumerate}
	\item For every algebraic space $Y$ and closed subspace $W\subset Y$ with quasi-compact open complement, $(\sX\times Y,\sZ\times W)$ is perfect.
	\item For every schematic morphism $f\colon\sY\to\sX$ with a relatively ample family of line bundles, $(\sY,\sY\times_\sX\sZ)$ is perfect.
\end{enumerate}
\end{prop}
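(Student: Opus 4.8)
The plan is to reduce everything to the results already established about compact generation of $D_{\qc}$ and the existence of perfect complexes with prescribed support. Recall that a perfect pair $(\sX,\sZ)$ means $D_{\qc}(\sX)$ is compactly generated, $\sO_\sX$ is compact, and there is a perfect complex on $\sX$ supported exactly on $\lvert\sZ\rvert$. So for each of the two assertions I need to verify these three conditions for the new stack and the new closed substack.

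For part (1), let $p\colon\sX\times Y\to\sX$ and $q\colon\sX\times Y\to Y$ be the two projections. First I would observe that $p$ is representable (since $Y$, being a quasi-compact quasi-separated algebraic space, has representable — in fact schematic — structure morphism, and this is stable under base change), so by \lemref{lem:Concentrated} the compactness of $\sO_\sX$ gives compactness of $\sO_{\sX\times Y}$. For compact generation of $D_{\qc}(\sX\times Y)$: I would invoke a Künneth-type argument. Since $Y$ is a quasi-compact quasi-separated algebraic space it has a finite Nisnevich (even Zariski after refinement) stratification by schemes, and each quasi-affine scheme has $D_{\qc}$ compactly generated by a single perfect complex; combining this with compact generation of $D_{\qc}(\sX)$ and the fact that $p_*$ preserves coproducts (as $p$ is concentrated, by \lemref{lem:Concentrated} and \cite[Theorem 2.6 (3)]{HR-2}) shows $D_{\qc}(\sX\times Y)$ is compactly generated — more directly, one can cite \cite[Theorem 2.6 (4)]{HR-2} or the base-change/Künneth statements in \cite{HR-2} or \cite{BZFN} which give compact generation of a fiber product when one factor is a nice stack and the other has compactly generated $D_{\qc}$. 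Finally, if $P$ is a perfect complex on $\sX$ with support $\lvert\sZ\rvert$ and $Q$ is a perfect complex on $Y$ with support $\lvert W\rvert$ (which exists since $(Y,W)$ is a perfect pair of algebraic spaces — $W$ has a finitely presented defining ideal after refining, by Noetherian approximation / \cite[Lemma 4.10]{HR-2} applied to the algebraic space $Y$), then $p^*P\otimes q^*Q$ is a perfect complex on $\sX\times Y$ with support $\lvert\sZ\rvert\times\lvert W\rvert=\lvert\sZ\times W\rvert$. Hence $(\sX\times Y,\sZ\times W)$ is perfect.

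For part (2), the morphism $f\colon\sY\to\sX$ is schematic, hence representable, so again by \lemref{lem:Concentrated} compactness of $\sO_\sX$ yields compactness of $\sO_\sY$. Compact generation of $D_{\qc}(\sY)$ is exactly \lemref{lem:Quasi-proj}, whose hypotheses (schematic morphism with a relatively ample family of line bundles, $D_{\qc}(\sX)$ compactly generated) are satisfied. It remains to produce a perfect complex on $\sY$ with support $\lvert\sY\times_\sX\sZ\rvert$: if $P$ is a perfect complex on $\sX$ supported on $\lvert\sZ\rvert$, then $f^*P$ is perfect (pullback of a dualizable object is dualizable) and has support $f^{-1}(\lvert\sZ\rvert)=\lvert\sY\times_\sX\sZ\rvert$, since support is compatible with base change along the representable morphism $f$. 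Thus $(\sY,\sY\times_\sX\sZ)$ is perfect.

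The main obstacle I anticipate is part (1): the existence of a perfect complex on $\sX\times Y$ with the correct support requires first knowing that the pair $(Y,W)$ admits a perfect complex supported on $\lvert W\rvert$, which for a general quasi-compact quasi-separated algebraic space $Y$ is the content of the Thomason–Trobaugh / Hall–Rydh machinery (one reduces to a finitely presented closed subspace and applies \cite[Lemma 4.10]{HR-2}), and then checking that the external tensor product of two perfect complexes remains perfect and has the product support — the support claim is straightforward on points, but one should be a little careful that $\lvert\sZ\times W\rvert$ really is $\lvert\sZ\rvert\times\lvert W\rvert$ as a closed subset, which holds because the underlying topological space of a fiber product of stacks surjects onto the fiber product of underlying spaces. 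The compact-generation statement for the product, while morally a Künneth formula, is the other place where I would lean on an external input rather than prove it from scratch; depending on how much of \cite{HR-2} or \cite{BZFN} the authors are willing to cite, this could be a one-line reference or require a short stratification argument over $Y$ using Nisnevich descent (\propref{prop:Excision}-style gluing) and the scheme case.
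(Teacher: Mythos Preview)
Your outline is correct and matches the paper's proof almost step for step: for both parts you verify compactness of the structure sheaf via \lemref{lem:Concentrated}, compact generation of $D_\qc$, and existence of a perfect complex with the right support by pulling back (and tensoring, in part (1)). Part (2) is identical to the paper's argument, and in part (1) the paper likewise cites \cite[Theorem~A]{HR-2} for the perfect complex $\sQ$ on $Y$ and forms $\pi_1^*\sP\otimes\pi_2^*\sQ$.

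The one place where the paper is more precise than your proposal is the compact generation of $D_\qc(\sX\times Y)$. Rather than citing \cite{BZFN} or \cite{HR-2} (the K\"unneth results there typically assume affine diagonal, which is not part of the hypotheses here), the paper argues directly: since $Y$ is a qcqs algebraic space, $\mathsf D_\qc(Y)$ is dualizable as a presentable dg-category \cite[\S9.4]{SAG}, so $(-)\otimes\mathsf D_\qc(Y)$ preserves homotopy limits; writing $\mathsf D_\qc(\sX)$ as a homotopy limit over affines then reduces the K\"unneth equivalence $\mathsf D_\qc(\sX\times Y)\simeq\mathsf D_\qc(\sX)\otimes\mathsf D_\qc(Y)$ to the affine case \cite[Corollary~9.4.2.4]{SAG}, and a tensor product of compactly generated categories is compactly generated. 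Your stratification-over-$Y$ idea would also work, but the dualizability argument is cleaner and avoids any diagonal hypotheses.
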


\begin{proof}
	Let $\sP$ be a perfect complex on $\sX$ with support $\lvert\sZ\rvert$.
	
	(1) By \cite[Theorem A]{HR-2}, there exists a perfect complex $\sQ$ on $Y$ with support $\lvert W\rvert$.
	Then $\pi_1^*(\sP)\otimes\pi_2^*(\sQ)$ is a perfect complex on $\sX\times Y$ with support $\lvert\sZ\times W\rvert$. Since the projection $\pi_1:\sX\times Y\to\sX$ is representable, $\sO_{\sX\times Y}$ is compact by \lemref{lem:Concentrated}.
	It remains to show that $D_{\qc}(\sX\times Y)$ is compactly generated. 
	We claim that there is an equivalence of presentable dg-categories
	\begin{equation}\label{eqn:DqcKunneth}
	\mathsf D_{\qc}(\sX\times Y)\simeq \mathsf D_{\qc}(\sX)\otimes \mathsf D_{\qc}(Y).
	\end{equation}
	Since the tensor product of compactly generated dg-categories is compactly generated, this will complete the proof. 
	Since $Y$ is a quasi-compact and quasi-separated algebraic space, the dg-category $\mathsf D_{\qc}(Y)$ is dualizable \cite[\S9.4]{SAG}, and hence tensoring with $\mathsf D_{\qc}(Y)$ preserves homotopy limits. Since $\mathsf D_{\qc}(-)$ is the homotopy right Kan extension of its restriction to affine schemes, we are reduced to proving~\eqref{eqn:DqcKunneth} when $\sX$ is an affine scheme, in which case it is a special case of \cite[Corollary 9.4.2.4]{SAG}.
	
	(2) The perfect complex $f^*(\sP)$ has support $\lvert\sY\times_\sX\sZ\rvert$.
	By \lemref{lem:Concentrated}, $\sO_\sY$ is compact. 
	It remains to show that $D_{\qc}(\sY)$ is compactly generated, but this follows from \lemref{lem:Quasi-proj}.
\end{proof}

\begin{prop}\label{prop:Localization}
	Let $(\sX,\sZ)$ be a perfect pair and let $j:\sU\inj\sX$ be the open immersion complement to $\sZ$. Then
	\[
	j^*: \frac{D_{\perf}(\sX)}{D_{\perf,\sZ}(\sX)} \to D_{\perf}(\sU)
	\]
	is an equivalence of triangulated categories, up to direct factors.
\end{prop}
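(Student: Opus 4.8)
The plan is to deduce this from the compact generation results already established and the general machinery of localization for compactly generated triangulated categories due to Neeman and Thomason--Trobaugh. By \propref{prop:cpt-obj-D_(qc)(X on Z)}, the pair $(\sX,\sZ)$ being perfect tells us that $D_{\qc}(\sX)$ is compactly generated with compact objects exactly $D_{\perf}(\sX)$, and that $D_{\qc,\sZ}(\sX)$ is compactly generated with compact objects exactly $D_{\perf,\sZ}(\sX)$. First I would record the localization sequence of triangulated categories
\[
D_{\qc,\sZ}(\sX)\to D_{\qc}(\sX)\xrightarrow{j^*} D_{\qc}(\sU),
\]
i.e.\ that $j^*$ is a Bousfield localization with kernel $D_{\qc,\sZ}(\sX)$. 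This is standard once one knows that $j^*$ has a fully faithful right adjoint $j_*$ (which holds because $j$ is an open immersion, in particular quasi-compact quasi-separated and representable, so concentrated by \lemref{lem:Concentrated}) and that the counit $j^*j_*\to\id$ is an equivalence; the kernel of $j^*$ is then by definition $D_{\qc,\sZ}(\sX)$.

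Next I would invoke the Neeman--Thomason localization theorem (\cite[Theorem 2.1]{Neeman-1}, or \cite[Theorem 2.6.3]{TT}): if $\sT$ is a compactly generated triangulated category, $\sS\subset\sT$ is a localizing subcategory generated by a set of objects that are compact in $\sT$, then $\sS$ is compactly generated, the compact objects of $\sS$ are $\sS\cap\sT^c$, and the quotient $\sT/\sS$ is compactly generated with $(\sT/\sS)^c$ equal to the idempotent completion of the image of $\sT^c$. Applying this with $\sT=D_{\qc}(\sX)$ and $\sS=D_{\qc,\sZ}(\sX)$ — legitimate since $D_{\qc,\sZ}(\sX)$ is generated by perfect complexes supported on $\sZ$, which are compact in $D_{\qc}(\sX)$ — we get that $D_{\qc}(\sX)/D_{\qc,\sZ}(\sX)$ is compactly generated, its compact objects are the idempotent completion of the image of $D_{\perf}(\sX)$, and this quotient is identified with $D_{\qc}(\sU)$ via $j^*$. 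Passing to compact objects on both sides: the compact objects of $D_{\qc}(\sU)$ are $D_{\perf}(\sU)$ (again by \propref{prop:cpt-obj-D_(qc)(X on Z)}, as $(\sU,\emptyset)$ is perfect — $\sU$ is perfect since it is an open substack of the perfect stack $\sX$, using that $\sO_\sU=j^*\sO_\sX$ is compact by the adjunction argument and that open immersions preserve compact generation), while the compact objects of the quotient are the idempotent completion of $D_{\perf}(\sX)/D_{\perf,\sZ}(\sX)$. Therefore $j^*$ induces an equivalence
\[
\left(\frac{D_{\perf}(\sX)}{D_{\perf,\sZ}(\sX)}\right)^{\natural}\xrightarrow{\ \sim\ } D_{\perf}(\sU),
\]
where $(-)^{\natural}$ denotes idempotent completion; this is precisely the assertion that $j^*$ is an equivalence up to direct factors.

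The routine verifications are the ones I would spell out carefully: that $j^*$ is a localization with the stated kernel (the $j_*$ adjunction and the projection formula), and that $\sU$ inherits perfectness from $\sX$. The only genuinely substantive input is the identification of compact objects — i.e.\ \propref{prop:cpt-obj-D_(qc)(X on Z)} — which has already been done, so there is no real obstacle here; the proposition is essentially a formal consequence of the compact generation package assembled above, exactly as in Thomason--Trobaugh's treatment for schemes, with \lemref{lem:Concentrated} supplying the one point (compactness of the structure sheaf, concentratedness of representable morphisms) where the stacky setting could have caused trouble.
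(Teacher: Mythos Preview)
Your proposal is correct and follows essentially the same route as the paper: establish the localization sequence $D_{\qc,\sZ}(\sX)\to D_{\qc}(\sX)\to D_{\qc}(\sU)$ via full faithfulness of $j_*$, invoke compact generation of all three categories from \propref{prop:cpt-obj-D_(qc)(X on Z)} (together with perfectness of $\sU$, which the paper gets from \propref{prop:perfect}(2)), and then pass to compact objects using the Neeman--Thomason machinery. The only cosmetic difference is that the paper cites Krause \cite[Proposition 4.9.1, Theorem 5.6.1]{Krause} for the localization and passage-to-compacts steps, whereas you cite \cite{Neeman-1} and \cite{TT}; the content is the same.
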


\begin{proof}
	For any pair $(\sX,\sZ)$, we have an equivalence of triangulated categories
	\[
	j^*: \frac{D_{\qc}(\sX)}{D_{\qc,\sZ}(\sX)} \to D_{\qc}(\sU).
	\]
	Indeed, the functor $j_*: D_{\qc}(\sU)\to D_{\qc}(\sX)$ is fully faithful by flat base change,
	so $j_*j^*$ is a localization endofunctor of $D_{\qc}(\sX)$ whose kernel is $D_{\qc,\sZ}(\sX)$
	by definition. The claim now follows from \cite[Proposition 4.9.1]{Krause}.
	If $(\sX,\sZ)$ is perfect, then $\sU$ is also perfect by \propref{prop:perfect} (2).
	By \propref{prop:cpt-obj-D_(qc)(X on Z)}, all three categories are compactly generated and their subcategories
	of compact and perfect objects coincide. We conclude using \cite[Theorem 5.6.1]{Krause}.
\end{proof}

\begin{prop}\label{prop:continuity-perf}
	Suppose that $\sX$ is the limit of a filtered diagram $(\sX_\alpha)$ of perfect stacks with affine transition morphisms.
	Then $\sX$ is perfect and the canonical map
	\begin{equation}\label{eqn:limDperf}
	\hocolim_\alpha \mathsf D_\perf(\sX_\alpha) \to \mathsf D_\perf(\sX)
	\end{equation}
	is a weak equivalence of dg-categories.
\end{prop}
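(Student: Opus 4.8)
The plan is to reduce the statement about stacks to the known case of affine schemes using the fact that $\mathsf D_\perf$ is the homotopy right Kan extension of its restriction to $\mathbf{Aff}$, hence sends homotopy colimits of simplicial presheaves to homotopy limits of dg-categories. First I would establish that $\sX$ is perfect. The limit $\sX=\lim_\alpha\sX_\alpha$ along affine transition morphisms is, in particular, a filtered limit along affine (hence representable) morphisms, so by \lemref{lem:Concentrated} applied to the projection $\sX\to\sX_\alpha$ we get that $\sO_\sX$ is compact once we know $\sO_{\sX_\alpha}$ is, which holds since each $\sX_\alpha$ is perfect. Compact generation of $D_\qc(\sX)$ is the more substantial point: I would invoke the continuity of $\mathsf D_\qc$ along such limits, i.e. $\mathsf D_\qc(\sX)\simeq\hocolim_\alpha\mathsf D_\qc(\sX_\alpha)$ as presentable dg-categories. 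This itself follows from the fact that $\mathsf D_\qc$ is a right Kan extension from $\mathbf{Aff}$ (so it commutes with the cofiltered limit of presheaves $\sX=\lim\sX_\alpha$, which in the relevant sense is a filtered homotopy colimit), together with the dual statement for affine schemes, namely $D_\qc(\Spec A)\simeq\hocolim D_\qc(\Spec A_i)$ when $A=\colim A_i$, which is classical (Thomason--Trobaugh, or \cite{SAG}). A filtered homotopy colimit of compactly generated dg-categories along functors preserving compact objects is again compactly generated, giving compact generation of $D_\qc(\sX)$.

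Next I would address the equivalence~\eqref{eqn:limDperf} itself. Since $\mathsf D_\qc(\sX)\simeq\hocolim_\alpha\mathsf D_\qc(\sX_\alpha)$ and the transition functors are the pullbacks $g_{\alpha\beta}^*$ (which preserve perfect = dualizable objects), the subcategory of compact objects of a filtered homotopy colimit of compactly generated dg-categories is the filtered homotopy colimit of the subcategories of compact objects. By \propref{prop:cpt-obj-D_(qc)(X on Z)}, the compact objects of $D_\qc(\sX_\alpha)$ are exactly the perfect complexes, i.e. $\mathsf D_\perf(\sX_\alpha)=\mathsf D_\qc(\sX_\alpha)^\omega$, and likewise $\mathsf D_\perf(\sX)=\mathsf D_\qc(\sX)^\omega$ since $\sX$ is perfect. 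Passing to compact objects in $\mathsf D_\qc(\sX)\simeq\hocolim_\alpha\mathsf D_\qc(\sX_\alpha)$ then yields precisely $\hocolim_\alpha\mathsf D_\perf(\sX_\alpha)\xrightarrow{\ \sim\ }\mathsf D_\perf(\sX)$, which is the claim. Alternatively, one can argue directly from the Kan-extension property: $\mathsf D_\perf$ also is the homotopy right Kan extension of $\mathsf D_\perf|_{\mathbf{Aff}}$ (as recalled in the text, because passing to dualizable objects preserves homotopy limits of dg-categories), so it commutes with the limit of presheaves $\sX=\lim\sX_\alpha$, reducing~\eqref{eqn:limDperf} to the affine case $\mathsf D_\perf(\Spec A)\simeq\hocolim_i\mathsf D_\perf(\Spec A_i)$ for $A=\colim_i A_i$, which is \cite[Theorem 2.3.1 (iii)]{TT} enhanced to dg-categories (or \cite{SAG}).

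The main obstacle is bookkeeping rather than a deep idea: one must be careful that "limit of perfect stacks with affine transition morphisms" is exactly the kind of limit along which the right Kan extension $\mathsf D_\qc$ (and $\mathsf D_\perf$) is continuous, i.e. that in $\mathrm{sPre}(\mathbf{Aff})$ the cofiltered limit $\lim_\alpha\sX_\alpha$ of the representable-by-stacks presheaves presents the object whose value under the Kan extension is the filtered homotopy colimit $\hocolim_\alpha\mathsf D_\qc(\sX_\alpha)$. Concretely, I would pick a smooth representable surjection $U_{\alpha_0}\to\sX_{\alpha_0}$ from an affine scheme for some index $\alpha_0$, base-change it to get a compatible system $U_\alpha\to\sX_\alpha$ of affine (or at least nice) atlases with $U=\lim_\alpha U_\alpha$ affine, and descend the statement from the $U_\alpha$ (where it is Thomason--Trobaugh continuity) to the $\sX_\alpha$ via the fpqc descent spectral sequence for $\mathsf D_\qc$, using that each $U_\alpha\to\sX_\alpha$ has bounded cohomological dimension uniformly in $\alpha$ (\lemref{lem:Concentrated}). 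Checking this uniformity and the compatibility of the descent presentations across the diagram is the only place requiring genuine care; everything else is a formal manipulation of homotopy (co)limits of presentable dg-categories.
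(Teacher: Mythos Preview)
Your endgame---choose a smooth cover of one $\sX_{\alpha_0}$, pull it back along the affine transition maps, and reduce to Thomason--Trobaugh continuity for schemes---is exactly what the paper does. Two points of your exposition, however, need correction.

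First, the justification ``$\mathsf D_\qc$ is a right Kan extension from $\mathbf{Aff}$, so it commutes with the cofiltered limit $\sX=\lim\sX_\alpha$, which in the relevant sense is a filtered homotopy colimit'' does not work as stated. The Kan-extension property gives $\mathsf D_\qc(\hocolim Y_i)\simeq\holim\mathsf D_\qc(Y_i)$ for homotopy \emph{colimits} in $\mathrm{sPre}(\mathbf{Aff})$, but an inverse limit of stacks along affine morphisms is not such a colimit, so nothing formal applies. You correctly flag this as the ``main obstacle'' and propose the right fix. The paper carries it out as follows: pick a smooth hypercover $U_\bullet\to\sX_{\alpha_0}$ by schemes (a single affine atlas need not exist), so that by fpqc descent each $\mathsf D_\qc(\sX_\alpha)$ is the totalization $\holim_\Delta\mathsf D_\qc(U_\bullet\times_{\sX_{\alpha_0}}\sX_\alpha)$, and by flat base change the levels are precisely $\mathsf D_\qc$ of the base-changed schemes. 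The comparison map for stacks is then a $\holim_\Delta$ of comparison maps for schemes, and one is reduced to \cite[Proposition~3.20]{TT}. No spectral sequence or uniform cohomological-dimension bound is needed; it is a formal commutation of homotopy limits.

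Second, you deduce compact generation of $D_\qc(\sX)$ from the very continuity statement you are trying to prove, which makes the argument circular as written. The paper avoids this: the projection $\sX\to\sX_{\alpha_0}$ is affine, hence schematic with a relatively ample family of line bundles, so $\sX$ is perfect immediately by \propref{prop:perfect}~(2). With perfectness of $\sX$ and of each $\sX_\alpha$ in hand, \propref{prop:cpt-obj-D_(qc)(X on Z)} identifies compact with perfect objects throughout, and the standard duality between filtered colimits of small dg-categories and cofiltered limits of their Ind-completions (the cited results from \cite{HTT} and \cite{HA}) reduces \eqref{eqn:limDperf} to showing that $\mathsf D_\qc(\sX)\to\holim_\alpha\mathsf D_\qc(\sX_\alpha)$ is an equivalence, handled as above.
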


\begin{proof}
It follows from \propref{prop:perfect} (2) that $\sX$ is perfect.
By \propref{prop:cpt-obj-D_(qc)(X on Z)}, 
$\mathsf D_\qc(\sX)$ is compactly generated and $\mathsf D_\qc(\sX)^c=\mathsf D_\perf(\sX)$,
and similarly for each $\sX_\alpha$.
Since the pullback functors $\mathsf D_\qc(\sX_\alpha)\to\mathsf D_\qc(\sX_\beta)$
preserve compact objects, it follows from \cite[Propositions 5.5.7.6 and 5.5.7.8]{HTT} and
\cite[Lemma 7.3.5.10]{HA} that 
 \eqref{eqn:limDperf} is a weak equivalence if and only if the canonical map
 \begin{equation}\label{eqn:limDqc}
 	\mathsf D_\qc(\sX) \to \holim_\alpha \mathsf D_\qc(\sX_\alpha)
 \end{equation}
 is a weak equivalence.
 Choosing a smooth hypercover of some $\sX_\alpha$ by schemes
and using flat base change, we see that the map~\eqref{eqn:limDqc} is the homotopy limit
of a cosimplicial diagram of similar maps with $\sX_\alpha$ replaced by a scheme.
Hence, it suffices to prove that~\eqref{eqn:limDperf}
is a weak equivalence when $\sX_\alpha$ is a scheme, but this follows from \cite[Proposition 3.20]{TT}.
\end{proof}

We now state the following two results of Hall and Rydh,
which provide many examples of perfect stacks.

\begin{thm}[Hall--Rydh]\label{thm:HRydh-1}
	Let $\sX$ be a stack satisfying one of the following properties.
	\begin{enumerate}
		\item $\sX$ has characteristic zero.
		\item $\sX$ has linearly reductive almost multiplicative stabilizers.
		\item $\sX$ has finitely presented inertia and
		linearly reductive almost multiplicative stabilizers at points of
		positive characteristic.
	\end{enumerate}
	Then $\sO_\sX$ is compact in $D_\qc(\sX)$.
\end{thm}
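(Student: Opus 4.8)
The statement is the assertion that $\sX$ is \emph{concentrated}. The plan is to reduce it to a uniform bound on the cohomological dimension of $\sX$ for quasi-coherent sheaves, and then to verify such a bound Nisnevich-locally by means of the structure theorem \thmref{thm:Nis-local-aff}. For the first step, recall that since $\sX$ is quasi-compact and quasi-separated, the functor $R\Gamma(\sX,-)\colon D_{\qc}(\sX)\to D(\Z)$ commutes with filtered colimits of quasi-coherent sheaves. Consequently, if there is an integer $N$ with $H^i(\sX,\sF)=0$ for all $i>N$ and all $\sF\in\QC(\sX)$, then --- comparing, via the hypercohomology spectral sequence (which converges by virtue of this bound), the cohomology of an arbitrary small coproduct of objects of $D_{\qc}(\sX)$ with the coproduct of their cohomologies --- one concludes that $\Hom_{D_{\qc}(\sX)}(\sO_\sX,-)=R\Gamma(\sX,-)$ preserves all small coproducts; that is, $\sO_\sX$ is compact. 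So it suffices to produce such an $N$.

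Next I would glue along a finite Nisnevich cover. Suppose that for every point $x$ of $\sX$ there is an \'etale morphism $\sV_x\to\sX$ through which the residual gerbe $\eta_x$ factors, with $\sV_x$ and each of its quasi-compact open substacks of cohomological dimension at most some fixed $N_0$. Then $\{\sV_x\to\sX\}$ is a Nisnevich covering, so by \corref{cor:Nis-qc} a finite subfamily $\sV_1,\dots,\sV_r$ is already a Nisnevich covering, and by \propref{prop:Nis-cd} there is a chain of quasi-compact open substacks $\emptyset=\sX_0\subset\dots\subset\sX_r=\sX$ whose successive inclusions fit into Nisnevich squares built from the $\sV_j$. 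The Mayer--Vietoris long exact sequence attached to a Nisnevich square, together with induction on $i$, then bounds the cohomological dimension of $\sX$ by $N_0+r$; here one uses that the fibre products occurring in these squares are open substacks of the $\sV_j$ and hence also have cohomological dimension $\le N_0$. It therefore remains to construct the local models $\sV_x$.

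In cases (2) and (3) at the points of positive characteristic, \thmref{thm:Nis-local-aff} provides exactly such a model: $\sV_x=[U/G]$ with $U$ an affine scheme and $G$ a linearly reductive almost multiplicative group scheme over an affine base. Since $G$ is linearly reductive, the functor of $G$-invariants on quasi-coherent sheaves is exact, so $R\Gamma([U/G],-)$ is the composite of this exact functor with $R\Gamma(U,-)$; as $U$ (or, for an open substack, a quasi-affine scheme) has finite cohomological dimension, uniformly bounded over the finite cover, one obtains a suitable $N_0$. For case (1) and for the characteristic-zero points in case (3) the stabilizers are no longer assumed linearly reductive, and here one invokes the more general local structure theorem of Alper--Hall--Rydh to write $\sX$ \'etale-locally as $[U/G]$ with $G\to S$ a smooth affine group scheme. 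The key point is then that $[U/G]$ has \emph{finite} cohomological dimension, bounded by $\dim U$ plus the dimension $d$ of the unipotent radical of $G$: the reductive Levi quotient is linearly reductive over a base of characteristic zero and hence contributes $0$, while a smooth connected unipotent group over $\Q$ is an iterated extension of copies of $\G_a$, and $H^{>1}(\G_a,-)$ vanishes in characteristic zero, so its classifying stack has cohomological dimension $\le d$. Quasi-compactness (reducing to the Noetherian case by a standard limit argument if necessary), together with finite presentation of the inertia in case (3), supplies the uniform bound on $d$ needed for the gluing step.

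The two genuinely non-formal ingredients are the \'etale-local presentations and the characteristic-zero vanishing. The version of \thmref{thm:Nis-local-aff} quoted above handles only linearly reductive almost multiplicative stabilizers, so case (1) really depends on the full Alper--Hall--Rydh local structure theorem (and, implicitly, on the stabilizers being affine, the abelian-variety part of Chevalley's decomposition having to be treated separately); I expect this to be the main obstacle, as establishing good enough \'etale-local models for arbitrary stabilizers is the hard input. The vanishing $H^{>1}(\G_a,-)=0$ over $\Q$ is exactly what fails in positive characteristic --- which is precisely why cases (2) and (3) must impose linear reductivity at the bad points, $\G_a$ in characteristic $p$ being the classical example of a non-concentrated classifying stack. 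Finally, arranging all of these cohomological-dimension estimates uniformly in families, rather than one point at a time, is the fiddly technical part, and is where the finite-presentation hypotheses are used.
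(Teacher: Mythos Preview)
The paper does not prove this result; it cites \cite[Theorem~2.1]{HR-1}. Your proposal is therefore an independent proof attempt, and the reduction to a uniform cohomological-dimension bound together with the Nisnevich gluing are correct. The gap is in producing the local models $\sV_x$ in case~(1) (and at the characteristic-zero points of case~(3)). For case~(2) and the positive-characteristic points of~(3), \thmref{thm:Nis-local-aff} applies and your argument goes through. But for case~(1) you invoke a ``more general local structure theorem of Alper--Hall--Rydh'' presenting $\sX$ \'etale-locally as $[U/G]$ with $G$ merely smooth affine. No such theorem is available: every form of the Alper--Hall--Rydh slice theorem requires the stabilizer at the chosen point to be \emph{linearly reductive}. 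In characteristic zero this means reductive, so a point with stabilizer $\G_a$ --- whose classifying stack you correctly note has cohomological dimension~$1$ --- falls outside the scope of AHR, and there is no \'etale-local quotient presentation to appeal to. You flag this yourself as ``the main obstacle,'' and it is not a technicality but the missing idea.

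The argument in \cite{HR-1} avoids \'etale-local quotient charts. The representation-theoretic input you sketch --- finite cohomological dimension of $\sB G$ for $G$ an affine group scheme of finite type over a characteristic-zero field, via Levi decomposition and the filtration of the unipotent radical by copies of $\G_a$ --- is indeed the right ingredient; the passage from classifying stacks to general stacks is carried out by a different d\'evissage that does not depend on a slice theorem. So your proof is on the right track as far as the inputs go, but the reduction step you propose does not exist in the required generality.
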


\begin{proof}
	See \cite[Theorem 2.1]{HR-1}.
\end{proof}

\begin{thm}[Hall--Rydh]\label{thm:HRydh-ex}
Let $\sX$ be a stack satisfying the following properties.
\begin{enumerate}
\item
$\sO_\sX$ is compact in $D_\qc(\sX)$.
\item
There exists a faithfully flat, representable, separated and quasi-finite
morphism $f:\sX' \to \sX$ of finite presentation such that $\sX'$ has affine stabilizers and satisfies the
resolution property.
\end{enumerate}
Then, for every closed substack $\sZ\subset\sX$, the pair $(\sX,\sZ)$ is perfect.
\end{thm}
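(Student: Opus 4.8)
The statement asks for two things about the pair $(\sX,\sZ)$: that $\sX$ is perfect --- equivalently, since $\sO_\sX$ is compact by hypothesis, that $D_\qc(\sX)$ is compactly generated --- and that there is a perfect complex on $\sX$ with support $\lvert\sZ\rvert$. The plan is to check both statements on $\sX'$ by hand, exploiting the resolution property there, and then transport them to $\sX$ by descent along the faithfully flat morphism $f$.

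I would begin with the base case on $\sX'$. Since $f$ is representable, $\sO_{\sX'}$ is compact by \lemref{lem:Concentrated}. As $\sX'$ has the resolution property and affine stabilizers, $D_\qc(\sX')$ is generated, as a localizing subcategory, by its vector bundles; every vector bundle is dualizable, hence compact once $\sO_{\sX'}$ is compact, so $D_\qc(\sX')$ is compactly generated and $\sX'$ is perfect. To obtain a perfect complex on $\sX'$ with support $\lvert\sZ'\rvert$, where $\sZ':=\sX'\times_\sX\sZ$, I would first replace $\sZ$ by a finitely presented closed substack with the same underlying set --- one exists because $\sX$ is quasi-compact and quasi-separated, cf.\ \cite[Proposition~8.2]{Rydh-approx} --- pull its ideal back to a finitely generated ideal $\sI'\subset\sO_{\sX'}$, choose a map $\sE'\to\sO_{\sX'}$ from a vector bundle with image $\sI'$ using the resolution property, and let $\sP'$ be its Koszul complex. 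Then $\sP'$ is perfect and $\supp\sP'=V(\sI')=f^{-1}(\lvert\sZ\rvert)$.

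Next comes the descent along $f$. By Zariski's Main Theorem the hypotheses on $f$ force it to be quasi-affine, and by \lemref{lem:Concentrated} it is concentrated, so $f_*\colon D_\qc(\sX')\to D_\qc(\sX)$ preserves small coproducts and possesses a right adjoint $f^!$. The key point --- and, I expect, the main obstacle --- is to identify $f^!$: because $f$ is flat, finitely presented and has $0$-dimensional fibres, it is Cohen--Macaulay of relative dimension $0$, so $\omega_f:=f^!\sO_\sX$ should be a single finitely presented $\sO_{\sX'}$-module, the projection formula should give $f^!(\sF)\simeq f^*(\sF)\otimes_{\sO_{\sX'}}\omega_f$, and $\omega_f$ has full support since $f$ is faithfully flat. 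Granting this, $f^!$ preserves coproducts, so $f_*$ preserves compact objects by \cite[Theorem~5.1]{Neeman-1}, and $f^!$ is conservative, because $f^*$ is (faithful flatness) and tensoring with a finitely presented module of full support is conservative. Now for each perfect complex $P'$ on $\sX'$ the object $f_*P'$ is compact on $\sX$, and the adjunction isomorphism $\Hom_{D_\qc(\sX)}(f_*P'[n],\sF)\cong\Hom_{D_\qc(\sX')}(P'[n],f^!\sF)$ --- together with the compact generation of $D_\qc(\sX')$ and the conservativity of $f^!$ --- shows that the family $\{f_*P'\}$ generates $D_\qc(\sX)$; thus $\sX$ is perfect. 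Since $\sO_\sX$ is itself a perfect complex with support $\lvert\sX\rvert$, the pair $(\sX,\sX)$ is then perfect, and \propref{prop:cpt-obj-D_(qc)(X on Z)} identifies perfect complexes on $\sX$ with compact objects of $D_\qc(\sX)$; hence $\sP:=f_*\sP'$ is perfect on $\sX$. Finally $\supp\sP=\lvert\sZ\rvert$: restriction along the open immersion $\sX\setminus\lvert\sZ\rvert\inj\sX$ and flat base change give $\sP|_{\sX\setminus\lvert\sZ\rvert}=0$ since $\supp\sP'=f^{-1}(\lvert\sZ\rvert)$, while for $x\in\lvert\sZ\rvert$, flat base change along $\Spec\kappa(x)\to\sX$ (legitimate since $\sX'$ is flat over $\sX$) identifies the derived fibre of $\sP$ at $x$ with the cohomology of the restriction of $\sP'$ to the nonempty finite fibre $f^{-1}(x)$, which is nonzero because $f^{-1}(x)\subseteq f^{-1}(\lvert\sZ\rvert)=\supp\sP'$; so $x\in\supp\sP$.

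To summarise where the difficulty lies: the base case is a routine use of the resolution property, and once $f_*$ is known to preserve compact objects the descent of compact generation and the computation of supports are formal. The real work is the relative-duality input of the third paragraph --- identifying $f^!$ for a representable, separated, quasi-finite, flat, finitely presented morphism of \emph{stacks} as $f^*(-)\otimes\omega_f$ with $\omega_f$ finitely presented of full support. Over schemes this is classical Cohen--Macaulay duality; over stacks I would deduce it from the representable case by faithfully flat descent of the relative dualizing complex, and it is here that one must be careful.
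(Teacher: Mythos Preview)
Your overall plan---verify everything on $\sX'$ and then descend along $f$---is exactly how the paper proceeds, but the paper does not carry out the descent by hand. It notes that $\sX'$ has affine diagonal by \cite[Theorem~A]{Gross} (resolution property plus affine stabilizers), invokes \cite[Proposition~8.4]{HR-2} to conclude that $\sX'$ is \emph{crisp}, and then cites \cite[Theorem~C]{HR-2}, which says crispness descends along quasi-finite faithfully flat separated maps of finite presentation; perfection of every pair $(\sX,\sZ)$ is built into the definition of crispness. Your treatment of the base case on $\sX'$ is essentially fine.

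The descent step, however, has a genuine gap. You define $f^!$ as the right adjoint of $f_*$ and then assert $f^!(\sF)\simeq f^*(\sF)\otimes\omega_f$ with $\omega_f$ a finitely presented sheaf in degree~$0$. That formula describes the \emph{Grothendieck-duality} upper-shriek, which is right adjoint to $f_!$; for non-proper $f$ these functors differ. Concretely, take $\sX=\A^1_k$ and $\sX'=\A^1_k\sqcup\G_{m,k}$, with $f$ the identity on the first summand and the open immersion $j$ on the second. This $f$ satisfies every hypothesis of the theorem, yet $f_*\sO_{\sX'}=\sO_{\A^1}\oplus j_*\sO_{\G_m}$ is not perfect. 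Hence $f_*$ does \emph{not} preserve compact objects, its honest right adjoint does not preserve coproducts, and that right adjoint applied to $\sO_\sX$ is not a finitely presented sheaf. So neither the compact-generation step nor the perfection of $f_*\sP'$ follows from your argument.

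What makes \cite[Theorem~C]{HR-2} work is an \'etale localisation on $\sX$: by Zariski's Main Theorem one can reduce, Nisnevich-locally, to a \emph{finite} flat cover, and for finite flat $f$ of finite presentation $f_*\sO_{\sX'}$ is locally free, so the right adjoint $f^\times(-)=\sHom_{\sO_\sX}(f_*\sO_{\sX'},-)$ visibly preserves coproducts and $f_*$ preserves perfects. One then glues the locally produced compact generators and supported perfect complexes via a Mayer--Vietoris argument. You were right to flag the duality input as the crux; the point is that the formula you wrote is simply false for quasi-finite non-proper $f$, and the repair requires this localisation-and-gluing machinery rather than a single global pushforward.
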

\begin{proof}
By \lemref{lem:Concentrated}, $\sO_{\sX'} = f^*(\sO_{\sX})$ is compact in $D_{\qc}(\sX')$.
Since $\sX'$ has affine stabilizers and satisfies the resolution property, it has affine diagonal
by \cite[Theorem 1.1]{Gross}.
Since moreover $\sO_{\sX'}$ is compact, it follows from
\cite[Proposition~8.4]{HR-2} that $\sX'$ is {\sl crisp}.
We now apply \cite[Theorem~C]{HR-2} to conclude that $\sX$ is also crisp.
By definition of crispness, this implies that $(\sX,\sZ)$ is perfect.
\end{proof}

\begin{cor}\label{cor:CP-exm}
Let $\sX$ be a quasi-DM stack with separated diagonal and linearly reductive stabilizers. 
Then, for every closed substack $\sZ\subset\sX$, the pair $(\sX,\sZ)$ is perfect.
\end{cor}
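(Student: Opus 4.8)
The plan is to deduce this from \thmref{thm:HRydh-ex} by checking its two hypotheses. First I would verify hypothesis (1): since $\sX$ is quasi-DM its stabilizers are quasi-finite, hence affine (a quasi-finite separated group scheme over a field is finite, hence affine), so in particular they are linearly reductive almost multiplicative group schemes---indeed a finite linearly reductive group scheme is trivially an extension of a finite étale group scheme (itself) by the trivial group of multiplicative type. Therefore \thmref{thm:HRydh-1}(2) applies and $\sO_\sX$ is compact in $D_\qc(\sX)$.

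The main work is producing the morphism $f\colon\sX'\to\sX$ required in hypothesis (2) of \thmref{thm:HRydh-ex}: a faithfully flat, representable, separated, quasi-finite morphism of finite presentation with $\sX'$ having affine stabilizers and satisfying the resolution property. The idea is to build $\sX'$ locally using \thmref{thm:Nis-local-aff} and then glue. Concretely, for each point $x\in\sX$, since the stabilizer at $x$ is linearly reductive finite (hence almost multiplicative), \thmref{thm:Nis-local-aff} gives an étale morphism $f_x\colon[U_x/G_x]\to\sX$ hitting the residual gerbe $\eta_x$ isomorphically, with $U_x$ affine; as $\sX$ has separated diagonal, one may further take $f_x$ affine. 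A quotient of an affine scheme by a (finite, hence affine) group scheme satisfies the resolution property and has affine stabilizers. By quasi-compactness of $\sX$, finitely many of the $f_x$ form a Nisnevich covering (\corref{cor:Nis-qc}); letting $\sX'$ be their disjoint union and $f$ the induced map, $f$ is affine (hence representable and separated), étale (hence flat, quasi-finite, finitely presented), and surjective since a Nisnevich covering is in particular surjective. Each component of $\sX'$ has affine stabilizers and the resolution property, and these properties pass to the finite disjoint union.

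With both hypotheses of \thmref{thm:HRydh-ex} in hand, we conclude that $(\sX,\sZ)$ is perfect for every closed substack $\sZ\subset\sX$. The step I expect to be the main obstacle is the construction of $\sX'$: one has to be careful that \thmref{thm:Nis-local-aff} genuinely yields an \emph{affine} (not merely representable étale) morphism under the "separated diagonal" hypothesis, and that "quasi-DM with separated diagonal and linearly reductive stabilizers" is exactly the input needed---in particular that the finite stabilizers here are automatically linearly reductive in the almost multiplicative sense, so that \thmref{thm:Nis-local-aff} is applicable at every point. Everything else is a routine check that the stable properties (affine, étale, resolution property, affine stabilizers) are preserved under finite disjoint union and that a Nisnevich cover is faithfully flat.
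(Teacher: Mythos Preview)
Your construction of $\sX'$ does not go through as stated. You invoke the affine refinement in \thmref{thm:Nis-local-aff} under the hypothesis that $\sX$ has separated diagonal, but that theorem only yields an affine $f_x$ when $\sX$ has \emph{affine} diagonal. These conditions are genuinely different: two copies of $\A^2$ glued along $\A^2\setminus\{0\}$ is a scheme (hence quasi-DM) with separated diagonal but not affine diagonal. Without $f_x$ affine you lose the representability and separatedness needed for hypothesis~(2) of \thmref{thm:HRydh-ex}; an \'etale morphism of stacks need not be representable (already $\sB G\to\Spec k$ for $G$ nontrivial finite \'etale is a counterexample). There is also a smaller slip in your verification of hypothesis~(1): a finite linearly reductive group scheme is not in general \'etale (think of $\mu_p$ in characteristic $p$), so it is not ``trivially'' an extension of itself, as a finite \'etale group, by the trivial group. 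The correct fact, that a finite linearly reductive group scheme is almost multiplicative, requires the classification in \cite{AOV}.

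The paper's argument sidesteps \thmref{thm:Nis-local-aff} entirely. By the characterization of quasi-DM stacks \cite[Tag 06MC]{SP}, there exists an affine scheme $X$ with a faithfully flat, finitely presented, quasi-finite morphism $f\colon X\to\sX$. Any morphism from a scheme to a stack is representable, and $f$ is separated because $X$ is a separated scheme and $\sX$ has separated diagonal. Since an affine scheme has the resolution property and (trivially) affine stabilizers, hypothesis~(2) of \thmref{thm:HRydh-ex} is immediate; hypothesis~(1) comes from \thmref{thm:HRydh-1}. In effect you were rerunning the proof of \corref{cor:Nis-local-aff-1} under hypotheses too weak for that method, when the quasi-DM assumption already hands you the required cover by a single affine scheme.
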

\begin{proof}
Recall that a quasi-DM stack is a stack whose diagonal is quasi-finite. 
It follows from \cite[Tag 06MC]{SP} that a stack $\sX$ is quasi-DM if
and only if there exists an affine scheme $X$ and a faithfully flat map 
$f:X \to \sX$ of finite presentation which is quasi-finite.
Since the diagonal of $\sX$ is representable and separated, it follows that
$f$ is representable and separated.
Since $X$ is affine and hence has the resolution property,
the corollary follows from Theorems \ref{thm:HRydh-1} and \ref{thm:HRydh-ex}. 
\end{proof}

\begin{cor}\label{cor:Nis-local-aff-1}
Let $\sX$ be a stack with affine diagonal and linearly reductive almost multiplicative stabilizers.
Then, for every closed substack $\sZ\subset\sX$, $(\sX,\sZ)$ is perfect.
\end{cor}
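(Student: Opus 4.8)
The plan is to check the two hypotheses of \thmref{thm:HRydh-ex}. The first---that $\sO_\sX$ is compact in $D_\qc(\sX)$---holds by \thmref{thm:HRydh-1}(2), since $\sX$ has linearly reductive almost multiplicative stabilizers. It then remains to produce a faithfully flat, representable, separated, quasi-finite morphism $f\colon\sX'\to\sX$ of finite presentation such that $\sX'$ has affine stabilizers and satisfies the resolution property; given such an $f$, \thmref{thm:HRydh-ex} yields that $(\sX,\sZ)$ is perfect for every closed substack $\sZ\subset\sX$, which is the assertion.

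To construct $\sX'$, I would invoke \thmref{thm:Nis-local-aff} at each point $x\in\sX$. Since the stabilizer at a representative of $x$ is linearly reductive almost multiplicative and $\sX$ has affine diagonal, we obtain a morphism of affine schemes $U_x\to S_x$, a linearly reductive almost multiplicative group scheme $G_x$ over $S_x$ acting on $U_x$, and an affine étale morphism $f_x\colon[U_x/G_x]\to\sX$ realizing $\eta_x$ together with its residual gerbe. In particular $\{f_x\}_{x\in\sX}$ is a Nisnevich covering of $\sX$ (\defref{defn:Nis-top}), so by \corref{cor:Nis-qc} there is a finite set $J$ of points such that $\{f_x\}_{x\in J}$ is already a Nisnevich covering. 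I set $\sX'=\coprod_{x\in J}[U_x/G_x]$ and $f=\coprod_{x\in J}f_x$.

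Next I would verify the required properties. As a finite coproduct of affine étale morphisms, $f$ is affine---hence representable, separated, and of finite presentation---and quasi-finite; it is surjective because a Nisnevich covering is, hence faithfully flat. Each $G_x$ is affine (an extension of an affine group scheme by an affine group scheme is affine, as a torsor under an affine group scheme is an affine morphism), so the stabilizers of $[U_x/G_x]$ are closed subgroup schemes of base changes of $G_x$, hence affine; thus $\sX'$ has affine stabilizers. For the resolution property, note that the $G_x$-equivariant morphism $U_x\to S_x$ of affine schemes induces an affine morphism $[U_x/G_x]\to[S_x/G_x]=\sB G_x$; since $G_x$ is a nice group scheme over an affine scheme, $\sB G_x$ has the resolution property, and an affine morphism pulls back vector-bundle resolutions, so $[U_x/G_x]$ has the resolution property, and hence so does the finite coproduct $\sX'$.

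The main obstacle is the last step: confirming that $\sB G_x$ has the resolution property for $G_x$ a linearly reductive almost multiplicative group scheme over an affine base (which reduces to the embeddability of such group schemes into $\GL_n$ over an affine base; see \cite{Gross}), together with the inheritance of the resolution property along the affine morphism $[U_x/G_x]\to\sB G_x$. Everything else is a formal assembly of \thmref{thm:HRydh-1}, \thmref{thm:Nis-local-aff}, and \corref{cor:Nis-qc}.
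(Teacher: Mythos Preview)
Your strategy coincides with the paper's: verify hypothesis (1) of \thmref{thm:HRydh-ex} via \thmref{thm:HRydh-1}(2), and construct the covering $\sX'\to\sX$ needed for hypothesis (2) by assembling the local quotient charts from \thmref{thm:Nis-local-aff} into a finite Nisnevich cover via \corref{cor:Nis-qc}. The properties of $f$ (affine, hence representable, separated, quasi-finite, finitely presented; faithfully flat) and the affineness of stabilizers are verified the same way.

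The obstacle you isolate is real, and it is the one place where your argument is incomplete. You assert that $\sB G_x$ has the resolution property for $G_x$ a linearly reductive almost multiplicative group scheme over an arbitrary affine base $S_x$, but this is not available as stated: a group of multiplicative type over a general affine base need not embed in some $\GL_n$, so the resolution property for $\sB G_x$ (and hence for $[U_x/G_x]$) cannot be read off directly. The paper handles this with one extra move you are missing: before forming $\sX'$, it takes a further affine Nisnevich covering of each base $S_x$ so that the pulled-back group becomes \emph{almost isotrivial} (this is possible since groups of multiplicative type are isotrivial Nisnevich-locally, see \cite[Remark~2.9]{Hoyois-1}). For almost isotrivial $G_x$ the resolution property of $[U_x/G_x]$ is known \cite[Example~2.8]{Hoyois-1}. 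After this refinement the rest of your argument goes through verbatim. So the fix is not to argue harder about $\sB G_x$ over the original $S_x$, but to refine the cover so that the groups become isotrivial.
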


\begin{proof}
By \thmref{thm:Nis-local-aff}, there exists a Nisnevich covering $\{f_i\colon [U_i/G_i]\to\sX\}_{i\in I}$ where
 $f_i$ is affine, $U_i$ is affine over an affine scheme $S_i$, 
 and $G_i$ is a linearly reductive almost multiplicative group scheme over $S_i$.
 By taking a further affine Nisnevich covering of $S_i$, we can ensure that $G_i$
 is almost isotrivial and hence that $[U_i/G_i]$ has the
 resolution property (see \cite[Example 2.8 and Remark 2.9]{Hoyois-1}).
 By~\corref{cor:Nis-qc}, we can also assume that $I$ is finite.
Let $\sX'=\coprod_i[U_i/G_i]$. Then the induced map $\sX'\to\sX$ is faithfully flat, quasi-finite, and affine.
Since $\sX'$ has the resolution property, we conclude that $(\sX,\sZ)$ 
is perfect by Theorems \ref{thm:HRydh-1} and \ref{thm:HRydh-ex}.
\end{proof}

\section{$K$-theory of perfect stacks}\label{sec:Nis-desc}
In this section, we establish some descent properties
of the $K$-theory, negative $K$-theory, and homotopy $K$-theory
of stacks. Special cases of these
results were earlier proven in \cite{KO}, \cite{KR-2} and \cite{Hoyois-2}.

\subsection{Localization, excision, and continuity}\label{sec:LMV}
Let $\sX$ be an algebraic stack. The algebraic $K$-theory spectrum of $\sX$ 
is defined to be the $K$-theory spectrum of the
complicial biWaldhausen category of perfect complexes in $\Ch_{\qc}(\sX)$
in the sense of \cite[\S1.5.2]{TT}. Here, the complicial biWaldhausen category
structure is given with respect to the degree-wise split monomorphisms as 
cofibrations and quasi-isomorphisms as weak equivalences.
This $K$-theory spectrum is denoted by $K(\sX)$.
Equivalently, one may define $K(\sX)$ as the $K$-theory spectrum
of the dg-category $\mathsf D_{\perf}(\sX)$ (see \cite[Corollary 7.12]{BGT}).
Note that the negative homotopy groups of $K(\sX)$ are zero (see \cite[\S1.5.3]{TT}).
We shall extend this definition to negative integers in the next section.
For a closed substack $\sZ$ of $\sX$, 
$K(\sX ~ {\rm on} ~ \sZ)$ is the $K$-theory spectrum of the complicial 
biWaldhausen category of those perfect complexes on $\sX$ which are acyclic
on $\sX \setminus \sZ$.

\begin{thm}[\bf Localization]\label{thm:Localization}
Let $(\sX,\sZ)$ be a perfect pair and let $j:\sU \inj \sX$ be the open
immersion complement to $\sZ$.
Then the morphisms of spectra
$K(\sX ~ {\rm on} ~ \sZ) \to K(\sX) \xrightarrow{j^*} K(\sU)$
induce a long exact sequence
\begin{align*}
\cdots \to K_i(\sX ~ {\rm on} ~ \sZ) \to K_i(\sX) & \to K_i(\sU) \to 
K_{i-1}(\sX ~ {\rm on} ~ \sZ) \to \cdots \\
& \to K_0(\sX ~ {\rm on} ~ \sZ) \to K_0(\sX) \to K_0(\sU).
\end{align*}
\end{thm}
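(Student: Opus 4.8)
The plan is to reduce the statement to the localization theorem for compactly generated dg-categories via the dg-enhancement of $K$-theory. Since $K(\sX)$ agrees with the $K$-theory spectrum of the dg-category $\mathsf D_{\perf}(\sX)$, and $K(\sX \text{ on } \sZ)$ with that of $\mathsf D_{\perf,\sZ}(\sX)$, it suffices to produce a cofiber sequence of spectra
\[
K(\mathsf D_{\perf,\sZ}(\sX)) \to K(\mathsf D_{\perf}(\sX)) \xrightarrow{j^*} K(\mathsf D_{\perf}(\sU))
\]
and then pass to homotopy groups, noting that the negative homotopy groups of all three connective $K$-theory spectra vanish, so the long exact sequence terminates at $K_0$ as stated.

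First I would invoke \propref{prop:Localization}: since $(\sX,\sZ)$ is perfect, the functor $j^*$ induces an equivalence $D_{\perf}(\sX)/D_{\perf,\sZ}(\sX) \xrightarrow{\sim} D_{\perf}(\sU)$ up to direct factors, i.e.\ a Morita equivalence at the level of the idempotent completions. By \propref{prop:cpt-obj-D_(qc)(X on Z)}, the categories $D_{\qc}(\sX)$, $D_{\qc,\sZ}(\sX)$ and $D_{\qc}(\sU)$ are compactly generated with compact objects the perfect complexes, and the sequence $D_{\qc,\sZ}(\sX) \to D_{\qc}(\sX) \to D_{\qc}(\sU)$ is a localization sequence of compactly generated triangulated (indeed dg-) categories in the sense of Neeman. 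Next I would apply the localization theorem for the $K$-theory of dg-categories (the dg-enhancement of Thomason--Trobaugh localization, as in Blumberg--Gepner--Tabuada or Schlichting): a short exact sequence of idempotent-complete dg-categories—or more precisely, one that becomes short exact after idempotent completion—gives a fiber sequence of (nonconnective, hence also connective up to the relevant range) $K$-theory spectra. Since $K$-theory is invariant under passage to idempotent completion in nonnegative degrees (the obstruction lives in $K_{-1}$, and these connective spectra have no negative homotopy), the ``up to direct factors'' caveat in \propref{prop:Localization} is harmless for the connective truncation. This yields the desired cofiber sequence.

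The main obstacle I anticipate is bookkeeping around the dg-enhancements and the idempotent-completion issue, rather than any deep difficulty: one must be careful that the equivalence of \propref{prop:Localization} is only up to direct factors, so the naive quotient $\mathsf D_{\perf}(\sX)/\mathsf D_{\perf,\sZ}(\sX)$ may be a dense subcategory of $\mathsf D_{\perf}(\sU)$ rather than all of it, and one invokes Thomason's theorem (cofinality) to see that this does not affect $K_i$ for $i \geq 0$. A secondary point is to confirm that the model of $K(\sX)$ via the complicial biWaldhausen category of perfect complexes in $\Ch_{\qc}(\sX)$ agrees with the dg-category model; this is exactly \cite[Corollary 7.12]{BGT} as already noted in the text, so I would simply cite it. Once these identifications are in place, the long exact sequence is immediate from the long exact sequence of homotopy groups of the fiber sequence, and truncating at $K_0$ uses only $K_{-1}$-vanishing for connective $K$-theory (\cite[\S1.5.3]{TT}).
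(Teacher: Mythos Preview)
Your proposal is correct and follows essentially the same route as the paper: both arguments reduce to \propref{prop:Localization} and then invoke a general localization theorem in $K$-theory together with cofinality to handle the ``up to direct factors'' issue. The paper simply cites \cite[Theorem~3.4]{KR-2} for this step (which runs the Thomason--Trobaugh/Waldhausen version of the argument), whereas you spell out the equivalent dg-categorical formulation via \cite{BGT}; the two frameworks are identified in the text already, so there is no substantive difference.
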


\begin{proof}
	This follows from~\propref{prop:Localization} as in \cite[Theorem~3.4]{KR-2}.
\end{proof}

\begin{thm}[\bf Excision]\label{thm:Excision}
Let $f:\sX'\to\sX$ be an étale morphism of stacks and let
$\sZ \subset \sX$ be a closed substack with quasi-compact open complement such that the projection
$\sZ\times_\sX\sX'\to\sZ$ is an isomorphism of associated reduced stacks.
Then the map $f^*: K(\sX \ {\rm on} \ \sZ) 
\to K(\sX' \ {\rm on} \ \sZ \times_{\sX}\sX')$
is a homotopy equivalence of spectra.
\end{thm}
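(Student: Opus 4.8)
The plan is to deduce the excision equivalence for $K$-theory directly from the corresponding statement for the dg-categories of perfect complexes, which was already established in Proposition~\ref{prop:Excision}. Recall that $K(\sX \text{ on } \sZ)$ is, by definition, the $K$-theory spectrum of the complicial biWaldhausen category of perfect complexes on $\sX$ acyclic on $\sX\setminus\sZ$, and by \cite[Corollary 7.12]{BGT} this agrees with the $K$-theory of the dg-category $\mathsf D_{\perf,\sZ}(\sX)$. The key input is that algebraic $K$-theory, viewed as a functor from small stable $\infty$-categories (or dg-categories) to spectra, sends equivalences to equivalences — this is immediate since $K$-theory is a functor and equivalences are isomorphisms in the relevant homotopy category.

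First I would record that the étale pullback $f^*$ induces a morphism of complicial biWaldhausen categories from perfect complexes on $\sX$ acyclic off $\sZ$ to perfect complexes on $\sX'$ acyclic off $\sZ\times_\sX\sX'$; this is clear because $f^*$ is exact and preserves perfection and the relevant support condition (a complex is acyclic on $\sX\setminus\sZ$ iff its pullback is acyclic on $\sX'\setminus(\sZ\times_\sX\sX')$, using that $f$ restricted to the open complements is still étale and, on underlying reduced substacks of $\sZ$, an isomorphism). Passing to $K$-theory, $f^*$ therefore induces a well-defined map of spectra $K(\sX\text{ on }\sZ)\to K(\sX'\text{ on }\sZ\times_\sX\sX')$, which is the map in the statement.

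Next I would invoke Proposition~\ref{prop:Excision}: under exactly the hypotheses of the theorem, $f^*\colon D_{\perf,\sZ}(\sX)\to D_{\perf,\sZ\times_\sX\sX'}(\sX')$ is an equivalence of triangulated categories, and in fact the proof there shows it is an equivalence of dg-enhancements $\mathsf D_{\perf,\sZ}(\sX)\xrightarrow{\ \sim\ }\mathsf D_{\perf,\sZ\times_\sX\sX'}(\sX')$ (the kernels of the horizontal functors in a homotopy Cartesian square of dg-categories, hence equivalent as dg-categories). Applying the $K$-theory functor to this equivalence of dg-categories yields that $f^*$ is an equivalence of $K$-theory spectra. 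Identifying the $K$-theory of $\mathsf D_{\perf,\sZ}(\sX)$ with $K(\sX\text{ on }\sZ)$ as above (and similarly on $\sX'$), we conclude.

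I do not anticipate a genuine obstacle here; the theorem is essentially a formal consequence of Proposition~\ref{prop:Excision} together with the agreement of the Waldhausen and dg-categorical definitions of $K(-\text{ on }-)$. The only point requiring a line of care is that Proposition~\ref{prop:Excision} is stated as an equivalence of triangulated categories, whereas for $K$-theory one needs the equivalence at the level of the dg-enhancements (or Waldhausen categories): this is supplied by its proof, which exhibits both sides as the kernel of a fixed map in a homotopy Cartesian square in the $\infty$-category of dg-categories, so the equivalence is automatically enhanced. One could alternatively cite \cite[Theorem~3.4]{KR-2} for the analogous reduction, exactly as was done for the Localization theorem above.
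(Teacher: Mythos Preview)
Your argument is correct and follows essentially the same route as the paper: deduce $K$-theory excision from the categorical excision of Proposition~\ref{prop:Excision}. The only difference is in the last step: rather than upgrading to a dg-equivalence and invoking \cite{BGT}, the paper cites \cite[Theorem~1.9.8]{TT}, which shows directly that a complicial exact functor inducing an equivalence of triangulated (homotopy) categories induces a homotopy equivalence on $K$-theory spectra, so the enhancement issue you worried about never arises.
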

\begin{proof}
This follows from~\propref{prop:Excision} using \cite[Theorem~1.9.8]{TT}.
\end{proof}

\begin{thm}[\bf Continuity]\label{thm:continuity}
	Let $\sX$ be the limit of a filtered diagram $(\sX_\alpha)$ of perfect stacks
	with affine transition morphisms. Then the canonical map
	\[
	\hocolim_\alpha K(\sX_\alpha)\to K(\sX)
	\]
	is a homotopy equivalence.
\end{thm}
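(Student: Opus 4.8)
The plan is to reduce the continuity statement for $K$-theory to the continuity statement for the underlying dg-categories of perfect complexes, which is exactly \propref{prop:continuity-perf}. Recall that $K(\sX)$ is defined as the $K$-theory of the dg-category $\mathsf D_\perf(\sX)$, and that algebraic $K$-theory, viewed as a functor from (small, idempotent-complete) dg-categories or stable $\infty$-categories to spectra, preserves filtered homotopy colimits. This last fact is standard: it holds for the connective $K$-theory functor on Waldhausen categories by a direct inspection of the $S_\bullet$-construction (each $S_n$ is built from finitely many objects and morphisms, so commutes with filtered colimits), and it is recorded for dg-categories in \cite{BGT}. Thus the composite $\hocolim_\alpha K(\sX_\alpha) = \hocolim_\alpha K(\mathsf D_\perf(\sX_\alpha)) \simeq K(\hocolim_\alpha \mathsf D_\perf(\sX_\alpha))$.

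First I would invoke \propref{prop:continuity-perf}: since $\sX = \lim_\alpha \sX_\alpha$ is a filtered limit of perfect stacks with affine transition morphisms, that proposition tells us both that $\sX$ is perfect (so $K(\sX)$ is defined via $\mathsf D_\perf(\sX)$ in the same way) and that the canonical map $\hocolim_\alpha \mathsf D_\perf(\sX_\alpha) \to \mathsf D_\perf(\sX)$ is a weak equivalence of dg-categories. Applying the $K$-theory functor to this weak equivalence, and using that $K$ preserves filtered homotopy colimits as above, yields the desired homotopy equivalence $\hocolim_\alpha K(\sX_\alpha) \xrightarrow{\sim} K(\sX)$. One should check that the map produced this way agrees with the canonical map in the statement (induced by the pullback functors $\mathsf D_\perf(\sX_\alpha) \to \mathsf D_\perf(\sX)$), but this is immediate from the construction of the colimit comparison map in \propref{prop:continuity-perf}.

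The only mild subtlety — and the step I expect to require the most care in the writeup — is making sure the $K$-theory functor is being applied to the right category of inputs and that ``homotopy colimit of dg-categories'' is interpreted consistently on both sides. Concretely, $\mathsf D_\perf$ as constructed in Section~\ref{sec:perfect-complexes} takes values in (possibly large) dg-categories, so to apply $K$-theory one passes to the small idempotent-complete dg-category of perfect complexes; one must confirm that the weak equivalence of \propref{prop:continuity-perf} restricts appropriately and that the homotopy colimit in the category of small idempotent-complete dg-categories (which involves an idempotent completion, possibly contributing to $K_0$ only) is compatible with $K$-theory. Since $K$-theory is invariant under idempotent completion in positive degrees and the idempotent completion is a filtered-colimit-preserving operation, no obstruction arises here; this is precisely the content of \cite[Corollary 7.12]{BGT} combined with the localization-invariance of $K$-theory. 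With these bookkeeping points settled, the proof is a one-line consequence of \propref{prop:continuity-perf}.

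\begin{proof}
	By \propref{prop:continuity-perf}, $\sX$ is perfect, so $K(\sX)$ is defined, and the
	canonical map $\hocolim_\alpha \mathsf D_\perf(\sX_\alpha) \to \mathsf D_\perf(\sX)$ is a
	weak equivalence of dg-categories. Since $K(-)$ is the $K$-theory of the dg-category of
	perfect complexes (see \cite[Corollary 7.12]{BGT}) and since algebraic $K$-theory
	commutes with filtered homotopy colimits of dg-categories \cite[\S9]{BGT}, we obtain
	\[
	\hocolim_\alpha K(\sX_\alpha) = \hocolim_\alpha K(\mathsf D_\perf(\sX_\alpha))
	\xrightarrow{\ \sim\ } K\bigl(\hocolim_\alpha \mathsf D_\perf(\sX_\alpha)\bigr)
	\xrightarrow{\ \sim\ } K(\mathsf D_\perf(\sX)) = K(\sX),
	\]
	and this composite is the canonical map.
\end{proof}
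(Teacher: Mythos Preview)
Your proof is correct and follows exactly the same approach as the paper: invoke \propref{prop:continuity-perf} for the dg-categories of perfect complexes, then use that $K$-theory preserves filtered homotopy colimits of dg-categories. The paper's proof is a single sentence to this effect, and your extra discussion of idempotent completion, while not harmful, is not needed here.
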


\begin{proof}
	This follows from \propref{prop:continuity-perf} and the fact that $K$ preserves
	filtered homotopy colimits of dg-categories.
\end{proof}

\subsection{The Bass construction and negative $K$-theory}\label{sec:Neg-K}
The non-connective $K$-theory spectrum of any stack may be defined
from the complicial biWaldhausen category of perfect complexes,
following Schlichting \cite{Schl}, or from the dg-category
$\mathsf D_{\perf}(\sX)$, following Cisinski--Tabuada \cite{CT}.
This allows one to define the negative $K$-theory of stacks.

In this subsection, we will see that for perfect stacks a non-connective
$K$-theory spectrum $K^B$ can be defined much more explicitly using
the construction of Bass--Thomason--Trobaugh. One may prove that this
construction agrees with those of Schlichting and Cisinski--Tabuada
exactly as in \cite[Theorem 3.21]{KR-2}.

The $K^B$-theory spectrum $K^B(\sX)$
was constructed in \cite[\S3E]{KR-2} based on
the following two assumptions.
\begin{enumerate}
\item
$\sX$ is a quotient stack of the form $[X/G]$ over a field, 
where $G$ is a linearly reductive group scheme.
\item
$\sX$ satisfies the resolution property.
\end{enumerate}
Since perfect stacks need not satisfy these conditions, we cannot
 directly quote the results of \cite{KR-2} for the construction of
the $K^B$-theory of stacks.
But the proofs are identical to those in
\cite{KR-2} using Theorems~\ref{thm:Localization} and
~\ref{thm:Excision}, so we shall only give a brief sketch of the construction.
The existence of the $K^B$-theory is based on the following version of
the fundamental theorem of Bass.

\begin{thm} [\bf{Bass fundamental theorem}] \label{thm:Bass-fund}
Let $\sX$ be a perfect stack and let 
$\sX[T]$ denote the stack 
$\sX \times \Spec(\Z[T])$. Then the following hold.
\begin{enumerate}
\item For $n \ge 1$, there is an exact sequence 
\begin{align*} 
0 \to K_n(\sX) \xrightarrow{(p_1^*, -p_2^*)}  & K_n(\sX[T]) \oplus 
K_n(\sX[T^{-1}]) \\ &  \xrightarrow{(j_1^*, j_2^*)}  K_n(\sX[T, T^{-1}])
 \xrightarrow{\partial_T} K_{n-1}(\sX) \to 0.
\end{align*}
Here $p_1^*, p_2^*$ are induced by the projections $\sX[T] \to \sX$, etc.
and $j_1^*, j_2^*$ are induced by the open immersions 
$\sX[T^{\pm1}] = \sX[T,T^{-1}] \to \sX[T]$,
etc. The sum of these exact sequences for  $n = 1, 2, \cdots$ 
is an exact sequence of graded $K_*(\sX)$-modules.\
\item For $n \ge 0$, $\partial_T: K_{n+1}(\sX[T^{\pm1}]) \to K_n(\sX)$
is naturally split by a map $h_T$ of $K_*(\sX)$-modules. 
Indeed, the cup product with $T \in K_1(\Z[T^{\pm1}])$ splits $\partial_T$ up to
a natural automorphism of $K_n(\sX)$.\
\item There is an exact sequence
\[
0 \to K_0(\sX) \xrightarrow{(p_1^*, -p_2^*)}  K_0(\sX[T]) \oplus 
K_0(\sX[T^{-1}])  \xrightarrow{(j_1^*, j_2^*)} K_0(\sX[T^{\pm1}]).
\]
\end{enumerate}
\end{thm}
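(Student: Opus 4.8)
The strategy is to follow the classical Bass–Thomason–Trobaugh argument verbatim, as done in \cite[Theorem 3.13 and \S3.5]{KR-2}, the only new ingredient being that we have the localization, excision, and continuity theorems available for perfect stacks rather than merely for quotient stacks with the resolution property. First I would record the inputs: by \propref{prop:perfect}~(1), if $(\sX,\emptyset)$ is perfect then so are $\sX[T]$, $\sX[T^{-1}]$, and $\sX[T^{\pm 1}]$, since these are $\sX\times\A^1$, $\sX\times\A^1$, and $\sX\times\G_m$ for the affine line and the torus over $\Z$, and $\G_m\subset\A^1$ is an open subscheme with closed complement a point. More precisely, viewing $\P^1_\Z$ with its two standard affine charts, and letting $\sX_0=\sX[T]$, $\sX_\infty=\sX[T^{-1}]$, $\sX_{01}=\sX[T^{\pm1}]$, and $\sX_{\P^1}=\sX\times\P^1$, all four stacks are perfect and $\sX_{01}\inj\sX_0$ is the open immersion with reduced complement $\sX\times\{0\}$ (and similarly at $\infty$).

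Next I would set up the Mayer–Vietoris and projective-bundle inputs that feed the Bass argument. From \thmref{thm:Localization} applied to the perfect pair $(\sX_0,\sX\times\{0\})$ and excision (\thmref{thm:Excision}) identifying $K(\sX_0\text{ on }\sX\times\{0\})$ with the corresponding group for a regular neighborhood — equivalently, using the standard $\G_m$-localization sequence — one obtains the localization sequence relating $K(\sX)$, $K(\sX_0)$, and $K(\sX_{01})$, with the boundary map $\partial_T$; here one also uses homotopy invariance of $K$-theory for \emph{regular} inputs only insofar as it is classically used, but in fact the Bass construction does not require homotopy invariance at all. The key computation is the projective bundle formula for $\P^1$: $K(\sX\times\P^1)\simeq K(\sX)\oplus K(\sX)$, which follows from the perfect-pair localization and the semiorthogonal decomposition $\mathsf D_\perf(\sX\times\P^1)=\langle\mathsf D_\perf(\sX),\mathsf D_\perf(\sX)\otimes\sO(-1)\rangle$; this is exactly \cite[Proposition 3.?]{KR-2} and goes through unchanged since it only uses that $\sO$ and $\sO(-1)$ generate and that the relevant Hom-complexes are computed by pushforward along a representable (hence concentrated, by \lemref{lem:Concentrated}) morphism.

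With these in hand, the proof of (1), (2), (3) is the purely formal diagram chase of Bass: one splices the localization sequences for the two charts of $\P^1$ with the Mayer–Vietoris sequence coming from the Nisnevich (Zariski) square covering $\P^1$ by $\A^1$ and $\A^1$ glued along $\G_m$ — this square is a Nisnevich square of stacks in the sense of \secref{sec:Nisnevich}, so Nisnevich descent for $K$ (which here follows from \thmref{thm:Localization} and \thmref{thm:Excision} exactly as in \cite[\S7]{TT}) gives the homotopy Cartesian square — and compares it with the split injection coming from the projective bundle formula. The splitting in (2) is produced, as in Bass and as in \cite[Theorem 3.6]{KR-2}, by cup product with the class $[T]\in K_1(\Z[T^{\pm1}])$ using the $K_*(\sX)$-module (indeed $K_*(\Spec\Z)$-algebra) structure on $K_*(\sX[T^{\pm1}])$; one checks on the nose that $\partial_T\circ(-\cup[T])$ is an automorphism of $K_n(\sX)$ (it is the identity up to the standard sign, coming from $\partial_T(\alpha\cup[T])=\pm\alpha$). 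Statement (3) is the $n=0$ truncation: exactness on the left and in the middle follows from the same Mayer–Vietoris sequence together with surjectivity of $K_0(\sX[T])\oplus K_0(\sX[T^{-1}])\to K_0(\sX\times\P^1)$, while there is no claimed exactness at $K_0(\sX[T^{\pm1}])$ because $K_{-1}(\sX)$ is not yet defined — this asymmetry is precisely what the Bass construction of $K^B$ will repair.

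The main obstacle is not any single deep step but making sure every classical ingredient is available \emph{for perfect stacks}: concretely, that the projective bundle formula and the Mayer–Vietoris/localization sequences hold without the resolution-property or quotient-stack hypotheses under which they were stated in \cite{KR-2}. The plan is to observe that each of those proofs uses the resolution property (or the quotient presentation) only to guarantee compact generation of the relevant derived categories and the existence of perfect complexes with prescribed support — exactly the two conditions packaged into the definition of a perfect pair (\defref{defn:perfect}) and verified to be stable under the operations $(-)\times\A^1$, $(-)\times\P^1$ and under passing to open complements by \propref{prop:perfect} and \propref{prop:Localization}. Once this replacement is made, the rest is the standard Bass bookkeeping, and I would simply cite \cite[\S3.5]{KR-2} for the details while pointing out the one-line substitution.
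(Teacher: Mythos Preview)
Your proposal is correct and follows essentially the same route as the paper: reduce to the classical Bass--Thomason--Trobaugh argument by checking that the three inputs---the projective bundle formula for $\P^1_\sX$ (which by \cite[Theorem~3.6]{KR-2} holds for \emph{any} stack), localization for the pairs $(\P^1_\sX,\sX[T^{-1}])$ and $(\sX[T],\sX[T^{\pm1}])$ via \propref{prop:perfect}~(1) and \thmref{thm:Localization}, and excision via \thmref{thm:Excision}---are available for perfect stacks, then cite \cite[Theorem~6.1]{TT}. Your invocation of continuity is unnecessary here (it is not used in the Bass argument), and the Mayer--Vietoris square you set up is already a consequence of localization plus excision, so the paper's shorter list of three ingredients suffices.
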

\begin{proof}
The proof of this theorem is word by word identical to the proof of
its scheme version given in \cite[Theorem~6.1]{TT}, once we know that
the algebraic $K$-theory spectrum satisfies the following properties:
\begin{enumerate}
\item
The projective bundle formula for the projective line $\P^1_\sX$.
\item
Localization for the pairs $(\P^1_\sX,\sX[T^{-1}])$ and $(\sX[T],\sX[T^{\pm 1}])$.
\item
Excision.
\end{enumerate}
Property (1) follows from \cite[Theorem~3.8]{KR-2} which holds for any algebraic
stack. Property (2) follows from~\propref{prop:perfect} (1) and~\thmref{thm:Localization},
and property (3) is~\thmref{thm:Excision}.
\end{proof}

As an immediate consequence of \thmref{thm:Bass-fund}, one obtains the
following.

\begin{thm}\label{thm:K-G-main}
Let $\sX$ be a perfect stack. Then there is a 
spectrum
$K^B(\sX)$ together with a natural map $K(\sX) \to K^B(\sX)$ of 
spectra inducing isomorphisms 
$\pi_iK(\sX) \cong \pi_iK^B(\sX)$ for $i \ge 0$, 
which satisfies the following properties.
\begin{enumerate}
\item
Let $\sZ \subset \sX$ be a closed substack
with quasi-compact open complement $j: \sU \inj \sX$ such that $(\sX,\sZ)$ is perfect. 
Then there is a homotopy fiber sequence of spectra
\[
K^B(\sX \ {\rm on} \ \sZ) \to  K^B(\sX) \xrightarrow{j^*} K^B(\sU).
\]
\item
Let $f: \sY \to \sX$ be an {\'e}tale map between perfect stacks such that the projection 
$\sZ \times_{\sX} \sY \to \sZ$ is an isomorphism on the associated reduced
stacks.
Then the map $f^*: K^B(\sX \ {\rm on} \ \sZ)  \to 
K^B(\sY \ {\rm on} \ \sZ \times_{\sX} \sY)$ is a homotopy equivalence.
\item
Let $\pi: \P(\sE) \to \sX$ be the projective bundle 
associated to a vector bundle $\sE$ on $\sX$ of rank $r$.
Then the map
\[
\prod\limits_{0}^{r-1} K^B(\sX) \to  K^B(\P(\sE))
\]
that sends $(a_0, \cdots , a_{r-1})$ to $\sum_i
\sO(-i) \otimes \pi^*(a_i)$ is a homotopy equivalence.
\item
Let $i:\sY\inj\sX$ be 
a regular closed immersion and let $p:\sX'\to\sX$ be the blow-up of $\sX$ with center $\sY$.
Then the square of spectra
\begin{equation*}
\xymatrix@C1pc{
K^B(\sX) \ar[r]^-{i^*} \ar[d]_{p^*} & K^B(\sY) \ar[d] \\
K^B(\sX') \ar[r] & K^B(\sX'\times_\sX\sY)}
\end{equation*}
is homotopy Cartesian.
\item Suppose that $\sX$ is the limit of a filtered diagram $(\sX_\alpha)$ of perfect stacks
with affine transition morphisms. Then the canonical map
\[
\hocolim_\alpha K^B(\sX_\alpha)\to K^B(\sX)
\]
is a homotopy equivalence.
\end{enumerate}
\end{thm}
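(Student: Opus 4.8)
The plan is to obtain $K^B(\sX)$ from the Bass--Thomason--Trobaugh delooping machine and then to deduce (1)--(5) by delooping the corresponding connective statements. The basic observation is that the class of perfect pairs is stable under the Laurent thickenings occurring in the construction: applying \propref{prop:perfect}~(1) with $Y=\Spec(\Z[T])$ (resp.\ $\Spec(\Z[T^{-1}])$, $\Spec(\Z[T^{\pm1}])$), an affine scheme, and its empty closed subscheme, shows that $\sX[T]$, $\sX[T^{-1}]$, $\sX[T^{\pm1}]$ are perfect whenever $\sX$ is, and that a perfect pair $(\sX,\sZ)$ stays perfect after base change to $\Z[T]$, $\Z[T^{-1}]$, $\Z[T^{\pm1}]$, and more generally to $\Z[T_1^{\pm1},\dots,T_n^{\pm1}]$ with any subset of the $T_i$ inverted. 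Hence \thmref{thm:Bass-fund} is available for $\sX$ and for all of these thickenings, and one builds $K^B(\sX)$ exactly as in \cite[\S6]{TT} (compare \cite[\S3.5]{KR-2}): one delooping step replaces a spectrum $E(\sX)$ by the suitably shifted homotopy cofiber of $E(\sX[T])\oplus E(\sX[T^{-1}])\to E(\sX[T^{\pm1}])$, whose nonnegative homotopy recovers the positive-degree homotopy of $E(\sX)$ and whose $\pi_{-1}$ is the Bass cokernel; iterating in fresh variables and passing to the colimit gives $K^B(\sX)$ together with the natural map $K(\sX)\to K^B(\sX)$ inducing isomorphisms on $\pi_i$ for $i\ge0$. (Agreement with the Schlichting and Cisinski--Tabuada deloopings is checked as in \cite[Theorem~3.13]{KR-2}, but is not needed below.)

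Properties (1), (2), (3) and (5) are then formal. For (1): by \propref{prop:perfect}~(1) the pairs $(\sX,\sZ)$ and all their Laurent thickenings are perfect, so \thmref{thm:Localization} gives a compatible family of connective localization sequences for $\sX,\sX[T],\sX[T^{-1}],\sX[T^{\pm1}],\dots$ (and their $\sZ$- and $\sU$-versions); these form a map of towers of spectra, and since the delooping is built from finite homotopy (co)limits followed by a filtered homotopy colimit, all of which preserve homotopy fiber sequences of spectra, the delooped sequence $K^B(\sX\ {\rm on}\ \sZ)\to K^B(\sX)\to K^B(\sU)$ is a homotopy fiber sequence. Property (2) is the same argument with \thmref{thm:Excision} in place of \thmref{thm:Localization}. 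For (3): the connective projective bundle formula \cite[Theorem~3.6]{KR-2} holds for every algebraic stack, $\pi\colon\P(\sE)\to\sX$ is schematic with relatively ample $\sO(1)$, so $\P(\sE)$ and its Laurent thickenings are perfect by \propref{prop:perfect}~(2), and the connective equivalence $\prod_{0}^{r-1}K(\sX)\to K(\P(\sE))$ is natural in $\sX$, hence compatible with the $T$-substitutions; delooping it yields the $K^B$-statement. For (5): apply \thmref{thm:continuity} not only to $(\sX_\alpha)$ but to $(\sX_\alpha[T])$, $(\sX_\alpha[T^{-1}])$, $(\sX_\alpha[T^{\pm1}])$, still filtered diagrams of perfect stacks with affine transition morphisms by \propref{prop:perfect}~(1); since $K^B$ is assembled from the connective $K$-theory of these by finite homotopy (co)limits and filtered homotopy colimits commute with such, $\hocolim_\alpha K^B(\sX_\alpha)\to K^B(\sX)$ is an equivalence.

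The only input that is not purely formal is the blow-up square (4). For $p\colon\sX'={\rm Bl}_\sY(\sX)\to\sX$ the blow-up along a regularly embedded $\sY\inj\sX$ of codimension $c$, the plan is to use the semiorthogonal decomposition of $\mathsf D_{\perf}(\sX')$ into $p^*\mathsf D_{\perf}(\sX)$ and $c-1$ twisted copies of $\mathsf D_{\perf}(\sY)$, together with the identification of the exceptional divisor $\sX'\times_\sX\sY$ with the projective bundle $\P(N_{\sY/\sX})$ and the projective bundle formula for $\mathsf D_{\perf}(\P(N_{\sY/\sX}))$. The formation of the blow-up and of this decomposition commutes with flat base change on $\sX$, so by fpqc descent for $\mathsf D_{\perf}$ one reduces to the case of affine schemes, where it is classical; for stacks, this is the argument carried out in \cite{KR-2}. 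It shows that the connective square of (4) is homotopy Cartesian, and since all four stacks and their Laurent thickenings are perfect, delooping produces the $K^B$-square. I expect this to be the main obstacle: properties (1)--(3) and (5) reduce mechanically to \thmref{thm:Bass-fund}, \thmref{thm:Localization}, \thmref{thm:Excision}, \thmref{thm:continuity} and the closure properties of \propref{prop:perfect}, whereas (4) requires knowing that the blow-up semiorthogonal decomposition descends to the stack setting, which is why it must be isolated and treated by reduction to the affine case.
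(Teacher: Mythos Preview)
Your proposal is correct and follows essentially the same route as the paper: construct $K^B$ via the Bass--Thomason--Trobaugh machine using \thmref{thm:Bass-fund} and the formalism of \cite[\S6]{TT}, then deloop the connective versions of (1)--(4), invoking the closure properties of perfect pairs under Laurent thickenings (\propref{prop:perfect}). The paper is simply terser, referring to \cite[Theorem~3.12]{KR-2} for the deduction of (1)--(4); you spell out more of the mechanics, particularly for (4), where your fpqc-descent reduction of the blow-up semiorthogonal decomposition is a legitimate route to the connective input.

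The one place where your argument diverges slightly is (5). You propose to deloop \thmref{thm:continuity} by applying it to each Laurent thickening $(\sX_\alpha[T_1^{\pm1},\dots])$ and then commuting the filtered colimit over $\alpha$ past the finite (co)limits and the filtered colimit defining $K^B$. This works, but the paper takes a shorter path: since $\pi_{-n}K^B(\sX)$ is, by construction, a natural retract of $K_0(\G_m^n\times\sX)$, continuity for the negative homotopy groups follows immediately from \thmref{thm:continuity} applied once (to $\G_m^n\times\sX_\alpha$), with no need to track the full delooping tower. Your argument is not wrong, just slightly heavier than necessary.
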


\begin{proof}
The spectrum $K^B(\sX)$ is constructed
word by word using \thmref{thm:Bass-fund} and 
the formalism given in (6.2)--(6.4) of \cite{TT} for the 
case of schemes. The proof of the asserted properties is a standard deduction
from the analogous properties of $K(\sX)$. The sketch of this deduction for (1)--(4)
can be found in \cite[Theorem~3.20]{KR-2}.
Note that quasi-compact open substacks of $\sX$, projective bundles over $\sX$, and blow-ups of $\sX$
are perfect stacks by~\propref{prop:perfect} (2).
For (5), it suffices to check that $\colim_\alpha \pi_nK^B(\sX_\alpha)\cong \pi_nK^B(\sX)$ for all $n\in\Z$.
This follows from \thmref{thm:continuity} since
$\pi_{-n}K^B(\sX)$, for $n>0$, is a natural retract of $K_0(\G_m^n\times\sX)$.
\end{proof}

\begin{cor}\label{cor:Nis-descent-KB}
Let
\begin{equation*}
\xymatrix{
\sW \ar@{^{(}->}[r] \ar[d] & \sV \ar[d]^{f} \\  
\sU \ar@{^{(}->}[r]^{e} & \sX,}
\end{equation*}
be a Nisnevich square of stacks, and suppose that the pairs $(\sX,\sX\setminus\sU)$
and $(\sV,\sV\setminus\sW)$ are perfect. Then the induced square of spectra
 \begin{equation*}
 	\xymatrix@C1pc{
 	K^B(\sX) \ar[r]^-{f^*} \ar[d]_{e^*} & K^B(\sV) \ar[d] \\
 	K^B(\sU) \ar[r] & K^B(\sW)}
 \end{equation*}
is homotopy Cartesian.
\end{cor}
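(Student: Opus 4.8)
The plan is to deduce this from the localization fiber sequence and the excision property of non-connective $K$-theory recorded in \thmref{thm:K-G-main}. Write $\sZ=\sX\setminus\sU$ for the reduced complement. By the definition of a Nisnevich square (recall~\eqref{eqn:Nis-square}), the projection $\sZ\times_\sX\sV\to\sZ$ is an isomorphism of associated reduced stacks; and since $f$ is {\'e}tale, the open substack $\sW=f^{-1}(\sU)\subset\sV$ has reduced complement $\sV\setminus\sW$ with the same support as the closed substack $\sZ\times_\sX\sV$. By hypothesis the pairs $(\sX,\sZ)$ and $(\sV,\sV\setminus\sW)$ are perfect, so in particular $\sX$ and $\sV$ are perfect stacks, and both localization sequences below are available.

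First I would apply \thmref{thm:K-G-main}(1) to these two perfect pairs to obtain homotopy fiber sequences of spectra
\[
K^B(\sX\ {\rm on}\ \sZ)\to K^B(\sX)\xrightarrow{e^*}K^B(\sU),
\qquad
K^B(\sV\ {\rm on}\ \sV\setminus\sW)\to K^B(\sV)\to K^B(\sW),
\]
which are natural in the pair, so that the pullback maps assemble into a morphism from the first sequence to the second, inducing on the fibres the natural map $f^*\colon K^B(\sX\ {\rm on}\ \sZ)\to K^B(\sV\ {\rm on}\ \sV\setminus\sW)$. A morphism between homotopy fiber sequences of spectra induces a homotopy Cartesian square on the last two terms precisely when it induces an equivalence on the fibres, so it remains to check that this $f^*$ is a homotopy equivalence.

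For the last point I would use that $K$-theory with supports depends only on the underlying closed subset, so that $K^B(\sV\ {\rm on}\ \sV\setminus\sW)$ may be identified with $K^B(\sV\ {\rm on}\ \sZ\times_\sX\sV)$; the Nisnevich-square condition that $\sZ\times_\sX\sV\to\sZ$ is an isomorphism of reduced stacks is then exactly the hypothesis needed to apply the excision statement \thmref{thm:K-G-main}(2) with $\sY=\sV$, and this gives the required equivalence. I do not expect a genuine obstacle: the whole argument is a formal consequence of results already established. The only thing that needs care is the bookkeeping, namely verifying that the perfectness assumptions in the statement are exactly what is needed to invoke parts (1) and (2) of \thmref{thm:K-G-main}, and tracking the various complements along $f$ so that the Nisnevich-square condition becomes the hypothesis of the excision theorem.
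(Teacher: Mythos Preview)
Your proposal is correct and is exactly the argument the paper intends: the paper's own proof simply says ``This follows immediately from \thmref{thm:K-G-main} (1) and (2)'', and you have spelled out precisely how localization (1) gives the two fiber sequences and excision (2) identifies the fibers.
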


\begin{proof}
	This follows immediately from~\thmref{thm:K-G-main} (1) and (2).
\end{proof}

\begin{remk}\label{remk:Rep-etale}
We remark that if $(\sX, \sX \setminus \sU)$ is a perfect pair and
if the map $f: \sV \to \sX$ in \corref{cor:Nis-descent-KB} is 
representable and separated, then $(\sV, \sV \setminus \sW)$ is automatically 
a perfect pair. The reason is that in this case, 
$f$ is quasi-affine by Zariski's Main Theorem for stacks 
\cite[Theorem~16.5]{LM} and one can apply 
\propref{prop:perfect} (2). 
\end{remk}

Since the homotopy groups of the two spectra $K(\sX)$ and $K^B(\sX)$ agree
in non-negative degrees by \thmref{thm:K-G-main}, we make the following
definition.

\begin{defn}\label{defn:Neg-K-def}
Let $\sX$ be a perfect stack and $i\in \Z$.
We let $K_i(\sX)$ denote the $i$-th
homotopy group of the spectrum $K^B(\sX)$.
\end{defn}

\subsection{The homotopy $K$-theory of perfect stacks}\label{sec:HKT}
For $n \in \N$, let 
\[
\Delta^n = \Spec\left(
\frac{\Z[t_0, \cdots , t_n]}{(\sum_i t_i - 1)}\right).
\]
Recall that $\Delta^{\bullet}$
is a cosimplicial scheme. For a perfect stack $\sX$,
the {\sl homotopy $K$-theory} of $\sX$ is defined as
\begin{equation*}
KH(\sX) = \hocolim_{n\in\Delta^{\rm op}} K^B(\sX \times \Delta^n).
\end{equation*}
There is a natural map $K^B(\sX) \to KH(\sX)$ induced by $0\in \Delta^{\rm op}$. 

\begin{thm}\label{thm:KH-Inv}
Let $\sX$ be a perfect stack.
\begin{enumerate}
\item
Let $\sZ \subset \sX$ be a closed substack
with quasi-compact open complement $j: \sU \inj \sX$ such that $(\sX,\sZ)$ is perfect. 
Then there is a homotopy fiber sequence of spectra
\[
KH(\sX \ {\rm on} \ \sZ) \to  KH(\sX) \xrightarrow{j^*} KH(\sU).
\]
\item
Let $f: \sY \to \sX$ be an {\'e}tale map between perfect stacks such that the projection 
$\sZ \times_{\sX} \sY \to \sZ$ is an isomorphism on the associated reduced
stacks.
Then the map $f^*: KH(\sX \ {\rm on} \ \sZ)  \to 
KH(\sY \ {\rm on} \ \sZ \times_{\sX} \sY)$ is a homotopy equivalence.
\item
Let $\pi: \P(\sE) \to \sX$ be the projective bundle 
associated to a vector bundle $\sE$ on $\sX$ of rank $r$.
Then the map
\[
\prod\limits_{0}^{r-1} KH(\sX) \to  KH(\P(\sE))
\]
that sends $(a_0, \cdots , a_{r-1})$ to $\sum_i
\sO(-i) \otimes \pi^*(a_i)$ is a homotopy equivalence.
\item
Let $i:\sY\inj\sX$ be 
a regular closed immersion and and let $p:\sX'\to\sX$ be the blow-up of $\sX$ with center $\sY$.
Then the square of spectra
\begin{equation*}
\xymatrix@C1pc{
KH(\sX) \ar[r]^-{i^*} \ar[d]_{p^*} & KH(\sY) \ar[d] \\
KH(\sX') \ar[r] & KH(\sX'\times_\sX\sY)}
\end{equation*}
is homotopy Cartesian.
\item Suppose that $\sX$ is the limit of a filtered diagram $(\sX_\alpha)$ of perfect stacks
with affine transition morphisms. Then the canonical map
\[
\hocolim_\alpha KH(\sX_\alpha)\to KH(\sX)
\]
is a homotopy equivalence.
\item
Let $u: \sE\to\sX$ be a vector bundle over $\sX$.
Then the induced map $u^*: KH(\sX) \to KH(\sE)$ is a 
homotopy equivalence.
\end{enumerate}
\end{thm}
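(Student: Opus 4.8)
The plan is to prove the six assertions of \thmref{thm:KH-Inv} by deducing them formally from the corresponding properties of $K^B$ established in \thmref{thm:K-G-main}, using the fact that $KH(\sX)$ is a homotopy colimit, over $\Delta^{\op}$, of the spectra $K^B(\sX\times\Delta^n)$, together with the observation that homotopy colimits commute with homotopy colimits and preserve homotopy (co)fiber sequences. First I would note that for any perfect stack $\sX$ and any $n$, the stack $\sX\times\Delta^n$ is perfect by \propref{prop:perfect}~(1) (taking $Y=\Delta^n$, $W=\Delta^n$), and similarly $(\sX\times\Delta^n,\sZ\times\Delta^n)$ is perfect whenever $(\sX,\sZ)$ is; moreover $(\sX\times\Delta^n)\setminus(\sZ\times\Delta^n)=\sU\times\Delta^n$, and projective bundles and blow-ups are compatible with the product with $\Delta^n$. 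These remarks reduce every statement below to the already-known $K^B$ version applied levelwise in the cosimplicial direction.

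Concretely: for (1), apply the homotopy fiber sequence of \thmref{thm:K-G-main}~(1) to the perfect pair $(\sX\times\Delta^n,\sZ\times\Delta^n)$ for each $n$, and take $\hocolim_{n\in\Delta^{\op}}$; since filtered (indeed arbitrary) homotopy colimits of spectra preserve homotopy fiber sequences, we obtain the fiber sequence for $KH$. For (2), do the same with \thmref{thm:K-G-main}~(2), noting that $f\times\id_{\Delta^n}$ is again étale and that $(\sZ\times\Delta^n)\times_{\sX\times\Delta^n}(\sY\times\Delta^n)=(\sZ\times_\sX\sY)\times\Delta^n$ so the reduced-isomorphism hypothesis is preserved; then take $\hocolim$. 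Statements (3) and (4) are identical in spirit, using \thmref{thm:K-G-main}~(3) and~(4) respectively and the facts that $\P(\sE)\times\Delta^n=\P(p_1^*\sE)$ over $\sX\times\Delta^n$ and that blowing up commutes with the flat base change $\sX\times\Delta^n\to\sX$ (so the center stays regularly immersed). For (5), given a filtered diagram $(\sX_\alpha)$ with affine transition maps and limit $\sX$, the diagram $(\sX_\alpha\times\Delta^n)$ still has affine transition maps with limit $\sX\times\Delta^n$, so \thmref{thm:K-G-main}~(5) gives $\hocolim_\alpha K^B(\sX_\alpha\times\Delta^n)\simeq K^B(\sX\times\Delta^n)$ for each $n$; taking $\hocolim_{n\in\Delta^{\op}}$ and interchanging the two homotopy colimits yields the claim.

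The genuinely new statement is (6), $\A^1$-homotopy invariance: $u^*\colon KH(\sX)\to KH(\sE)$ is an equivalence for a vector bundle $u\colon\sE\to\sX$. Here the plan is the classical argument of Weibel: first treat the trivial line bundle $\sE=\sX\times\A^1=\sX[T]$. Using the Bass fundamental theorem \thmref{thm:Bass-fund} one sees that the homotopy fiber of $K^B(\sX)\to K^B(\sX[T])$ is, up to a shift, the "$NK$" part, and on applying the $\Delta^\bullet$-realization these contributions become contractible because $K^B(\sX\times\Delta^n\times\A^1)$ is homotopy invariant after realization — precisely as in Weibel's original proof that $KH$ is $\A^1$-invariant for schemes \cite{Weibel-1}, which uses only localization for $(\sX[T],\sX[T^{\pm1}])$ and the projective bundle formula for $\P^1$, both available to us by \propref{prop:perfect}~(1), \thmref{thm:Localization}, and \thmref{thm:K-G-main}~(3). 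This gives invariance under $\A^1$; iterating gives invariance under $\A^r=$ affine space. For a general (possibly nontrivial) vector bundle $\sE$ of rank $r$, one uses the standard dévissage: $\sE$ admits a filtration whose successive quotients can be built from line bundles after a Jouanolou-type torsor trick, or more directly one embeds $\sE\inj\P(\sE\oplus\sO)$ with complement $\P(\sE)$ and plays the projective bundle formula for $\P(\sE\oplus\sO)$ (a $\P^r$-bundle) against that for $\P(\sE)$ (a $\P^{r-1}$-bundle) inside the localization sequence for the pair $(\P(\sE\oplus\sO),\sE)$, which is perfect by \propref{prop:perfect}~(2). Comparing ranks forces $u^*$ to be an equivalence.

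I expect the main obstacle to be statement (6), and within it the reduction of a nontrivial vector bundle to the trivial case: the bare $\A^1$-invariance for $\sX[T]$ is a verbatim transcription of Weibel, but ensuring that the localization/projective-bundle bookkeeping for $\P(\sE\oplus\sO)$ really isolates the pullback $u^*KH(\sX)\to KH(\sE)$ as the unique "new" summand — and that all the stacks appearing ($\P(\sE)$, $\P(\sE\oplus\sO)$, $\sE$, and their products with $\Delta^\bullet$) are perfect pairs, which they are by \propref{prop:perfect} — requires care. Everything else is a mechanical "apply \thmref{thm:K-G-main} levelwise and take $\hocolim_{n\in\Delta^{\op}}$" argument.
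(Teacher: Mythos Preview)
Your handling of (1)--(5) is correct and is precisely what the paper does: it states that these follow immediately from the definition of $KH$ and \thmref{thm:K-G-main}, which is exactly your ``apply levelwise to $\sX\times\Delta^n$ and take $\hocolim_{n\in\Delta^{\op}}$'' argument.

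For (6) the paper gives no argument at all beyond citing \cite[Theorem~5.2]{KR-2} and asserting that the same proof goes through for perfect stacks; your outline is in fact more detailed than the paper's, and it is essentially the argument of \cite{KR-2}. Two small remarks. First, $\A^1$-invariance of $KH$ does not require the Bass fundamental theorem: it is the elementary observation that the cosimplicial objects $[n]\mapsto\sX\times\Delta^n$ and $[n]\mapsto\sX\times\A^1\times\Delta^n$ are simplicially homotopy equivalent (using $\A^1\cong\Delta^1$ and the standard contraction), so $K^B$ applied to them has the same geometric realization. Second, the phrase ``comparing ranks forces $u^*$ to be an equivalence'' is where the actual content lies, and a rank count alone is not enough. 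What one needs is that in the localization sequence
\[
KH\bigl(\P(\sE\oplus\sO)\text{ on }\P(\sE)\bigr)\longrightarrow KH\bigl(\P(\sE\oplus\sO)\bigr)\xrightarrow{\ j^*\ } KH(\sE),
\]
the first term is identified with $KH(\P(\sE))$ via the pushforward $i_*$. Concretely, the section of $\sO(1)$ coming from the projection $\sE\oplus\sO\to\sO$ vanishes exactly along $\P(\sE)$, so $[\sO(-1)\to\sO]=i_*\sO_{\P(\sE)}$, and under the projective bundle bases the classes $\sO(-j)-\sO(-j+1)$ ($1\le j\le r$) in $KH(\P(\sE\oplus\sO))$ are the $i_*$-images of the generators $\sO_{\P(\sE)}(-j+1)$ of $KH(\P(\sE))$. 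The statement that $i_*$ is an equivalence onto $KH$ with support is purity for the regularly immersed divisor $i$, proved for schemes in \cite{TT} and carried over to the present setting in \cite{KR-2}; once granted, the fiber sequence forces $j^*$ to identify $KH(\sE)$ with the remaining summand $KH(\sX)$ via $u^*$. So your plan is sound, but this purity step is the point to make explicit rather than leave as bookkeeping.
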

\begin{proof}
Properties (1), (2), (3), (4), and (5) follow immediately from the
definition of $KH(\sX)$ and \thmref{thm:K-G-main}.
The proof of (6) for quotient stacks is given in \cite[Theorem~5.2]{KR-2}
and the same proof is valid for perfect stacks.
\end{proof}

\begin{cor}\label{cor:Nis-descent-KH}
Let
\begin{equation*}
\xymatrix{
\sW \ar@{^{(}->}[r] \ar[d] & \sV \ar[d]^{f} \\  
\sU \ar@{^{(}->}[r]^{e} & \sX,}
\end{equation*}
be a Nisnevich square of stacks, and suppose that the pairs $(\sX,\sX\setminus\sU)$
and $(\sV,\sV\setminus\sW)$ are perfect. Then the induced square of spectra
 \begin{equation*}
 	\xymatrix@C1pc{
 	KH(\sX) \ar[r]^-{f^*} \ar[d]_{e^*} & KH(\sV) \ar[d] \\
 	KH(\sU) \ar[r] & KH(\sW)}
 \end{equation*}
is homotopy Cartesian.
\end{cor}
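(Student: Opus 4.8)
The plan is to run the same argument as in \corref{cor:Nis-descent-KB}, with $K^B$ replaced by $KH$ and \thmref{thm:K-G-main} replaced by \thmref{thm:KH-Inv}. Write $\sZ=\sX\setminus\sU$ for the reduced complement of $\sU$ in $\sX$, so that $\sZ\times_\sX\sV$ is the reduced complement of $\sW$ in $\sV$; by hypothesis the pairs $(\sX,\sZ)$ and $(\sV,\sZ\times_\sX\sV)$ are perfect, and in particular $\sX$ and $\sV$ are perfect stacks.

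First I would apply \thmref{thm:KH-Inv}~(1) to these two perfect pairs to get homotopy fiber sequences $KH(\sX\ {\rm on}\ \sZ)\to KH(\sX)\xrightarrow{e^*}KH(\sU)$ and $KH(\sV\ {\rm on}\ \sZ\times_\sX\sV)\to KH(\sV)\to KH(\sW)$. The étale morphism $f\colon\sV\to\sX$ induces a map from the first sequence to the second, and the square in the statement is the right-hand square of this morphism of fiber sequences. It therefore suffices to show that the induced map on homotopy fibers $f^*\colon KH(\sX\ {\rm on}\ \sZ)\to KH(\sV\ {\rm on}\ \sZ\times_\sX\sV)$ is a homotopy equivalence, and this is precisely \thmref{thm:KH-Inv}~(2), whose hypotheses are met: $f$ is étale, $\sX$ and $\sV$ are perfect, and $\sZ\times_\sX\sV\to\sZ$ is an isomorphism of associated reduced stacks because the given square is a Nisnevich square.

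There is no genuine obstacle here; the only point to watch is the matching of hypotheses, namely that the perfectness of the two pairs in the statement is exactly what is needed to invoke both parts of \thmref{thm:KH-Inv}. Alternatively, one can deduce the corollary directly from \corref{cor:Nis-descent-KB}: multiplying the given Nisnevich square by $\Delta^n$ yields again a Nisnevich square, whose relevant pairs $(\sX\times\Delta^n,\sZ\times\Delta^n)$ and $(\sV\times\Delta^n,(\sZ\times_\sX\sV)\times\Delta^n)$ are perfect by \propref{prop:perfect}~(1), so \corref{cor:Nis-descent-KB} makes the resulting square of $K^B$-spectra homotopy Cartesian for every $n$; taking $\hocolim_{\Delta^\op}$ and using that homotopy Cartesian squares of spectra are preserved by homotopy colimits then gives the claim for $KH$.
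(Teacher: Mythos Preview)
Your proposal is correct and takes essentially the same approach as the paper, which simply says the corollary follows immediately from \thmref{thm:KH-Inv} (1) and (2). Your alternative via \corref{cor:Nis-descent-KB} and $\hocolim_{\Delta^{\rm op}}$ is also valid and is in fact how \thmref{thm:KH-Inv} (1) and (2) are themselves deduced from \thmref{thm:K-G-main}.
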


\begin{proof}
	This follows immediately from~\thmref{thm:KH-Inv} (1) and (2).
\end{proof}

\begin{rem}
In \cite{Hoyois-2}, a potentially different definition of $KH$ is given for certain quotient stacks,
which forces $KH$ to be invariant with respect to vector bundle \emph{torsors} and not just vector bundles.
The two definitions agree for quotients of schemes by finite or multiplicative type groups,
as we will show in~\lemref{lem:torsor-inv}, but they may differ in general.
We do not know if the above definition of $KH$ has good properties for general perfect stacks.
\end{rem}

\section{$G$-theory and the case of regular stacks}\label{sec:Reg}

Our goal in this section is to show that perfect stacks 
that are Noetherian and regular have no negative $K$-groups.
We will do this by comparing the $K$-theory and $G$-theory of such stacks.

Let $\sX$ be a stack. Recall that $\Mod(\sX)$ is the abelian category of 
$\sO_{\sX}$-modules on the lisse-{\'e}tale site of $\sX$
 and $\QC(\sX)\subset  \Mod(\sX)$ is the abelian subcategory of 
quasi-coherent sheaves.

\begin{lem}\label{lem:Lurie*}
Assume that $\sX$ is a Noetherian stack.
Then the inclusion $\iota_{\sX}: \QC(\sX) \inj \Mod(\sX)$ induces an 
equivalence of the derived categories $D^{+}(\QC(\sX)) \xrightarrow{\simeq}
D^{+}_{\qc}(\sX)$.
\end{lem}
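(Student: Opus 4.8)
The plan is to realise $\iota_\sX$ as an equivalence on bounded-below derived categories by checking full faithfulness and essential surjectivity, and to reduce the latter to the former. Since $\iota_\sX\colon \QC(\sX)\to\Mod(\sX)$ is exact and fully faithful, it induces an exact functor $D^+(\QC(\sX))\to D^+_{\qc}(\sX)$, and full faithfulness of this functor is exactly the assertion that the comparison maps $\operatorname{Ext}^i_{\QC(\sX)}(\sF,\sG)\to\operatorname{Ext}^i_{\Mod(\sX)}(\sF,\sG)$ are isomorphisms for all quasi-coherent $\sF,\sG$ and all $i\ge 0$. Granting this, essential surjectivity follows by a standard d\'evissage: any $\sG\in D^+_{\qc}(\sX)$ is built from its quasi-coherent cohomology sheaves $H^i(\sG)$ by iterated cones along classes in $\operatorname{Ext}$-groups between quasi-coherent complexes (the Postnikov tower of its truncations $\tau_{\le n}\sG$), and full faithfulness lets one realise each such class, hence each cone, inside $D^+(\QC(\sX))$; one treats $D^b$ first and passes to $D^+$ by a colimit argument.

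The heart of the proof is therefore the comparison of $\operatorname{Ext}$-groups. I would use that $\QC(\sX)$ is a Grothendieck abelian category --- hence has enough injectives --- and that it is a weak Serre subcategory of $\Mod(\sX)$ whose associated bounded-below derived category is exactly $D^+_{\qc}(\sX)$. By the standard criterion for when the derived category of a weak Serre subcategory embeds fully faithfully (see, e.g., \cite{SP}), it suffices to show that every quasi-coherent sheaf $\sF$, together with every monomorphism $\sF\hookrightarrow M$ in $\Mod(\sX)$, admits a quasi-coherent sheaf $\sF'$ with a monomorphism $\sF\hookrightarrow\sF'$ through which a morphism $M\to\sF'$ factors; equivalently, that injective objects of $\QC(\sX)$ remain sufficiently acyclic in $\Mod(\sX)$ --- on a Noetherian scheme this is the classical fact that they are injective $\sO$-modules. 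Over an affine scheme $\Spec A$ one can take $\sF'=\widetilde{\Gamma(\Spec A,M)}$, using that $N\mapsto\widetilde N$ is exact on $A$-modules and left adjoint to $\Gamma$; this amounts to the classical equivalence $D^+(\Mod(A))\xrightarrow{\ \simeq\ }D^+_{\qc}(\Spec A)$.

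To pass from the affine case to a general Noetherian stack $\sX$, I would pick a smooth atlas $U\to\sX$ with $U$ a disjoint union of affine schemes (which exists since $\sX$ is quasi-compact and quasi-separated), form the associated smooth hypercover $U_\bullet\to\sX$, and use that $R\Gamma(\sX,-)$ and more generally $R\Hom_{\Mod(\sX)}(\sH,-)$ for quasi-coherent $\sH$ are computed as totalisations of the corresponding functors on the schemes $U_n$, with the Noetherian hypothesis ensuring the finiteness needed to stay within quasi-coherent sheaves. The main obstacle --- and the only step that is not formal homological algebra --- is precisely this descent: controlling injective (or merely acyclic) quasi-coherent sheaves under restriction along the atlas and verifying that the degreewise affine constructions glue to a genuine quasi-coherent sheaf on the lisse-\'etale site of $\sX$. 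An alternative that sidesteps the hypercover bookkeeping is to invoke the coherator, i.e.\ a right adjoint $Q$ to $\iota_\sX$, and set $\sF'=Q(M)$; this again rests on a non-formal existence statement. In any case, the comparison for Noetherian algebraic stacks is available in the literature, and the argument above is in essence its proof.
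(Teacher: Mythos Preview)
Your overall architecture---reduce to the $\operatorname{Ext}$-comparison, then do a Postnikov d\'evissage for essential surjectivity---matches the paper's, and the paper carries out the essential surjectivity step exactly as you describe. Where you diverge is in the key step, the $\operatorname{Ext}$-comparison. You propose either a hypercover/totalisation argument or an appeal to the coherator, and you correctly flag these as the non-formal points; you then effectively defer to the literature. The paper instead gives a short direct argument: to show that $\iota_\sX$ preserves injectives, take a smooth atlas $u\colon U\to\sX$ with $U$ a Noetherian scheme and observe that $u^*\colon\Mod(\sX)\to\Mod(U)$ is \emph{exact} (because $(U\to\sX)$ is an object of $\LisEt(\sX)$, so $u^*$ is literally restriction), hence $u_*$ preserves injectives in $\Mod$. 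Now embed $u^*(\sF)$ into a quasi-coherent $\sH$ that is injective in $\Mod(U)$ (the scheme case, \cite[B.4]{TT}); then $u_*(\sH)$ is quasi-coherent (as $U$ is Noetherian) and injective in $\Mod(\sX)$, and the composite $\sF\hookrightarrow u_*u^*(\sF)\hookrightarrow u_*(\sH)$ exhibits the desired embedding. This single pushforward replaces your hypercover bookkeeping and sidesteps the existence of a coherator entirely; the only external input is the scheme case. Your route would also work, but it imports more machinery to avoid a one-line observation.
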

\begin{proof}
To show that $D^{+}(\QC(\sX)) \to D^{+}_{\qc}(\sX)$ is
full and faithful, it suffices, using standard reduction, to show that
the natural map ${\rm Ext}^i_{\QC(\sX)}(N,M) \to {\rm Ext}^i_{\Mod(\sX)}(N,M)$
is an isomorphism for all $i \in \Z$ for $N, M \in \QC(\sX)$.
Since this is clearly true for $i \le 0$, and since
$\iota_{\sX}: \QC(\sX) \to \Mod(\sX)$ is exact, it suffices to show that
this functor preserves injective objects.

Let $\sF$ be an injective quasi-coherent sheaf on $\sX$. Since a direct
summand of an injective object in $\Mod(\sX)$ is injective and, since
a quasi-coherent sheaf which is injective as a sheaf of $\sO_{\sX}$-modules
is also an injective quasi-coherent sheaf, it suffices to show that 
there is an inclusion $\sF \inj \sG$ in $\QC(\sX)$ such that 
$\iota_{\sX}(\sG)$ is injective in $\Mod(\sX)$.

Since $\sX$ is 
Noetherian, we can find a smooth atlas $u: U \to \sX$, where
$U$ is a Noetherian scheme. We can now find an inclusion
$u^*(\sF) \inj \sH$ in $\QC(U)$ such that $\sH$ is injective as a
sheaf of $\sO_U$-modules, by \cite[B.4]{TT}. We now consider the maps
\begin{equation}\label{eqn:Lurie*-0}
\sF \to u_*u^*(\sF) \to u_*(\sH).
\end{equation}
As $U$ is Noetherian, it is clear that $u_*(\sH)$ is a quasi-coherent sheaf on 
$\sX$.
Furthermore, $u_*$ has a left adjoint $u^*: \Mod(\sX) \to \Mod(U)$
which preserves quasi-coherent sheaves. Since $(u: U \to \sX)$ is an object
of $\LisEt(\sX)$, it follows that 
$u^*: \Mod(\sX) \to \Mod(U)$ is exact. In particular,
$u_*: \Mod(U) \to \Mod(\sX)$ has an exact left adjoint.
This implies that it must preserve injective sheaves. It follows that
$u_*(\sH)$ is a quasi-coherent sheaf on $\sX$ which is injective as
a sheaf of $\sO_{\sX}$-modules. 

Letting $\sG = u_*(\sH)$,
we are now left with showing that the two maps in ~\eqref{eqn:Lurie*-0}
are injective. The first map is injective because $u$ is faithfully flat and
$\sF$ is quasi-coherent. The second map is injective because
$u_*: \QC(U) \to \QC(\sX)$ is left exact and hence preserves injections.

To show that the functor $D^{+}(\QC(\sX)) \to D^{+}_{\qc}(\sX)$ is 
essentially surjective, we can use its full and faithfulness shown above
and an induction on the length to first see that 
$D^{b}(\QC(\sX)) \xrightarrow{\simeq} D^{b}_{\qc}(\sX)$.
Since every object of $D^+_{\qc}(\sX)$ is a colimit of objects
in $D^{b}_{\qc}(\sX)$ (using good truncations), 
a limit argument concludes the proof.
\end{proof}

\begin{lem}\label{lem:compact-gen*}
Let $\sX$ be as in \lemref{lem:Lurie*} and let $P \in  D(\QC(\sX))$ be a compact
object. Then the following hold.
\begin{enumerate}
\item
There exists an integer $r \ge 0$ such that $\Hom_{D_{\qc}(\sX)}(P, N[i]) = 0$
for all $N \in \QC(\sX)$ and $i > r$.
\item
There exists an integer $r \ge 0$ such that the natural map
\[
\tau^{\ge j}{\rm RHom}_{D_{\qc}(\sX)}(P, M) \to
\tau^{\ge j}{\rm RHom}_{D_{\qc}(\sX)}(P, \tau^{\ge j-r} M)
\]
is a quasi-isomorphism for all $M \in D_{\qc}(\sX)$ and integers $j$.
\item
There exists an integer $r \ge 0$ such that the natural map
\[
\tau^{\ge j}{\rm RHom}_{D(\QC(\sX))}(P, M) \to
\tau^{\ge j}{\rm RHom}_{D(\QC(\sX))}(P, \tau^{\ge j-r} M)
\] 
is a quasi-isomorphism for all $M \in D(\QC(\sX))$ and integers $j$.
\end{enumerate}
\end{lem}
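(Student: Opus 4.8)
The three statements are tightly linked: (1) is a "bounded cohomological dimension" statement for compact objects, (2) is its consequence via dévissage over the truncation filtration, and (3) is the transfer of (2) from $D_{\qc}(\sX)$ to $D(\QC(\sX))$ using \lemref{lem:Lurie*}. I would prove them in this order.

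First, for (1): a compact object $P\in D(\QC(\sX))$ is, after applying \lemref{lem:Lurie*} on bounded-below objects, identified with a compact object of $D_{\qc}(\sX)$, hence a perfect complex by \propref{prop:cpt-obj-D_(qc)(X on Z)} (using that $\sX$ Noetherian is in particular perfect when it lies in the relevant class — but actually we only need $\sX$ perfect here, and the lemma is stated for Noetherian $\sX$, so compactness of $P$ already gives perfectness). A perfect complex has, Nisnevich-locally (indeed smooth-locally), Tor-amplitude in some bounded range $[a,b]$; choosing a smooth atlas $u\colon U\to\sX$ with $U$ affine Noetherian and of bounded cohomological dimension for quasi-coherent sheaves (which holds by \lemref{lem:Concentrated}, since $u$ is representable), one gets that $s^*P$ is a bounded complex of projectives for each affine $s\colon\Spec A\to\sX$, and then $\Hom_{D_{\qc}(\sX)}(P,N[i]) = H^i\,\mathrm{RHom}(P,N)$ vanishes for $i$ larger than $b$ plus the cohomological dimension of the atlas. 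Concretely, one uses the spectral sequence computing cohomology on $\sX$ from cohomology on the Čech nerve of $u$; finitely many terms contribute, each bounded, giving the uniform bound $r$.

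Next, (2) follows formally from (1). Given $M\in D_{\qc}(\sX)$ and $j\in\Z$, the fiber sequence $\tau^{<j-r}M \to M \to \tau^{\ge j-r}M$ yields a fiber sequence on $\mathrm{RHom}(P,-)$. It suffices to show $\tau^{\ge j}\mathrm{RHom}(P,\tau^{<j-r}M)=0$, i.e.\ that $\mathrm{RHom}(P,\tau^{<j-r}M)$ has cohomology only in degrees $<j$. Writing $\tau^{<j-r}M$ as a (homotopy) limit of its Postnikov truncations, or more simply filtering it by its cohomology sheaves $H^n(M)[-n]$ for $n<j-r$, and applying (1) to each $H^n(M)\in\QC(\sX)$: each contributes to $\mathrm{RHom}(P,H^n(M)[-n])$ only in degrees $\le r-n < r-(j-r)+\ldots$ — more precisely in degrees $\le n+r < j$. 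A convergence argument for the hyper-Ext spectral sequence (bounded above in the relevant range) gives the claim. This is where a little care is needed: $\tau^{<j-r}M$ is unbounded below, so one must check that the spectral sequence or the relevant homotopy limit converges; this is standard since in each fixed total degree $<j$ only finitely many cohomology sheaves of $M$ contribute, by the uniform bound from (1).

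Finally, (3) is obtained by running the same argument inside $D(\QC(\sX))$ rather than $D_{\qc}(\sX)$, using that on bounded-below objects the two categories agree (\lemref{lem:Lurie*}), so that $\mathrm{RHom}_{D(\QC(\sX))}(P,-)$ and $\mathrm{RHom}_{D_{\qc}(\sX)}(P,-)$ agree for bounded-below arguments, and in particular the vanishing statement in (1) transfers verbatim; then the dévissage of (2) goes through identically, the only point being that good truncations and the filtration by cohomology objects make sense in $D(\QC(\sX))$ as well. The main obstacle, such as it is, is step (1): pinning down the uniform cohomological-dimension bound for $\mathrm{RHom}$ out of a perfect complex, which requires combining the local Tor-amplitude of $P$ with the finite cohomological dimension of a smooth atlas (via \lemref{lem:Concentrated}) in the descent spectral sequence. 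Everything after that is formal homological algebra with truncations.
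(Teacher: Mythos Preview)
Your approach is correct and matches the paper's, which simply cites \cite[Lemma~4.5]{HR-2} for (1) and (2) (that lemma carries out exactly the argument you sketch: perfect $\Rightarrow$ locally bounded Tor-amplitude, combined with finite cohomological dimension of a representable atlas) and invokes the proof of \cite[Lemma~2.4]{HNR} for the truncation d\'evissage giving (1)~$\Rightarrow$~(3). One small correction to your parenthetical: you do not need $\sX$ to be a perfect stack to know that compact objects of $D_{\qc}(\sX)$ are perfect---as noted in the proof of \propref{prop:cpt-obj-D_(qc)(X on Z)}, that implication only uses that a smooth atlas is representable, so the Noetherian hypothesis of \lemref{lem:Lurie*} suffices.
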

\begin{proof}
It follows from \lemref{lem:Lurie*} that $\iota_{\sX}$ induces an equivalence
between the derived categories of perfect complexes of quasi-coherent
sheaves and perfect complexes of $\sO_{\sX}$-modules.
Since the compact objects of $D_{\qc}(\sX)$ are perfect 
\cite[Proposition~2.7]{KR-2}, it follows 
that $D(\QC(\sX))$ and $D_{\qc}(\sX)$ have equivalent full subcategories
of compact objects.
The parts (1) and (2) now follow from \cite[Lemma~4.5]{HR-2} and the 
proof of \cite[Lemma~2.4]{HNR} shows that (1) implies (3) for any
stack.
\end{proof}

\begin{lem}\label{lem:Lurie-unbounded}
Let $\sX$ be a Noetherian stack such that $D_{\qc}(\sX)$ is compactly 
generated.
Then $\iota_{\sX}: \QC(\sX) \to \Mod(\sX)$ induces 
an equivalence of the unbounded 
derived categories $D(\QC(\sX)) \xrightarrow{\simeq} 
D_{\qc}(\sX)$.
\end{lem}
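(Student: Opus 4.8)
The plan is to bootstrap from the bounded case (\lemref{lem:Lurie*}) to the unbounded case, using compact generation to control the passage to infinity. First I would recall that \lemref{lem:Lurie*} already gives an equivalence $D^+(\QC(\sX)) \xrightarrow{\simeq} D^+_\qc(\sX)$, and that both $D(\QC(\sX))$ and $D_\qc(\sX)$ are compactly generated with the \emph{same} subcategory of compact objects (the perfect complexes of quasi-coherent sheaves), as noted in the proof of \lemref{lem:compact-gen*}. So the functor $\iota_\sX\colon D(\QC(\sX)) \to D_\qc(\sX)$ is a triangulated functor between compactly generated categories that restricts to an equivalence on compact objects and preserves coproducts (being a left derived functor, or simply because both sides are computed by the same complexes). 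By the standard argument (e.g.\ Neeman's theorem that a coproduct-preserving functor which is fully faithful on a generating set of compact objects and whose image generates is an equivalence), it would suffice to prove that $\iota_\sX$ is fully faithful, i.e.\ that $\Hom_{D(\QC(\sX))}(M,N) \to \Hom_{D_\qc(\sX)}(M,N)$ is an isomorphism for all $M, N$.

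Next I would reduce full faithfulness to the case where $M$ is compact. Since the compact objects generate $D(\QC(\sX))$ and $\iota_\sX$ preserves coproducts and triangles, a colimit/dévissage argument reduces the general $M$ to compact $M$; then one is left to show $\Hom_{D(\QC(\sX))}(P, N) \xrightarrow{\simeq} \Hom_{D_\qc(\sX)}(P,N)$ for $P$ perfect and $N$ arbitrary in the unbounded derived category. Here is where \lemref{lem:compact-gen*} does the real work: parts (2) and (3) say that for a perfect $P$ there is an integer $r$ such that computing $\tau^{\ge j}\mathrm{RHom}(P,-)$ only depends on $\tau^{\ge j-r}(-)$, on both sides of the comparison. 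Fix a cohomological degree $i$; choose $j < i$ and replace $N$ by its truncation $\tau^{\ge j-r} N \in D^+$. By \lemref{lem:Lurie*} (in its RHom form, which follows from full faithfulness of $D^+(\QC(\sX)) \to D^+_\qc(\sX)$) the map of truncated RHom complexes is a quasi-isomorphism on the bounded-below truncation, hence agrees in degree $i$. Since $i$ was arbitrary, $\iota_\sX$ is fully faithful on $\Hom(P, N)$, as desired.

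I expect the main obstacle to be making the last reduction precise: one must check that the truncation functors $\tau^{\ge}$ on $D(\QC(\sX))$ and on $D_\qc(\sX)$ are compatible under $\iota_\sX$ (immediate since $\iota_\sX$ is exact and $t$-exact for the standard $t$-structures, as $\QC(\sX)\subset\Mod(\sX)$ is a full exact abelian subcategory closed under kernels and cokernels on a Noetherian stack), and that the integer $r$ in \lemref{lem:compact-gen*} can indeed be chosen uniformly for a fixed $P$ and independently of $N$ and $j$ — which is exactly what parts (2) and (3) of that lemma assert. The only other point requiring a little care is the initial claim that $D(\QC(\sX))$ is compactly generated by the perfect complexes of quasi-coherent sheaves; this follows from the equivalence on compact subcategories established in \lemref{lem:compact-gen*} together with the hypothesis that $D_\qc(\sX)$ is compactly generated, but one should state it cleanly. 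Everything else is a formal application of Neeman's recognition criterion for equivalences of compactly generated triangulated categories.
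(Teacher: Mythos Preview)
Your key technical step---using parts (2) and (3) of \lemref{lem:compact-gen*} to reduce the comparison $\mathrm{RHom}_{D(\QC(\sX))}(P,N)\to\mathrm{RHom}_{D_\qc(\sX)}(P,N)$ for compact $P$ to the bounded-below case, and then invoking \lemref{lem:Lurie*}---is exactly what the paper does. The difference is in the surrounding logic, and there your argument has a genuine gap.

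You assert, and later try to justify, that $D(\QC(\sX))$ is compactly generated, and you use this to run the d\'evissage reducing arbitrary $M$ to compact $M$. But your justification (``this follows from the equivalence on compact subcategories established in \lemref{lem:compact-gen*} together with the hypothesis that $D_\qc(\sX)$ is compactly generated'') is not valid: knowing that two triangulated categories have equivalent subcategories of compact objects, and that one of them is compactly generated, says nothing about whether the other is. So as written, your d\'evissage step is unsupported, and the argument is circular: you need compact generation of $D(\QC(\sX))$ to reduce to compact $M$, but you have not proved it.

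The paper avoids this by reorganizing. It does \emph{not} attempt to prove full faithfulness for arbitrary $M$; instead it observes that $\Psi$ is conservative (since $\iota_\sX$ is exact and faithful on the abelian level), and then uses your $\mathrm{RHom}$ comparison for compact $P$ directly to show: if $\Hom_{D(\QC(\sX))}(P,M)=0$ for all compact $P$, then $\Hom_{D_\qc(\sX)}(\Psi(P),\Psi(M))=0$ for all compact $\Psi(P)$, hence $\Psi(M)=0$ by compact generation of $D_\qc(\sX)$, hence $M=0$ by conservativity. This proves $D(\QC(\sX))$ is compactly generated, and then \cite[Lemma~4.5]{BIK} (or Neeman's criterion, as you say) finishes. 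You have all the ingredients; you just need to invoke conservativity of $\Psi$ and swap the order of the last two paragraphs.
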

\begin{proof}
Let $\Psi: D(\QC(\sX)) \to D_{\qc}(\sX)$ denote the derived functor
induced by $\iota_{\sX}$. 
We have shown in the proof of \lemref{lem:compact-gen*} that $\Psi$
restricts to an equivalence between the full subcategories
of compact objects. Using \cite[Lemma~4.5]{BIK}, it suffices
therefore to show that $D(\QC(\sX))$ is compactly generated.

So let $M \in D(\QC(\sX))$ be such that $\Hom_{D(\QC(\sX))}(P, M) = 0$
for every compact object $P$. We need to show that $M = 0$.
Since any compact object of $D(\QC(\sX))$ is 
perfect, and $\Psi$ is conservative and induces equivalence of compact
objects, it suffices to show that ${\rm RHom}(P, M) \xrightarrow{\simeq}
{\rm RHom}(\Psi(P), \Psi(M))$ for every perfect complex $P$.
Equivalently, we need to show that for every integer $j$, 
the map $\tau^{\ge j}{\rm RHom}(P, M) \to 
\tau^{\ge j}{\rm RHom}(\Psi(P), \Psi(M))$ is a quasi-isomorphism.
\lemref{lem:compact-gen*} now allows us to assume that
$M \in D^+(\QC(\sX))$. But in this case, the result follows from
\lemref{lem:Lurie*}.
\end{proof}

For a Noetherian stack $\sX$, let $G^{\rm naive}(\sX)$ denote the $K$-theory
spectrum of the exact category of coherent $\sO_{\sX}$-modules in the sense
of Quillen, and let $G(\sX)$ be the $K$-theory spectrum of the
complicial biWaldhausen category of cohomologically bounded pseudo-coherent 
complexes in $\Ch_{\qc}(\sX)$, in the sense of \cite[\S1.5.2]{TT}.
We have a natural map of spectra $G^{\rm naive}(\sX) \to G(\sX)$.

\begin{lem}\label{lem:G-thry}
Let $\sX$ be a Noetherian stack such that $D_{\qc}(\sX)$ is compactly generated.
Then the map $G^{\rm naive}(\sX) \to G(\sX)$ is a homotopy equivalence.
\end{lem}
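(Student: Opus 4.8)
The plan is to reduce to the comparison theorem for schemes via a standard dévissage/coherator argument, in the spirit of \cite[\S3]{TT}. First I would recall that by \lemref{lem:Lurie-unbounded} the inclusion $\iota_\sX\colon\QC(\sX)\to\Mod(\sX)$ induces an equivalence $D(\QC(\sX))\xrightarrow{\simeq}D_\qc(\sX)$, so in particular the cohomologically bounded pseudo-coherent complexes in $\Ch_\qc(\sX)$ are identified, up to quasi-isomorphism, with cohomologically bounded complexes of quasi-coherent sheaves with coherent cohomology. Since $\sX$ is Noetherian, $\QC(\sX)$ is a locally Noetherian Grothendieck category whose Noetherian objects are exactly the coherent sheaves, and every quasi-coherent sheaf is the filtered colimit of its coherent subsheaves. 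The spectrum $G(\sX)$ is the $K$-theory of the complicial biWaldhausen category of cohomologically bounded pseudo-coherent complexes, while $G^{\na}(\sX)$ is Quillen $K$-theory of the exact category $\Coh(\sX)$.

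The key step is to invoke the general comparison between Quillen $K$-theory of an abelian category with enough "small" objects and Waldhausen $K$-theory of its bounded derived category. Concretely, I would apply the Gillet--Waldhausen theorem (see \cite[Theorem~1.11.7]{TT}) together with the resolution theorem: the inclusion of bounded complexes of coherent sheaves into cohomologically bounded pseudo-coherent complexes induces an equivalence on $K$-theory because every cohomologically bounded complex with coherent cohomology is, over a Noetherian stack, quasi-isomorphic to a bounded complex of coherent sheaves. This last point is where one uses that $\sX$ is Noetherian: one truncates and then, working downward, resolves each cohomology sheaf by coherent sheaves, exactly as in the scheme case \cite[Corollary~1.11.8]{TT}. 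Thus $G(\sX)$ is the $K$-theory of the complicial biWaldhausen category of bounded complexes of coherent sheaves, which by Gillet--Waldhausen agrees with the Quillen $K$-theory $G^{\na}(\sX)$ of $\Coh(\sX)$.

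The main obstacle is verifying that the homological algebra inputs of \cite{TT}—specifically the existence of bounded coherent resolutions and the Gillet--Waldhausen identification—go through for the lisse-étale site of a Noetherian algebraic stack rather than a Noetherian scheme. The resolution step needs that $\Coh(\sX)$ generates $\QC(\sX)$ under filtered colimits and that cohomology sheaves of a pseudo-coherent complex are coherent; both hold for Noetherian $\sX$ by \cite{Rydh-approx} and the definition of pseudo-coherence. The compact generation hypothesis on $D_\qc(\sX)$ enters only through \lemref{lem:Lurie-unbounded}, ensuring the quasi-coherent and $\sO_\sX$-module derived categories agree so that the two definitions of $G$-theory via $\QC$ and via $\Mod$ coincide. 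Once these are in place, the argument is formally identical to \cite[\S3.9--3.11]{TT}, and I would simply cite those results with the remark that the only modification needed is to replace "Noetherian scheme" by "Noetherian stack", the relevant facts about $\QC(\sX)$ and $\Coh(\sX)$ having been established above.
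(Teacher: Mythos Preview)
Your proposal is correct and follows essentially the same route as the paper: use \lemref{lem:Lurie-unbounded} to replace $\Ch_\qc(\sX)$ by $\Ch(\QC(\sX))$, then show that the inclusion $\Ch^b(\Coh(\sX))\hookrightarrow\Ch_\pc(\QC(\sX))$ is a $K$-theory equivalence (the paper cites \cite[Lemma~3.12]{TT} for this step, using that every quasi-coherent sheaf is a filtered colimit of coherent subsheaves), and finally identify $K(\Ch^b(\Coh(\sX)))$ with $G^{\na}(\sX)$ via Gillet--Waldhausen. The only cosmetic difference is that the paper points to \cite[Proposition~15.4]{LM} rather than \cite{Rydh-approx} for the filtered-colimit property and phrases the last step as an induction on the length of complexes.
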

\begin{proof}
It follows from 
Lemma~\ref{lem:Lurie-unbounded} that $G(\sX)$ is homotopy equivalent to
the $K$-theory of the Waldhausen category $\Ch_\pc(\QC(\sX))$ 
of cohomologically bounded pseudo-coherent chain complexes of quasi-coherent sheaves
on $\sX$. Let $\Ch^b(\Coh(\sX))$ denote the Waldhausen category of bounded complexes of
coherent $\sO_{\sX}$-modules.

Using the fact that every quasi-coherent sheaf on $\sX$ is a filtered
colimit of coherent subsheaves \cite[Proposition~15.4]{LM}, 
we can mimic the proof of \cite[Lemma~3.12]{TT} to conclude that
the inclusion $\Ch^b(\Coh(\sX)) \inj \Ch_\pc(\QC(\sX))$ induces
a homotopy equivalence between the associated $K$-theory spectra.
Since $G^{\rm naive}(\sX) \to K(\Ch^b(\Coh(\sX)))$ is also a homotopy
equivalence, by induction on the length of complexes in
$\Ch^b(\Coh(\sX))$, we conclude the proof.
\end{proof}

\begin{lem}\label{lem:P-duality}
Let $\sX$ be a Noetherian regular stack. Then the canonical map
of spectra $K(\sX) \to G(\sX)$ is a homotopy equivalence.
\end{lem}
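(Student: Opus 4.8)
The plan is to reduce to the classical Poincaré duality theorem of Thomason--Trobaugh for schemes by pulling back along a smooth atlas, exactly as the preceding lemmas reduce questions about $D_\qc(\sX)$ to the case of schemes. First I would observe that since $\sX$ is Noetherian and regular, it is in particular perfect: being regular forces every cohomologically bounded pseudo-coherent complex (in particular every coherent sheaf) to be perfect, which is the stacky analogue of \cite[Corollary 3.21]{TT}. Concretely, for any affine smooth atlas $s\colon U=\Spec(A)\to\sX$, regularity of $\sX$ means $A$ is regular, so $s^*$ of a cohomologically bounded pseudo-coherent complex has bounded coherent cohomology and hence finite Tor-dimension, i.e.\ is perfect in the sense of \cite{TT}; this is a local condition on the atlas, so the complex is perfect on $\sX$. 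Thus the inclusion of perfect complexes into cohomologically bounded pseudo-coherent complexes in $\Ch_{\qc}(\sX)$ is an equivalence of complicial biWaldhausen categories up to quasi-isomorphism, and applying Thomason--Trobaugh $K$-theory to this inclusion (using \cite[Theorem 1.9.8]{TT}, the approximation-type invariance) yields a homotopy equivalence $K(\sX)\xrightarrow{\simeq} G(\sX)$, where the latter is by definition the $K$-theory of the cohomologically bounded pseudo-coherent complexes.

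In more detail, I would proceed in the following steps. (i) Show $\sX$ is perfect: a Noetherian regular stack has compactly generated $D_\qc(\sX)$ — this can be deduced by choosing a smooth atlas and invoking the results already available, or more simply by noting regular Noetherian stacks in our setting satisfy the hypotheses of the earlier perfectness criteria — and $\sO_\sX$ is compact by \lemref{lem:Concentrated} together with \thmref{thm:HRydh-1}; actually for the statement of this lemma one does not strictly need full perfectness, only the identification of the two Waldhausen categories. (ii) The key geometric input: for every object $(s\colon U\to\sX)$ of $\LisEt(\sX)$ with $U$ affine, the ring $\sO(U)$ is regular (smooth morphisms preserve regularity, and $\sX$ regular means some — hence any — smooth atlas is regular), so by \cite[Corollary 3.21]{TT} every bounded complex of finitely generated $\sO(U)$-modules with coherent cohomology is perfect. (iii) Conclude that $P\in\Ch_{\qc}(\sX)$ is perfect if and only if it is cohomologically bounded and pseudo-coherent, since both conditions are checked on a smooth atlas and coincide there by (ii). (iv) Both $K(\sX)$ and $G(\sX)$ are defined as $K$-theory spectra of complicial biWaldhausen subcategories of $\Ch_{\qc}(\sX)$ with quasi-isomorphisms as weak equivalences, and by (iii) these subcategories have the same objects up to quasi-isomorphism, so the canonical map is a homotopy equivalence by \cite[Theorem 1.9.8]{TT}.

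The main obstacle I anticipate is making precise the claim in step (ii)--(iii) that perfectness and bounded-pseudo-coherence for complexes in $\Ch_{\qc}(\sX)$ are both \emph{detectable on a smooth atlas} in a way that is genuinely local, since $\Ch_{\qc}(\sX)$ involves sheaves on the lisse-étale site rather than a single ring; this requires knowing that pseudo-coherence and finite Tor-dimension descend along and are detected by smooth covers — a fact that is standard for schemes and for which the stacky version should follow from flat descent for $\mathsf D_\qc$ recalled in \secref{sec:perfect-complexes}, but needs to be invoked carefully. Once that compatibility is in hand, the rest is a direct transcription of the scheme-theoretic Poincaré duality argument of \cite[\S3]{TT}.
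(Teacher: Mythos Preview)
Your proposal is correct and follows essentially the same route as the paper: reduce to showing that every cohomologically bounded pseudo-coherent complex on $\sX$ is perfect, pull back along a smooth affine atlas $u\colon U\to\sX$, invoke regularity of $U$ and \cite[Theorem~3.21]{TT}, and conclude perfectness on $\sX$. Your step~(i) about $\sX$ being a perfect stack is unnecessary (as you yourself note), and the ``obstacle'' you anticipate is not one---perfectness on $\sX$ is \emph{defined} (\defref{defn:Perfect-complex}) by the condition that $s^*(P)$ be perfect for every smooth $s\colon\Spec(A)\to\sX$, so no descent argument is needed.
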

\begin{proof}
As for schemes \cite[Theorem~3.21]{TT}, it suffices to show that every
cohomologically bounded pseudo-coherent complex $E^{\bullet}$ on $\sX$ is
perfect. Let $u: U \to \sX$ be a smooth atlas such that $U$ is affine.
Since $(u:U \to \sX)$ is an object of $\LisEt(\sX)$,
the functor $u^*: \Mod(\sX) \to \Mod(U)$ is exact and preserves
coherent sheaves. It follows that $u^*(E^{\bullet})$ is a cohomologically 
bounded pseudo-coherent complex on $U$. Since $U$ is a regular scheme, 
we conclude 
from the proof of \cite[Theorem~3.21]{TT} that $u^*(E^{\bullet})$ is perfect. 
But this implies that $E^{\bullet}$ is perfect on $\sX$.  
\end{proof}

\begin{thm}\label{thm:smooth-K}
Let $\sX$ be a Noetherian regular stack such that $D_{\qc}(\sX)$ is compactly
generated. Then the following hold.
\begin{enumerate}
\item
The canonical maps $K(\sX) \to G(\sX) \leftarrow G^{\rm naive}(\sX)$
are homotopy equivalences.
\item
For any vector bundle $\sE$ on $\sX$ and any $\sE$-torsor $\pi: \sY \to \sX$, 
the pullback map 
$K(\sX) \to K(\sY)$ is a homotopy equivalence.
\item The canonical morphisms of spectra
\[
K(\sX) \to K^B(\sX) \to KH(\sX)
\]
are homotopy equivalences. In particular, $K_i(\sX) = KH_i(\sX) = 0$
for $i < 0$.
\end{enumerate}
\end{thm}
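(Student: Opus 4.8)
The plan is to establish the three parts in order, each building on the previous. For part (1), I would combine the three lemmas just proved: \lemref{lem:P-duality} gives that $K(\sX)\to G(\sX)$ is a homotopy equivalence since $\sX$ is regular, and \lemref{lem:G-thry} gives that $G^{\mathrm{naive}}(\sX)\to G(\sX)$ is a homotopy equivalence since $D_{\qc}(\sX)$ is compactly generated. This is essentially immediate once the lemmas are in hand.

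For part (2), the idea is to reduce the torsor statement to a statement about $G$-theory, where homotopy invariance is more tractable. Using part (1), it suffices to show that $\pi^*: G(\sX)\to G(\sY)$ is a homotopy equivalence for $\pi:\sY\to\sX$ an $\sE$-torsor (note $\sY$ is again Noetherian and regular, and is perfect by \propref{prop:perfect} (2) since $\pi$ is an affine — hence schematic with relatively ample family — morphism, so $D_{\qc}(\sY)$ is compactly generated too). For $G$-theory, homotopy invariance along vector bundle torsors follows from the localization and devissage properties of $G$-theory for Noetherian stacks: locally on $\sX$ (in the smooth topology, or after stratifying) the torsor is a vector bundle, and $G$-theory of Noetherian stacks satisfies homotopy invariance for vector bundles by the standard argument (projection formula plus the extension/devissage theorem), and one upgrades from vector bundles to torsors by a Zariski/Nisnevich-local-to-global argument via the localization sequence for $G$-theory. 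Alternatively, and perhaps more cleanly, I would reduce to the affine-atlas case: choosing a smooth atlas $u:U\to\sX$ with $U$ affine, the torsor pulls back to a vector bundle torsor over $U$, which is affine, hence $U\times_\sX\sY$ is an affine scheme on which $K=G$ is homotopy invariant, and then descend.

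For part (3), I would argue that regularity forces the Bass delooping and the $KH$-colimit to be trivial. Since $K(\sX)$ already has vanishing negative homotopy groups (as noted after \thmref{thm:continuity}), and since by part (2) the maps $K(\sX\times\Delta^n)\to K(\sX)$ are homotopy equivalences for all $n$ (each $\Delta^n$ is an affine space, so $\sX\times\Delta^n\to\sX$ is a vector bundle, a special case of a torsor), the simplicial object $n\mapsto K(\sX\times\Delta^n)$ is degreewise equivalent to the constant one $K(\sX)$, so its homotopy colimit is $K(\sX)$; but this homotopy colimit computes $KH(\sX)$ once one knows $K^B=K$ on the relevant stacks. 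To get $K(\sX)\simeq K^B(\sX)$, I would use the Bass fundamental theorem (\thmref{thm:Bass-fund}): the negative $K$-groups are built from the failure of the map $K_0(\sX)\to K_0(\sX[T])\oplus K_0(\sX[T^{-1}])\to K_0(\sX[T^{\pm1}])$ to be surjective (iterated), and by part (2) applied to the affine-bundle $\sX\times\A^1\to\sX$ together with the localization sequence for $(\sX\times\A^1,\sX)$, one shows $K_n(\sX[T])\cong K_n(\sX)$ and deduces via the Bass sequence that $\partial_T$ vanishes and the delooping is trivial; hence $K_i(\sX)=0$ for $i<0$, and then $K\to K^B\to KH$ are all equivalences.

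\emph{Main obstacle.} The technical heart is part (2): establishing homotopy invariance of $K$-theory (equivalently $G$-theory) along vector bundle \emph{torsors} — not just bundles — for a regular Noetherian stack. The bundle case is classical, but torsors are only Zariski/Nisnevich-locally trivial, so one must run a local-to-global argument using the localization sequence (\thmref{thm:Localization}) and excision (\thmref{thm:Excision}) for perfect stacks, checking at each stratum that the restricted torsor becomes a bundle after passing to the reduced locally closed substack. Making this stratification argument work uniformly — and verifying that all the intermediate stacks appearing are still perfect pairs so that the localization machinery applies — is where the real care is needed; everything else is a formal consequence of results already established in the excerpt.
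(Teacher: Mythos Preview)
Your treatment of part (1) is correct and matches the paper. Part (3) is also essentially right in outline, though note a small wrinkle: you cannot invoke \thmref{thm:Bass-fund} directly, because that theorem assumes $\sX$ is perfect (i.e., $\sO_\sX$ compact), whereas here only compact generation of $D_{\qc}(\sX)$ is assumed. The paper instead rederives the Bass sequences from scratch using part (1) and the Quillen localization sequence for $G^{\rm naive}$ along $\sX[T]\hookrightarrow\P^1_\sX$ and $\sX[T^{\pm1}]\hookrightarrow\sX[T]$, together with the projective bundle formula; this is morally what you sketch, so this is a minor point.

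The genuine gap is in part (2). Your proposed local-to-global or descent arguments do not go through, and this is not just a matter of care. A torsor under a vector bundle on a stack need \emph{not} trivialize Zariski- or Nisnevich-locally: consider $\sX=\sB G$ for $G$ a group scheme, where the Nisnevich topology (as defined in the paper, preserving residual gerbes) gives no nontrivial covers, yet $H^1(\sB G,\sE)$ can be nonzero. Your alternative of pulling back to an affine atlas $U\to\sX$ trivializes the torsor over $U$, but $G$-theory does not satisfy smooth or fppf descent, so there is no way to ``descend'' the equivalence back to $\sX$. Thus neither route closes.

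The paper's argument avoids locality entirely via a global trick due to Merkurjev: any $\sE$-torsor $\sY\to\sX$ sits in a short exact sequence of vector bundles
\[
0\to\sE\to\sW\xrightarrow{\phi}\A^1_\sX\to 0
\]
with $\sY=\phi^{-1}(1)$, so that $\sY\cong\P(\sW)\setminus\P(\sE)$. One then applies the Quillen localization sequence
\[
G^{\rm naive}(\P(\sE))\to G^{\rm naive}(\P(\sW))\to G^{\rm naive}(\sY)
\]
together with the projective bundle formula for $G^{\rm naive}$ (valid for any algebraic stack) and part (1), and reads off that $K(\sX)\to K(\sY)$ is an equivalence. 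This single step replaces the entire stratification machinery you were worried about.
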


\begin{proof}
Part (1) of the theorem follows directly from Lemmas~\ref{lem:G-thry} and
~\ref{lem:P-duality}.
As shown in \cite[Theorem~2.11]{Merkurjev}, there exists a short
exact sequence of vector bundles
\[
0 \to \sE \to \sW \xrightarrow{\phi} \A^1_{\sX} \to 0
\]
such that $\sY = \phi^{-1}(1)$.
In particular, $\sY$ is the complement of the projective bundle
$\P(\sE)$ in $\P(\sW)$. It follows from our hypothesis and
\lemref{lem:Quasi-proj} that $\P(\sW)$ is a Noetherian regular stack
such that $D_{\qc}(\P(\sW))$ is compactly generated. The same holds
for $\P(\sE)$ as well.
The Quillen localization sequence
\begin{equation}\label{eqn:smooth-K-0}
G^{\rm naive}(\P(\sE)) \to G^{\rm naive}(\P(\sW))\to G^{\rm naive}(\sY)
\end{equation}
and the projective bundle formula \cite[Theorem~3.8]{KR-2} now
prove (2).

By (1) and the Quillen localization
sequence for $G^{\rm naive}(-)$ associated to the inclusions
$\sX[T] \inj \P^1_{\sX}$ and $\sX[T^{\pm 1}] \inj \sX[T]$, 
we see that the Bass fundamental theorem holds for $\sX$
and moreover that the sequence (3) of \thmref{thm:Bass-fund} 
is a short exact sequence. 
This implies that one can define $K^B(\sX)$ as in \S\ref{sec:Neg-K},
and moreover that $K(\sX) \xrightarrow{\simeq}K^B(\sX)$. 
On the other hand, it follows from (2) that
$K^B(\sX) \xrightarrow{\simeq} K^B(\sX \times \Delta^n)$ for every $n \ge 0$,
which implies that $K^B(\sX) \xrightarrow{\simeq} KH(\sX)$. The last assertion of (3) holds
because $K(\sX)$ has no negative homotopy groups.
\end{proof}

\section{Cdh-descent for homotopy $K$-theory}
\label{sec:cdh-descent}

We denote by $\mathbf{Stk}'$
the category of stacks $\sX$ satisfying one of the following conditions:
\begin{itemize}
\item $\sX$ has separated diagonal and linearly reductive finite stabilizers.
\item $\sX$ has affine diagonal and linearly reductive almost multiplicative stabilizers.
\end{itemize}
By Corollaries~\ref{cor:CP-exm} and~\ref{cor:Nis-local-aff-1}, for every $\sX\in\mathbf{Stk}'$
and every closed substack $\sZ\subset\sX$ with quasi-compact open complement, the pair
$(\sX,\sZ)$ is perfect. Moreover, by~\thmref{thm:Nis-local-aff}, $\sX$ admits a
 Nisnevich covering by quotient stacks $[U/G]$ where $U$ is affine over an affine scheme
 $S$ and $G$ is a linearly reductive almost multiplicative group scheme over $S$

Note that if $\sX\in\mathbf{Stk}'$ and $\sY\to \sX$ is a representable morphism with affine diagonal,
then also $\sY\in\mathbf{Stk}'$, since the stabilizers of $\sY$ are subgroups of the stabilizers of $\sX$.

\begin{lem}\label{lem:torsor-inv}
	Let $\sX$ be a stack in $\mathbf{Stk}'$
	and let $f:\sY\to \sX$ be a torsor under a vector bundle. Then
	\[
	f^*: KH(\sX) \to KH(\sY)
	\]
	is a homotopy equivalence.
\end{lem}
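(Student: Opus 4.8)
The statement to prove is that for $\sX\in\mathbf{Stk}'$ and a torsor $f:\sY\to\sX$ under a vector bundle $\sE$, the map $f^*:KH(\sX)\to KH(\sY)$ is an equivalence. The plan is to reduce this to the vector-bundle invariance of $KH$ already established in~\thmref{thm:KH-Inv}~(6), which applies to any perfect stack. The key point is that a torsor under a vector bundle is not itself a vector bundle in general, but it becomes one after a suitable base change, and more usefully it can be realized geometrically using the same trick as in~\thmref{thm:smooth-K}~(2): by~\cite[Theorem~2.11]{Merkurjev} there is a short exact sequence of vector bundles $0\to\sE\to\sW\xrightarrow{\phi}\A^1_\sX\to 0$ on $\sX$ with $\sY=\phi^{-1}(1)$, so that $\sY$ is the open complement of the projective subbundle $\P(\sE)\subset\P(\sW)$.

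First I would check that all the stacks in sight lie in $\mathbf{Stk}'$ and are therefore perfect: $\sE$, $\sW$, $\sY$, $\P(\sE)$, and $\P(\sW)$ are all obtained from $\sX$ by representable morphisms with affine diagonal (vector bundles, projective bundles, open immersions), so by the remark preceding this lemma they all belong to $\mathbf{Stk}'$; in particular, together with Corollaries~\ref{cor:CP-exm} and~\ref{cor:Nis-local-aff-1}, the relevant pairs are perfect. Next I would run the localization sequence of~\thmref{thm:KH-Inv}~(1) for the pair $(\P(\sW),\P(\sE))$ with open complement $\sY$, giving a homotopy fiber sequence
\[
KH(\P(\sW)\ {\rm on}\ \P(\sE))\to KH(\P(\sW))\to KH(\sY).
\]
The projective bundle formula~\thmref{thm:KH-Inv}~(3) computes $KH(\P(\sW))$ as $\prod_{0}^{r}KH(\sX)$ where $r=\rank(\sE)$, and similarly for $\P(\sE)$; moreover $KH(\P(\sW)\ {\rm on}\ \P(\sE))$ can be identified, via the self-intersection/deformation computation (as in the scheme case, e.g.\ using~\thmref{thm:KH-Inv}~(4) for the blow-up of $\P(\sW)$ along $\P(\sE)$ if needed, or directly via the projective bundle structure of $\P(\sE)$ as a divisor), with $KH(\P(\sE))\simeq\prod_{0}^{r-1}KH(\sX)$, and the map $KH(\P(\sW)\ {\rm on}\ \P(\sE))\to KH(\P(\sW))$ matches the inclusion of the first $r$ factors into $\prod_0^r KH(\sX)$. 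It follows that $KH(\sY)\simeq KH(\sX)$ and one checks the equivalence is induced by $f^*$.

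Alternatively — and this is cleaner — I would argue that $\sW\to\sX$ is a vector bundle, hence $u^*:KH(\sX)\xrightarrow{\simeq}KH(\sW)$ by~\thmref{thm:KH-Inv}~(6), and likewise $\sE\to\sX$ gives $KH(\sX)\xrightarrow{\simeq}KH(\sE)$. The map $\phi:\sW\to\A^1_\sX$ exhibits $\sW$ as a vector bundle over $\A^1_\sX$ whose fiber over $0$ is $\sE$ and over $1$ is $\sY$; since $\A^1_\sX$ is a vector bundle over $\sX$, we get $KH(\A^1_\sX)\simeq KH(\sX)$, and restricting the vector-bundle equivalence $KH(\A^1_\sX)\xrightarrow{\simeq}KH(\sW)$ along the two sections $0,1:\sX\to\A^1_\sX$ (which induce the same map on $KH$ since $KH(\A^1_\sX)\simeq KH(\sX)$ and homotopic sections agree) identifies $KH(\sY)$ with $KH(\sE)\simeq KH(\sX)$ compatibly with $f^*$. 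The main obstacle is bookkeeping: making sure the composite identification is exactly $f^*$ and not merely an abstract equivalence, which requires tracking the sections and the vector-bundle structure carefully; the homotopy-theoretic inputs themselves are all already packaged in~\thmref{thm:KH-Inv}.
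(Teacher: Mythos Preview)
Both of your approaches differ from the paper's, and both have real gaps.

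In your first approach you correctly set up the localization fiber sequence for the perfect pair $(\P(\sW),\P(\sE))$, but the identification $KH(\P(\sW)\ {\rm on}\ \P(\sE))\simeq KH(\P(\sE))$ is precisely the hard step and you only gesture at it. For $G$-theory this is Quillen d\'evissage, and that is exactly how the paper argues in \thmref{thm:smooth-K}~(2), where regularity gives $K=G$; but for $KH$ of a possibly singular perfect stack no d\'evissage or purity statement is available in the paper. Invoking \thmref{thm:KH-Inv}~(4) does not help either: $\P(\sE)\subset\P(\sW)$ is a Cartier divisor, so the blow-up is the identity and the square is trivially Cartesian. Your second approach contains a more basic error: the surjection $\phi:\sW\to\A^1_\sX$ is only an $\sE$-torsor, not a vector bundle, unless the extension splits --- i.e.\ unless $\sY$ is already trivial. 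Even granting that $\phi^*$ is an equivalence (which one can see by comparing both sides to $KH(\sX)$), the restriction maps $i_0^*:KH(\sW)\to KH(\sE)$ and $i_1^*:KH(\sW)\to KH(\sY)$ land in different targets, and the fact that $0^*=1^*:KH(\A^1_\sX)\to KH(\sX)$ gives no way to transport the equivalence $i_0^*$ into one for $i_1^*$; the argument is circular.

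The paper's proof is short and takes a different route. By \thmref{thm:Nis-local-aff} there is a Nisnevich cover of $\sX$ by stacks $[U/G]$ with $U$ affine over an affine scheme $S$ and $G$ linearly reductive over $S$; using \propref{prop:Nis-cd} and Nisnevich descent for $KH$ (\corref{cor:Nis-descent-KH}) one reduces to this case. The point is then that over such $[U/G]$ every vector-bundle torsor is trivial: affineness of $U$ and linear reductivity of $G$ force $H^1([U/G],\sE)=0$, so the torsor has a section, hence is a vector bundle, and \thmref{thm:KH-Inv}~(6) applies directly. No purity or support computation is needed.
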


\begin{proof}
By~\thmref{thm:Nis-local-aff}, there exists a Nisnevich covering $[U/G]\to\sX$
where $U$ is affine over an affine scheme $S$ and $G$ is a linearly reductive $S$-group scheme.
By~\propref{prop:Nis-cd} and~\corref{cor:Nis-descent-KH},
we are reduced to showing that $KH([U/G])\to KH([U/G]\times_\sX\sY)$ is a homotopy equivalence.
But since $U$ and $S$ are affine and $G$ is linearly reductive, the vector bundle torsor
$[U/G]\times_\sX\sY\to[U/G]$ has a section and hence is a vector bundle. The result now follows from~\thmref{thm:KH-Inv} (6).
\end{proof}

The following theorem is our cdh-descent result for the homotopy $K$-theory of stacks.

\begin{thm}\label{thm:cdh-descent}
Let $\sX$ be a stack in $\mathbf{Stk}'$ and let
\begin{equation*}
\xymatrix{
\sE \ar@{^{(}->}[r] \ar[d] & \sY \ar[d]^{p} \\  
\sZ \ar@{^{(}->}[r]^{e} & \sX,}
\end{equation*}
be a Cartesian square where $p$ is a proper representable morphism,
 $e$ is a closed immersion, and $p$ induces an isomorphism $\sY\setminus \sE\cong \sX\setminus\sZ$.
Then the induced square of spectra
 \begin{equation}\label{eqn:cdh-descent-0}
 	\xymatrix@C1pc{
 	KH(\sX) \ar[r]^-{p^*} \ar[d]_{e^*} & KH(\sY) \ar[d] \\
 	KH(\sZ) \ar[r] & KH(\sE)}
 \end{equation}
is homotopy Cartesian.
\end{thm}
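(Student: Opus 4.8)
The strategy is the standard reduction of cdh-descent to the Nisnevich-local case combined with the blow-up square already established for $KH$. The key structural inputs are: (a) $KH$ sends regular blow-up squares to homotopy Cartesian squares (\thmref{thm:KH-Inv} (4)); (b) $KH$ satisfies Nisnevich descent on $\mathbf{Stk}'$ (\corref{cor:Nis-descent-KH}, valid since all relevant pairs are perfect); (c) $KH$ is invariant under vector bundle torsors on $\mathbf{Stk}'$ (\lemref{lem:torsor-inv}); (d) Rydh's flatification by blow-ups in the form of \corref{cor:birational-Chow}; and (e) continuity (\thmref{thm:KH-Inv} (5)). The square \eqref{eqn:cdh-descent-0} being homotopy Cartesian is equivalent to the assertion that $KH$, viewed as a presheaf on $\mathbf{Stk}'$, carries the abstract blow-up square to a pullback; combined with Nisnevich descent this gives cdh-descent, which is \thmref{thm:Main-2}.

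First I would reduce to the case where $\sX$ is affine over an affine scheme modulo a linearly reductive almost multiplicative group, i.e. $\sX=[U/G]$ with $U$ affine over affine $S$ and $G$ as in \thmref{thm:Nis-local-aff}. This uses that $KH$ satisfies Nisnevich descent on $\mathbf{Stk}'$ together with \propref{prop:Nis-cd}: given any Nisnevich covering, the abstract blow-up square restricts compatibly to each piece of the resulting filtration, and a homotopy Cartesian square can be detected Nisnevich-locally. Since $\mathbf{Stk}'$ admits Nisnevich coverings by such quotient stacks, and since pulling back the blow-up square along a representable étale map of $\sX$ again yields an abstract blow-up square (with the new ambient stack still in $\mathbf{Stk}'$ by the remark preceding \lemref{lem:torsor-inv}), this reduction is legitimate. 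One also uses continuity to pass to the Noetherian case.

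Next, with $\sX$ of this special form, I would invoke \corref{cor:birational-Chow}: after a further $\sU$-admissible blow-up of $\sX$ (where $\sU=\sX\setminus\sZ$) we may assume $p\colon\sY\to\sX$ factors as $\sY\xrightarrow{g}\tilde\sX\to\sX$ with $g$ projective and $\tilde\sX\to\sX$ a sequence of $\sU$-admissible blow-ups; moreover the composite $\sY\to\sX$ is projective. Using the octahedral-type argument for abstract blow-up squares — i.e. if two of the three relevant squares (for $\tilde\sX\to\sX$, for $\sY\to\tilde\sX$, and for $\sY\to\sX$) are homotopy Cartesian then so is the third — one reduces to two subcases: (i) $p$ is a blow-up along a finitely presented center, and (ii) $p$ is projective. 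For (i), one needs to upgrade the regular blow-up square of \thmref{thm:KH-Inv} (4) to arbitrary (possibly non-regular) centers; this is done by deformation to the normal cone, exactly as in the scheme case (Weibel, Kerz--Strunk), realizing the blow-up square as a pullback of the regular blow-up square on $\sX\times\A^1$ relative to the deformation, and using torsor-invariance (\lemref{lem:torsor-inv}) to kill the $\A^1$-direction. Case (ii), projective $p$, is handled by further blow-up and dévissage reducing to the case where $p$ is itself a blow-up.

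The main obstacle, and where I expect most of the work to go, is establishing the non-regular blow-up square for $KH$ on $\mathbf{Stk}'$ — that is, showing \thmref{thm:KH-Inv} (4) extends from regular centers to arbitrary finitely presented closed substacks. The deformation-to-the-normal-cone argument requires knowing that the relevant deformation stacks lie in $\mathbf{Stk}'$, that the formation of the blow-up commutes with the relevant base changes, and that the $\G_m$-equivariance of the deformation is compatible with torsor-invariance of $KH$. Behind all of this sits the proper base change theorem in stable equivariant motivic homotopy theory from \cite{Hoyois-1}, which is what ultimately makes the projective-pushforward comparison work; its role is to identify the ``boundary'' contributions along $\sE\to\sZ$ after passing to the various blow-ups. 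Once the non-regular blow-up square is in hand, the assembly of the general case from (i), (ii), Nisnevich descent, and continuity is formal.
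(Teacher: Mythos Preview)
Your reduction steps are largely sound and parallel the paper's Steps 3 and 4: Nisnevich-localize via \thmref{thm:Nis-local-aff} and \propref{prop:Nis-cd} to reach the quotient-stack case $[U/G]$, then use \corref{cor:birational-Chow} and the two-out-of-three property to reduce from proper representable $p$ to projective $p$. The paper also uses continuity to handle finite-presentation issues, as you anticipate.

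The genuine gap is in your base case. You propose to upgrade the regular blow-up square of \thmref{thm:KH-Inv}~(4) to arbitrary finitely presented centers by deformation to the normal cone, ``realizing the blow-up square as a pullback of the regular blow-up square on $\sX\times\A^1$.'' This does not work: if $\sZ\hookrightarrow\sX$ is not a regular immersion, then $\sZ\times\{0\}\hookrightarrow\sX\times\A^1$ is not regular either, so the deformation space $\mathrm{Bl}_{\sZ\times\{0\}}(\sX\times\A^1)$ is not a regular blow-up and \thmref{thm:KH-Inv}~(4) does not apply to it. The special fiber is $\mathrm{Bl}_\sZ\sX$ glued to the projectivized normal \emph{cone}, which is not a vector bundle, so torsor-invariance (\lemref{lem:torsor-inv}) gives you nothing. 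There is no known elementary route from the Thomason regular-blow-up formula to full cdh-descent; the arguments you may be recalling from \cite{KST} use \emph{derived} blow-ups, which are quasi-smooth by construction, and that machinery is not set up here.

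What the paper actually does in its Step~1 is bypass this entirely: once $\sX=[U/G]$ with $U$ finitely presented over affine $S$ and $G$ linearly reductive almost multiplicative, it invokes \cite[Theorem~1.3]{Hoyois-2} directly. That theorem already contains the full cdh-descent statement for such quotient stacks, proved by representing $KH$ in the equivariant stable motivic homotopy category $\SH^G(S)$ and applying the proper base change theorem of \cite{Hoyois-1}. \lemref{lem:torsor-inv} enters only to identify the $KH$ of \cite{Hoyois-2} (which is forced to be invariant under vector-bundle torsors) with the $KH$ defined in this paper. So the proper base change theorem is not ``behind'' a deformation argument as you suggest; it \emph{is} the argument for the base case, and the present paper treats it as a black box.
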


\begin{proof}
We proceed in several steps.

\emph{Step 1.} We prove the result under the assumptions that $p$ is projective, that $p$ and $e$ are of finite presentation, and that
 $\sX=[U/G]$ is quasi-affine over $[S/G]$ for some affine scheme $S$ and some linearly reductive
 isotrivial almost multiplicative group scheme $G$ over $S$ 
 (note that such a stack belongs to $\mathbf{Stk}'$ and has the resolution property).
 Since $G$ is finitely presented, we can write $U$ as an inverse limit of quasi-affine $G$-schemes of finite presentation over $S$.
 By \thmref{thm:KH-Inv} (5), $KH$ transforms such limits into homotopy colimits. 
 Since homotopy colimits of spectra commute with homotopy pullbacks, we can assume that $U$ is finitely presented over $S$.
 We are now in the situation of \cite[Theorem 1.3]{Hoyois-2}, and we deduce that \eqref{eqn:cdh-descent-0} is a homotopy Cartesian square
 for the $KH$-theory defined in \cite{Hoyois-2} (more precisely for the presheaf of spectra $KH_{[S/G]}$ defined in \cite[\S4]{Hoyois-2}). 
 But the latter agrees with the $KH$-theory defined in this paper, by~\lemref{lem:torsor-inv}.
 
  \emph{Step 2.} We prove the result under the assumption that $p$ is projective and that $\sX$ is as in Step 1.
   Since every quasi-coherent sheaf on $\sX$
  is the union of its finitely generated quasi-coherent subsheaves \cite{Rydh-approx}, we can write $\sZ$ as a filtered intersection
  of finitely presented closed substacks of $\sX$. By continuity of $KH$ (\thmref{thm:KH-Inv} (5)), we can therefore assume that $e$ is finitely presented.
  In particular, $\sU=\sX\setminus\sZ$ is quasi-compact.
  Since $\sY$ is projective over $\sX$, it is a closed substack of $\P(\sF)$ for some finitely generated quasi-coherent sheaf $\sF$ on $\sX$.
  Since $\sX$ has the resolution property and affine stabilizers, we can write $\sX=[V/\GL_n]$ for some quasi-affine scheme $V$ \cite[Theorem A]{Gross}.
  On such stacks, it is known that every quasi-coherent sheaf is a filtered colimit of finitely presented quasi-coherent sheaves \cite[Theorem A and Proposition 2.10 (iii)]{Rydh-approx2}.
  In particular, $\sF$ is a quotient of a finitely presented sheaf, so we can assume without loss of generality that $\sF$ is finitely presented.
  We can again write $\sY$ as a filtered intersection of finitely presented closed substacks
  $\sY_i\subset\P(\sF)$.
  By \cite[Theorem C (ii)]{Rydh-approx2}, the projection $\sY_i\times_\sX\sU\to\sU$
  is a closed immersion for sufficiently large $i$. But since it has a section, it must be an isomorphism.
 By continuity of $KH$, we can therefore assume that $p$ is finitely presented, and we are thus reduced to Step 1.

 \emph{Step 3.} We prove the result under the assumption that $p$ is projective.
 By~\thmref{thm:Nis-local-aff} and the fact that groups of multiplicative type are
 isotrivial locally in the Nisnevich topology \cite[Remark 2.9]{Hoyois-1}, there exists a Nisnevich covering $\{\sU_i\to\sX\}$ where each $\sU_i$
 is as in Step 1.
 By~\propref{prop:Nis-cd}, there is a sequence of quasi-compact open substacks $\emptyset=\sX_0\subset\dotsb\subset\sX_n=\sX$
  together with Nisnevich squares
 \[
 \xymatrix{
 \sW_j \ar@{^{(}->}[r] \ar[d] & \sV_j \ar[d] \\  
 \sX_{j-1} \ar@{^{(}->}[r] & \sX_j,}
 \]
 where each $\sV_j$ is a quasi-compact open substack of $\coprod_i\sU_i$.
 In particular, each $\sV_j$ and each $\sW_j$ is as in Step 1. 
 Since $KH$ satisfies Nisnevich descent (\corref{cor:Nis-descent-KH}),
 we deduce from Step 2 by a straightforward induction on $j$ that \eqref{eqn:cdh-descent-0} is a homotopy Cartesian square.
  
 \emph{Step 4.} We prove the result in general. 
 As in Step 2, we can assume that the complement of $e$ is quasi-compact.
 By~\corref{cor:birational-Chow}, there exists a projective morphism $\sY'\to\sY$ 
 which is an isomorphism over the complement of $e$ and
 such that the composite $\sY'\to\sY\to\sX$ is projective.
  Consider the squares
 \begin{equation*}
 	\xymatrix@C1pc{
 	KH(\sX) \ar[r]^-{p^*} \ar[d]_{e^*} & KH(\sY) \ar[d] \ar[r] & KH(\sY') \ar[d] \\
 	KH(\sZ) \ar[r] & KH(\sE) \ar[r] & KH(\sE'),}
 \end{equation*}
 where $\sE'=\sE\times_\sY\sY'$.
The right-hand square
and the total square are both homotopy Cartesian by Step 3. Hence, the left-hand square is also
homotopy Cartesian, as desired.
\end{proof}

\section{The vanishing theorems}\label{sec:Neg-vanishing-R}
Our goal now is to use the cdh-descent for homotopy $K$-theory to
prove the vanishing theorems for negative $K$-theory. 
In order to apply cdh-descent, Kerz and Strunk \cite{KS} used the
idea of killing classes in the negative $K$-theory of schemes using
Gruson--Raynaud flatification \cite{GR}. In \S\ref{sec:killing-lemma}, we prove an analog of 
this result for stacks. This is done essentially like in the case of schemes 
where we replace Gruson--Raynaud flatification with
Rydh's flatification theorem for algebraic stacks (\thmref{thm:Rydh}). The vanishing results will be proven in 
\S\ref{sec:Neg-vanishing} and \S\ref{sec:Weibel-DM}.

\subsection{Killing by flatification}\label{sec:killing-lemma}
We shall need the following two preparatory results about quasi-coherent
sheaves on stacks.

\begin{lem}\label{lem:Resolution}
	Let $f: \sY\to\sX$ be a quasi-affine morphism of stacks.
	If $\sX$ satisfies the resolution property, so does $\sY$.
\end{lem}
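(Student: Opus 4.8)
The plan is to reduce the general quasi-affine case to the two basic cases of an affine morphism and an open immersion, and to handle each of those by an explicit construction. Recall that a stack $\sX$ has the resolution property if every quasi-coherent sheaf of finite type on $\sX$ is a quotient of a vector bundle (equivalently, the vector bundles generate $\QC(\sX)$ under colimits and quotients). A quasi-affine morphism $f\colon\sY\to\sX$ factors as a quasi-compact open immersion $j\colon\sY\inj\sX'$ followed by an affine morphism $g\colon\sX'\to\sX$ (for instance $\sX'=\Spec_\sX(f_*\sO_\sY)$, using that $f_*\sO_\sY$ is a quasi-coherent $\sO_\sX$-algebra since $f$ is quasi-compact and quasi-separated). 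So it suffices to treat these two cases separately.

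First I would treat the affine case $g\colon\sX'\to\sX$. Given a finite-type quasi-coherent sheaf $\sF$ on $\sX'$, one wants a vector bundle on $\sX'$ surjecting onto it. The key point is that $g_*$ of a finite-type sheaf need not be finite type, but every quasi-coherent sheaf on $\sX$ is the filtered colimit of its finite-type quasi-coherent subsheaves (by \cite{Rydh-approx}, as used already in \lemref{lem:qproj-composition} and \lemref{lem:G-thry}); so there is a finite-type quasi-coherent $\sG\subset g_*\sF$ on $\sX$ whose adjoint $g^*\sG\to\sF$ is still surjective (surjectivity can be checked after pulling back along the faithfully flat atlas, and the unit $g^*g_*\sF\to\sF$ is surjective because $g$ is affine). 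Since $\sX$ has the resolution property, choose a vector bundle $\sE$ on $\sX$ with $\sE\surj\sG$; then $g^*\sE$ is a vector bundle on $\sX'$ and the composite $g^*\sE\surj g^*\sG\surj\sF$ does the job.

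Next, the open immersion case $j\colon\sY\inj\sX'$ with $\sX'$ having the resolution property. Let $\sF$ be a finite-type quasi-coherent sheaf on $\sY$. Extend $\sF$ to a finite-type quasi-coherent sheaf $\wt\sF$ on $\sX'$ with $j^*\wt\sF\cong\sF$; such an extension exists because $j$ is a quasi-compact immersion and quasi-coherent sheaves on quasi-compact quasi-separated stacks can be extended across quasi-compact open immersions and approximated by finite-type subsheaves (again \cite{Rydh-approx}; this is the standard ``extend then truncate'' argument). Apply the resolution property of $\sX'$ to get a vector bundle $\wt\sE\surj\wt\sF$ on $\sX'$, and restrict: $j^*\wt\sE$ is a vector bundle on $\sY$ surjecting onto $j^*\wt\sF=\sF$.

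The main obstacle, and the only place where genuine care is needed, is the two approximation/extension steps invoking \cite{Rydh-approx} — specifically, that on a quasi-compact quasi-separated stack every quasi-coherent sheaf is a filtered colimit of its finite-type quasi-coherent subsheaves, and that a finite-type quasi-coherent sheaf on a quasi-compact open substack extends to one on the whole stack. Both are by now standard consequences of Rydh's approximation theory and are used elsewhere in this paper, so I would simply cite them; the rest of the argument is a routine diagram chase with faithfully flat atlases to verify surjectivity.
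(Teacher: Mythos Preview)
Your argument is correct. The paper, however, does not supply its own proof of this lemma: it simply cites \cite[Lemma~7.1]{HR-2}. Your reconstruction --- factor the quasi-affine morphism as a quasi-compact open immersion followed by an affine morphism via the relative $\Spec$ of $f_*\sO_\sY$, and in each case use the approximation of quasi-coherent sheaves by finite-type subsheaves \cite{Rydh-approx} to produce a surjection from a pulled-back vector bundle --- is essentially the standard argument, and indeed close to what Hall--Rydh do. The citation buys brevity; your version has the advantage of being self-contained modulo approximation results already invoked elsewhere in the paper. One terminological slip: the map $g^*g_*\sF\to\sF$ you use in the affine case is the \emph{counit} of the adjunction, not the unit.
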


\begin{proof}
	This is \cite[Lemma 7.1]{HR-2}.
\end{proof}

\begin{lem}\label{lem:Tor-dim}
Let $f: \sY \to \sX$ be a smooth morphism of Noetherian stacks and let $\sF$ be a coherent
sheaf on $\sY$ which is flat over $\sX$. Then $\sF$ has finite tor-dimension over
$\sY$.
\end{lem}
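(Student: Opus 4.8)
The plan is to reduce the statement to the classical fact that a coherent sheaf on a Noetherian scheme that is flat over a base, when pulled back through a smooth morphism, has finite Tor-dimension, and to transport this along a smooth atlas. First I would choose a smooth atlas $u\colon U\to\sX$ with $U$ a Noetherian affine scheme; pulling back $f$ gives a smooth morphism $f_U\colon V\to U$ where $V=U\times_\sX\sY$ is a Noetherian scheme, together with the pullback $u_\sY\colon V\to\sY$, which is again a smooth atlas. Since Tor-dimension can be checked smooth-locally (the functor $u_\sY^*$ is exact and conservative on quasi-coherent sheaves, being a faithfully flat pullback, and it preserves the structure of complexes of flats), it suffices to show that $u_\sY^*(\sF)$ has finite Tor-dimension over $V$.

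Next, $u_\sY^*(\sF)$ is a coherent sheaf on $V$, and by flat base change it is flat over $U$. So the statement is reduced to the case of schemes: a coherent sheaf on a Noetherian scheme $V$, smooth over a Noetherian base $U$, and flat over $U$, has finite Tor-dimension. This is standard: locally on $U$ and $V$ one has $V\to U$ smooth of some relative dimension, so factoring $V\to U$ (locally) as an étale morphism to affine space $\A^n_U$ followed by the projection, one reduces to bounding the Tor-dimension of a flat-over-$U$ coherent sheaf on $\A^n_U$ over $\A^n_U$. Here one invokes the local criterion: a finitely generated module $M$ over a Noetherian ring $A=B[x_1,\dots,x_n]$ that is flat over $B$ has projective dimension over $A$ bounded by $n$ (a regular sequence $x_1,\dots,x_n$ on $A$ is also a regular sequence on $M$ by flatness, and killing it yields a module over $B$, whose $A$-Tor is then computed via the Koszul complex), so $\sF$ has Tor-dimension at most $n=\dim(f)$ over $V$.

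The main obstacle, such as it is, is purely bookkeeping: one must make sure that ``finite Tor-dimension'' genuinely descends from the atlas, i.e. that there is a single bound working globally. This follows because $\sX$ is quasi-compact, so $U$ can be taken affine (hence $V$ is covered by finitely many affines), the relative dimension of the smooth morphism $f$ is locally constant hence bounded, and Tor-dimension over $\sY$ is detected on $V$ by a bounded complex of flats pulling back to a bounded complex of flats. I would therefore state the bound as: $\sF$ has Tor-dimension over $\sY$ at most the (locally constant, bounded) relative dimension of $f$. No genuinely new idea beyond the scheme case is needed; the only subtlety is that $\sF$ need not itself be the pullback of anything from $\sX$, but this is irrelevant since the argument is entirely on $V$ over $U$ after base change.
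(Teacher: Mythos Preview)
Your approach matches the paper's exactly: observe that finite Tor-dimension is smooth-local, reduce via an atlas to the case of Noetherian schemes, and invoke the known scheme case (which the paper simply cites as \cite[Lemma~6]{KS} rather than reproving).

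One caveat on your sketch of the scheme case: the parenthetical justification is not correct as stated. The sequence $x_1,\dots,x_n$ need not be $M$-regular merely because $M$ is $B$-flat; for instance $M=A/(x_1)$ is free over $B$ but $x_1$ kills it. The conclusion $\mathrm{pd}_A(M)\le n$ is nonetheless true, but the standard argument proceeds differently: localize at a prime $\mathfrak{p}\subset A$ lying over $\mathfrak{q}\subset B$, and use that for $M$ finite over $A$ and flat over $B$ one has $\mathrm{pd}_{A_\mathfrak{p}}(M_\mathfrak{p})=\mathrm{pd}_{A_\mathfrak{p}/\mathfrak{q}A_\mathfrak{p}}(M_\mathfrak{p}/\mathfrak{q}M_\mathfrak{p})$, which is bounded by the dimension of the regular fiber ring. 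Since the paper only cites the scheme case, this does not affect the correctness of your reduction.
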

\begin{proof}
	Since the question is smooth-local on $\sX$ and $\sY$, we can assume that $\sX$ and $\sY$ are Noetherian schemes.
	In this case, the result is \cite[Lemma~6]{KS}.
\end{proof}

\begin{prop}[\bf{The Killing Lemma}]\label{prop:Flatification}
	Let $\sX$ be a reduced Noetherian stack and let $f: \sY \to \sX$ be a smooth morphism of finite type
	such that $\sY$ is perfect and satisfies the resolution property.
	Let $n > 0$ be an 
integer and let $\xi \in K_{-n}(\sY)$. Then there exists a sequence of blow-ups
 $u: \sX' \to \sX$ with nowhere dense centers such that
for the induced map $u_\sY: \sY':= \sX' \times_{\sX} \sY \to \sY$,
one has $u_\sY^*(\xi) = 0$ in $K_{-n}(\sY')$.
\end{prop}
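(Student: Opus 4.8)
The plan is to adapt the Kerz–Strunk argument (Kerz–Strunk \cite{KS}, cf.\ also \cite[Lemma~7]{KS}) from schemes to stacks, using Rydh's flatification (\thmref{thm:Rydh}) in place of Gruson–Raynaud flatification. The key geometric input is a presentation of a negative $K$-theory class via the projective bundle formula and the $\mathbb G_m$-localization sequence from the Bass fundamental theorem.

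\medskip

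\textbf{Step 1: Reduce the class to a class on a $\mathbb G_m$-bundle.} By the Bass fundamental theorem (\thmref{thm:Bass-fund}) applied iteratively, $K_{-n}(\sY)$ is a natural retract of $K_0(\G_m^n \times \sY)$; more precisely, iterating part~(2), the cup product with $T_1 \cdots T_n \in K_n(\Z[T_1^{\pm 1},\dotsc,T_n^{\pm 1}])$ embeds $K_{-n}(\sY)$ as a direct summand of $K_0(\sY[T_1^{\pm 1},\dotsc,T_n^{\pm 1}])$. So it suffices to kill the image $\bar\xi \in K_0(\sY[T_1^{\pm 1},\dotsc,T_n^{\pm 1}])$ of $\xi$ by a sequence of blow-ups of $\sX$ with nowhere dense centers: pulling back commutes with the cup product, so $u_\sY^*(\bar\xi) = 0$ forces $u_\sY^*(\xi) = 0$. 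Write $\sY_0 = \sY[T_1^{\pm 1},\dotsc,T_n^{\pm 1}]$, which is again smooth of finite type over $\sX$, perfect (by \propref{prop:perfect}), and has the resolution property (by \lemref{lem:Resolution}, as $\G_m^n \times \sY \to \sY$ is affine).

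\medskip

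\textbf{Step 2: Represent the $K_0$-class by a coherent sheaf and flatten it.} Since $\sY_0$ is Noetherian, regular-adjacent in the sense that it has the resolution property, the class $\bar\xi \in K_0(\sY_0)$ can be written as $[\sF_1] - [\sF_2]$ for coherent sheaves $\sF_j$ on $\sY_0$ (using $G$-theory $=$ $K_0$ via the resolution property, $K(\sY_0)\to G(\sY_0)$ being surjective on $\pi_0$ by the standard Quillen-resolution argument, cf.\ \lemref{lem:G-thry}). The $\sF_j$ are generically flat over $\sX$ (being coherent sheaves on a finite-type stack over $\sX$, flat over the generic points since $\sX$ is reduced), so there is a dense open $\sU \subset \sX$ over which both $\sF_j|_{f^{-1}(\sU)}$ are flat and finitely presented. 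By Rydh's flatification theorem (\thmref{thm:Rydh}), applied successively to $\sF_1$ and then $\sF_2$, there is a sequence of $\sU$-admissible blow-ups $u\colon \sX' \to \sX$ — hence with nowhere dense centers — such that the strict transforms $\widetilde{\sF_j}$ on $\sY_0' = \sX' \times_\sX \sY_0$ are flat and finitely presented over $\sX'$.

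\medskip

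\textbf{Step 3: Identify the pullback with the strict transform and use finite tor-dimension.} The remaining point, and the \textbf{main obstacle}, is to show that $u_{\sY_0}^*[\sF_j]$ and $[\widetilde{\sF_j}]$ define the same class in $K_0(\sY_0')$ — i.e.\ that the difference, supported on the exceptional locus, is already accounted for — and that this difference is independent of $j$ so that it cancels in $u_{\sY_0}^*(\bar\xi)$. This is exactly the mechanism of \cite[Lemma~7]{KS}: the strict transform fits in an exact sequence with the total transform and a torsion sheaf supported over the blown-up center; since $\sF_1$ and $\sF_2$ were chosen so that $\bar\xi|_\sU$ is represented by a sheaf \emph{flat over $\sU$} (one can arrange $\sF_1/\sF_2$ to be a quotient, or pass to a two-term flat representative), the correction terms for $\sF_1$ and $\sF_2$ coincide. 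Crucially, to make sense of $u_{\sY_0}^*[\sF_j]$ as a class in $K_0 = G_0$ we need the pullback of the coherent sheaf along the (non-flat) blow-up to have finite tor-dimension over $\sY_0'$; this is where \lemref{lem:Tor-dim} enters — once $\widetilde{\sF_j}$ is flat over $\sX'$ and $\sY_0' \to \sX'$ is smooth, $\widetilde{\sF_j}$ has finite tor-dimension over $\sY_0'$, so it is perfect (using $\sY_0'$ perfect and with the resolution property, via \lemref{lem:P-duality}-type reasoning), hence defines a class in $K_0(\sY_0')$ and the identity $u_{\sY_0}^*(\bar\xi) = [\widetilde{\sF_1}] - [\widetilde{\sF_2}] = 0$ follows because flat pullback of the \emph{defining flat model over $\sU$} is computed by the strict transform. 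The stack-theoretic subtleties — that blow-ups, strict transforms, and $\mathbb G_m$-bundles stay within the class of perfect stacks with the resolution property — are handled by Lemmas~\ref{lem:Resolution}, \ref{lem:qproj-composition}, \propref{prop:perfect}, and the continuity already built into \thmref{thm:K-G-main}(5); I expect no genuinely new difficulty there beyond careful bookkeeping.
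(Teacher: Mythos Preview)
There is a genuine gap in Step~1, and it propagates through Steps~2--3. You embed $K_{-n}(\sY)$ as a retract of $K_0(\sY\times\G_m^n)$ and then set out to make $u_{\sY_0}^*(\bar\xi)=0$ in $K_0(\sY_0')$. But you cannot kill a $K_0$-class by blowing up the base: for instance, the rank of a vector bundle is preserved under pullback, so if $\bar\xi$ has nonzero rank it will never become zero. What you actually need is only that $u_{\sY_0}^*(\bar\xi)$ land in the kernel of the retraction $K_0(\sY'\times\G_m^n)\to K_{-n}(\sY')$, and that kernel is precisely the image of $K_0(\sY'\times\A^n)$ under restriction along $\G_m^n\subset\A^n$. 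Your argument never mentions $\A^n$, so it has no mechanism for producing such a lift. Correspondingly, the conclusion $[\widetilde{\sF_1}]-[\widetilde{\sF_2}]=0$ at the end of Step~3 has no justification: the strict transforms of two unrelated coherent sheaves, both made flat over $\sX'$, have no reason to agree in $K_0$.

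The paper's proof fixes exactly this. It uses the \emph{surjection}
\[
\mathrm{Coker}\bigl(K_0(\sY\times\A^n)\to K_0(\sY\times\G_m^n)\bigr)\twoheadrightarrow K_{-n}(\sY)
\]
rather than the embedding. Lifting $\xi$ to a class in $K_0(\sY\times\G_m^n)$, one uses the resolution property to represent it by a vector bundle $\sE$, extends $\sE$ to a coherent sheaf $\sF$ on $\sY\times\A^n$, and applies Rydh's flatification (\thmref{thm:Rydh}) to $\sF$ over $\sX$. The strict transform $\tilde\sF$ on $\sY'\times\A^n$ is then flat over $\sX'$, hence of finite tor-dimension by \lemref{lem:Tor-dim}, so $[\tilde\sF]\in K_0(\sY'\times\A^n)$; and since the blow-up is an isomorphism over the locus where $\sF$ was already flat (in particular over $\sY\times\G_m^n$, where $\sF=\sE$ is a vector bundle), one has $j^*[\tilde\sF]=[q^*\sE]$. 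Thus the pulled-back class lifts to $\A^n$ and dies in the cokernel. Your Steps~2--3 should be replaced by this extension-to-$\A^n$ argument; the flatification is applied to the extended sheaf on $\sY\times\A^n$, not to sheaves on $\sY\times\G_m^n$.
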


\begin{proof}
We repeat the proof of \cite[Proposition~5]{KS} with minor modifications. 
By the construction of negative $K$-theory of perfect stacks
(see Definition~\ref{defn:Neg-K-def}), there exists a surjection
\begin{equation}\label{eqn:Flatification-0}
{\rm Coker}(K_0(\sY \times \A^n) \to K_0(\sY \times \G^n_m)) \surj
K_{-n}(\sY),
\end{equation}
natural in $\sY$. It will therefore suffice to prove that, for any $\xi 
\in K_0(\sY \times \G^n_m)$, there exists a sequence of blow-ups
$u: \sX' \to \sX$ with nowhere dense centers such that 
$u^*_{\sY \times \G_m^n}(\xi)$ lies in the image of the restriction map
\begin{equation}\label{eqn:Flatification-1}
j^*: K_0(\sY' \times \A^n) \to K_0(\sY' \times \G^n_m).
\end{equation}

Since $\sY$ satisfies the resolution property, it follows from \lemref{lem:Resolution}
that $\sY \times \G^n_m$ also satisfies the resolution property.
In particular, $K_0(\sY \times \G^n_m)$ is generated by classes of vector
bundles on $\sY \times \G^n_m$. Since any finite collection of sequences of blow-ups of $\sX$
can be refined by a single such sequence, we can assume that $\xi$ is
represented by a vector bundle $\sE$ on $\sY \times \G^n_m$. 
We can now extend $\sE$ to a coherent sheaf $\sF$ on $\sY \times \A^n$
(by \cite[Theorem A]{Gross} and \cite[Lemma~1.4]{Thom1}).

Choose a commutative square
\begin{equation*}
\xymatrix{
Y \ar[r]^{g} \ar[d]_p & X \ar[d]^q \\
\sY \ar[r]^f & \sX,}
\end{equation*}
where $X$ and $Y$ are algebraic spaces, $p$ and $q$ are smooth surjective maps, and $g$ is smooth of finite type.
By generic flatness (see \cite[Tag 06QR]{SP}), we can find a dense open
subspace $U \subset X$ such that $(q\times\mathrm{id}_{\A^n})^*(\sF)$ is flat over $U$ under the composite map 
$Y \times \A^n \to Y \to X$. Then $U$ induces a dense open substack $\sU\subset\sX$ such that
$\sF$ is flat over $\sU$.
We now apply \thmref{thm:Rydh} to find a sequence of blow-ups $u: \sX' \to \sX$
whose centers are disjoint from $\sU$ such that the strict transform 
$\tilde{\sF}$ of $\sF$ on $\sY' \times \A^n$ is flat over 
$\sX'$. 

We consider the commutative diagram of Cartesian squares
\begin{equation}\label{eqn:Flatification-2}
\xymatrix@C1pc{
\sY' \times \G^n_m \ar[r]^-{j} \ar[d]_{q} & \sY' \times \A^n \ar[r]^-{e'} 
\ar[d]^{w} &
\sY' \ar[r] \ar[d]^{v} & \sX' \ar[d]^{u} \\
\sY \times \G^n_m \ar[r] & \sY \times \A^n \ar[r]^-{e}  &
\sY \ar[r]  & \sX}
\end{equation}
in which the vertical arrows are blow-ups and the horizontal arrows are 
smooth. 
We next recall that the strict transform $\tilde{\sF}$ is defined by the
cokernel of the map 
$\sH^0_{E \times \A^n}(w^*({\sF})) \inj w^*({\sF})$,
where $E \inj \sY'$ is the exceptional locus of the blow-up
and $\sH^0_{(-)}$ is the sheaf of sections with support.
Since $\sF$ restricts to the vector bundle $\sE$ over $\sY \times \G^n_m$,
which in turn is smooth over $\sX$, it follows that 
$j^*(\tilde{\sF}) = q^*(\sE)$, by \cite[Tag 080F]{SP}.

Lemma~\ref{lem:Tor-dim} says that $\tilde{\sF}$ has finite tor-dimension over $\sY' \times \A^n$.
In particular, it defines a class
$[\tilde{\sF}] \in K_0(\sY' \times \A^n)$. Moreover,
we have $[q^*(\sE)] = [j^*(\tilde{\sF})] = j^*([\tilde{\sF}])$.
This finishes the proof.
\end{proof}

\subsection{Vanishing of negative homotopy $K$-theory}\label{sec:Neg-vanishing}
We now use the techniques of cdh-descent and killing by flatification to 
prove our main results on the vanishing of negative $K$-theory of stacks.

\def\Krdim{\mathrm{Kr\,dim}}
\def\covdim{\mathrm{cov\,dim}}
\def\bldim{\mathrm{bl\,dim}}

\begin{defn}\label{defn:cov-dim}
Let $\sX$ be a Noetherian stack.
\begin{enumerate}
	\item The {\sl Krull dimension} $\Krdim(\sX)\in \mathbb N\cup\{\pm\infty\}$ is the Krull dimension
	of the underlying topological space $\lvert\sX\rvert$ (see \cite[Chapter~5]{LM} for
	the definition of $\lvert\sX\rvert$).
	\item The {\sl blow-up dimension} $\bldim(\sX)\in \mathbb N\cup\{\pm\infty\}$ is the supremum of the integers $n\geq 0$ such that
	there exists a sequence $\sX_n\to\sX_{n-1}\to\dotsb\to \sX_0=\sX$ of non-empty stacks
	where each $\sX_i$ is a nowhere dense closed substack of an iterated blow-up of $\sX_{i-1}$.
	\item The {\sl covering dimension} $\covdim(\sX)\in \mathbb N\cup\{\pm\infty\}$ is the least dimension of a scheme $X$
	admitting a faithfully flat finitely presented morphism $X\to\sX$.
	\end{enumerate}
\end{defn}

\begin{lem}\label{lem:dim}
	Let $\sX$ be a Noetherian stack. Then
	\[
	\Krdim(\sX) \leq \bldim(\sX) \leq \covdim(\sX).
	\]
	If $\sX$ is a quasi-DM stack, all three are equal to $\dim(\sX)$.
\end{lem}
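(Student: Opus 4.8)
The plan is to prove the two inequalities $\Krdim(\sX)\le\bldim(\sX)\le\covdim(\sX)$ separately, and then establish the equalities in the quasi-DM case.

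For the first inequality, I would show that a blow-up with nowhere dense center cannot decrease the Krull dimension, and that passing to a nowhere dense closed substack strictly decreases it. More precisely, given a chain $\sX_n\to\dotsb\to\sX_0=\sX$ witnessing $\bldim(\sX)\ge n$, I would argue by induction that $\Krdim(\sX_i)\ge\Krdim(\sX_{i-1})$ fails to be the right bookkeeping — instead, one should read the definition the other way: each $\sX_i$ is nowhere dense in an \emph{iterated blow-up} $\tilde{\sX}_{i-1}$ of $\sX_{i-1}$. A blow-up $p\colon\tilde\sX\to\sY$ with finitely presented center is proper and birational, hence surjective and an isomorphism over a dense open, so $\Krdim(\tilde\sX)\ge\Krdim(\sY)$ (the generic points of $\sY$ lift). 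Then $\sX_i\subsetneq\tilde\sX_{i-1}$ is nowhere dense and closed, so every irreducible component of $\sX_i$ is properly contained in an irreducible component of $\tilde\sX_{i-1}$, giving $\Krdim(\sX_i)\le\Krdim(\tilde\sX_{i-1})-1$. Hmm — this shows $\bldim$ can be \emph{smaller}, not larger, so in fact what we want is the reverse: a chain realizing $\bldim$ exists of length $\Krdim(\sX)$ by taking $\tilde\sX_{i-1}=\sX_{i-1}$ (no blow-up) and $\sX_i$ a maximal-dimensional proper closed subset obtained by removing one generic point. That produces a chain of length exactly $\Krdim(\sX)$, so $\bldim(\sX)\ge\Krdim(\sX)$.

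For the second inequality $\bldim(\sX)\le\covdim(\sX)$, fix a faithfully flat finitely presented morphism $\pi\colon X\to\sX$ with $X$ a scheme of dimension $\covdim(\sX)=:d$. The key point is that this bound is stable under the two operations appearing in the definition of $\bldim$. First, base change along $\pi$: a blow-up $\mathrm{Bl}_\sZ(\sX)$ pulls back to the blow-up $\mathrm{Bl}_{\pi^{-1}\sZ}(X)$ (blow-ups commute with flat base change), and a blow-up of a $d$-dimensional scheme with nowhere dense center again has dimension $d$ — this is the scheme-theoretic input, e.g.\ \cite[Tag 0B3P]{SP} or a direct computation on each component. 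Second, a nowhere dense closed substack $\sX'\subset\sX$ pulls back to a nowhere dense closed subscheme of $X$, whose dimension drops by at least one. So, running the chain $\sX_n\to\dotsb\to\sX_0$ witnessing $\bldim(\sX)\ge n$ and base-changing the whole diagram along $\pi$ (and a compatibly chosen smooth cover at each stage), one gets that each $\sX_i$ admits a faithfully flat finitely presented cover by a scheme of dimension $\le d-i$ whenever at least $i$ of the steps were ``nowhere dense substack'' steps; since $\sX_n$ is non-empty its covering dimension is $\ge 0$, forcing $n\le d$. The main subtlety here is arranging the covers to be compatible through blow-ups; this is handled by the fact that a blow-up is representable, so $X\times_\sX\mathrm{Bl}_\sZ(\sX)=\mathrm{Bl}_{\pi^{-1}\sZ}(X)$ is already a scheme, and one only needs a fresh cover after a ``substack'' step.

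For the final claim, suppose $\sX$ is quasi-DM. By \cite[Tag 06MC]{SP} there is a quasi-finite faithfully flat finitely presented $X\to\sX$ with $X$ an affine scheme; a quasi-finite morphism does not change dimension of the underlying space (fibers are $0$-dimensional and the map is open-ish after the usual local structure theory, so $\dim X=\Krdim(\sX)$). This gives $\covdim(\sX)\le\Krdim(\sX)$, and combined with the two inequalities already proved, $\Krdim(\sX)=\bldim(\sX)=\covdim(\sX)$. Finally $\Krdim(\sX)=\dim(\sX)$ for quasi-DM stacks by the convention that $\dim$ is the Krull dimension of $|\sX|$ together with the above observation that it is computed by the quasi-finite cover. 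I expect the genuinely delicate step to be the ``blow-up does not decrease (Krull) dimension and does not increase covering dimension'' pair of facts — everything else is bookkeeping about chains of closed substacks — but both reduce cleanly to the corresponding statements for Noetherian schemes via a faithfully flat cover, since blow-ups are representable and commute with flat base change.
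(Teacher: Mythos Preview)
Your arguments for the two inequalities are essentially those of the paper, though the paper organizes the second one more cleanly: rather than threading a single cover through the chain, it proves once that $\covdim(\sY)\le\covdim(\sX)$ for a blow-up $\sY\to\sX$ and $\covdim(\sZ)\le\covdim(\sX)-1$ for a nowhere dense closed substack $\sZ\subset\sX$ (using that the fppf atlas map is open on underlying spaces, so nowhere dense pulls back to nowhere dense), and then concludes by induction. Your remark about needing ``a fresh cover after a substack step'' is unnecessary once you phrase it this way.

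There is, however, a genuine gap in your quasi-DM argument. You write that $\Krdim(\sX)=\dim(\sX)$ holds ``by the convention that $\dim$ is the Krull dimension of $|\sX|$'', but this is \emph{not} the convention: the stack dimension $\dim(\sX)$ is defined via a smooth atlas as $\dim(U)-\mathrm{reldim}(U/\sX)$ \cite[Tag 0AFL]{SP}, and the paper's own example $\dim([\A^n_k/\GL_n])=n-n^2$ versus $\Krdim=1$ shows the two disagree in general. For quasi-DM stacks the equality $\dim(\sX)=\Krdim(\sX)$ is part of what the lemma asserts and must be proven. The paper does this by establishing two separate facts for any faithfully flat representable quasi-finite morphism $f\colon\sY\to\sX$ of Noetherian stacks: (i) $\dim(\sY)=\dim(\sX)$ in the stack sense, reducing to algebraic spaces and invoking \cite[Tags 04NV, 0AFH]{SP}; and (ii) $\Krdim(\sY)\le\Krdim(\sX)$, by checking that a strict chain of irreducible closed subsets of $|\sY|$ pushes forward to a strict chain in $|\sX|$, using that the fibers of $|f|$ are discrete. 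Taking $\sY$ to be a scheme then yields $\covdim(\sX)\le\dim(\sX)$ from (i) and $\dim(\sX)=\Krdim(\sY)\le\Krdim(\sX)$ from (ii), closing the circle. Your sketch collapses these two steps into the single assertion ``$\dim X=\Krdim(\sX)$'' with a hand-wave about quasi-finite maps; even if that equality holds, your justification does not separate the stack-dimension comparison (where the nontrivial content lies) from the Krull-dimension comparison.
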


\begin{proof}
	The inequality $\Krdim(\sX) \leq \bldim(\sX)$ follows directly from the definitions, since $\Krdim(\sX)$ is the supremum of a subset of the set of integers described in \defref{defn:cov-dim} (2). 
	For the inequality $\bldim(\sX) \leq \covdim(\sX)$, it suffices to prove the following:
	\begin{enumerate}
		\item[(i)] If $\sY\to\sX$ is a blow-up, then $\covdim(\sY)\leq \covdim(\sX)$.
		\item[(ii)] If $\sZ\subset\sX$ is a nowhere dense closed substack, then $\covdim(\sZ)\leq \covdim(\sX)-1$.
	\end{enumerate}
	Let $f: X\to\sX$ be an fppf cover where $X$ is a scheme.
	Then $X$ is Noetherian and $X\times_{\sX}\sY\to X$ is a blow-up of $X$. It follows that $\dim(X\times_{\sX}\sY)\leq \dim(X)$, whence (i).
	By \cite[Tag 04XL]{SP}, the induced map of topological spaces $\lvert f\rvert: \lvert X\rvert \to \lvert\sX\rvert$
	is continuous and open. Using \cite[Tag 03HR]{SP}, we deduce that $X\times_{\sX}\sZ$ is a nowhere dense closed subscheme of $X$.
	It follows that $\dim(X\times_{\sX}\sZ)\leq \dim(X)-1$, whence (ii).
	
	For the last statement, we will prove more generally that the following hold for
	every faithfully flat representable quasi-finite morphism of Noetherian stacks $f:\sY\to\sX$:
	\begin{enumerate}
		\item[(i)] $\dim(\sY)=\dim(\sX)$.
		\item[(ii)] $\Krdim(\sY)\leq \Krdim(\sX)$.
	\end{enumerate}
	If $\sX$ is quasi-DM, we can take $\sY$ to be a scheme and we deduce that $\covdim(\sX)\leq \dim(\sX)$
	and that $\dim(\sX)\leq\Krdim(\sX)$, as desired.
	To prove (i), by definition of the dimension of a stack \cite[Tag 0AFL]{SP}, we are
	immediately reduced to the case where $\sX$ is an algebraic space.
	In this case, the claim follows from \cite[Tags 04NV and 0AFH]{SP}.
	If $Z_0\subset \dotsb \subset Z_n$ is a strictly increasing sequence of irreducible closed
	subsets of $\lvert\sY\rvert$, then $\overline{f(Z_0)}\subset\dotsb\subset \overline{f(Z_n)}$ is a sequence of irreducible
	closed subsets of $\lvert\sX\rvert$. To check that it is strictly increasing, we may again assume that
	$\sX$ is an algebraic space. If the sequence were not strictly increasing, we would have
	a nontrivial specialization in a fiber of $\lvert f\rvert$, which is a discrete space \cite[Tag 06RW]{SP}. 
	This proves (ii).
\end{proof}

\begin{ex}
	Let $k$ be a field, let $n\geq 1$, and let $\sX$ be the stack quotient of 
	$\A^n_k$ by the standard action of the general linear group $GL_n$.
	Then $\Krdim(\sX)=\bldim(\sX)=1$, $\covdim(\sX)=n$, and $\dim(\sX)=n-n^2$.
	We do not know an example where $\Krdim(\sX)\neq\bldim(\sX)$.
\end{ex}

See \S\ref{sec:cdh-descent} for the definition of the category $\mathbf{Stk}'$ appearing in the next theorem.

\begin{thm}\label{thm:Vanishing-KH}
Let $\sX$ be a stack in $\mathbf{Stk}'$ satisfying the resolution property. 
If $\sX$ is Noetherian of blow-up dimension $d$, then $KH_i(\sX) = 0$ for $i < -d$.
\end{thm}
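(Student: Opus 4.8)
The plan is to induct on the blow-up dimension $d$ and mimic the Kerz–Strunk argument, using cdh-descent (\thmref{thm:cdh-descent}) and the Killing Lemma (\propref{prop:Flatification}) in place of their scheme-theoretic inputs. First I would reduce to the case where $\sX$ is reduced: the nilpotent-ideal filtration on $\sO_\sX$ gives a finite sequence of square-zero extensions, and since $KH$ is invariant under such extensions (a formal consequence of \thmref{thm:KH-Inv}, or of the fact that $KH$ is nil-invariant on affines together with Nisnevich descent via \corref{cor:Nis-descent-KH}), we may assume $\sX=\sX_\red$. The base case is $d\le -1$, where $\sX=\emptyset$ and there is nothing to prove; more usefully, when $d=0$ the stack $\sX$ is zero-dimensional, hence (being Noetherian with the resolution property and in $\mathbf{Stk}'$) a finite disjoint union of quotients of Artinian rings by linearly reductive groups, and one checks directly that these are "$KH$-regular" so that $KH_i=0$ for $i<0$; alternatively this case falls out of the inductive step.

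For the inductive step, assume the result for all stacks in $\mathbf{Stk}'$ with the resolution property of blow-up dimension $<d$, and let $\sX$ have blow-up dimension $d$ with $i<-d$. Fix a class $\xi\in KH_i(\sX)$. By continuity (\thmref{thm:KH-Inv} (5)) we may pull $\xi$ back to the connective $K$-theory level: more precisely, $KH_i(\sX)$ is a colimit over $j\ge 0$ of $K^B$-groups of $\sX\times\Delta^j$ built out of $K_0$ of $\G_m$-twists, so it suffices to kill classes in $K_{-n}(\sY)$ for $\sY=\sX\times\Delta^j$, which is again in $\mathbf{Stk}'$ with the resolution property, smooth of finite type over $\sX$, and perfect. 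Now apply the Killing Lemma: there is a sequence of blow-ups $u\colon\sX'\to\sX$ with nowhere dense centers so that $u^*\xi=0$ in $KH_i(\sX')$. Let $\sZ\subset\sX$ be the (reduced) center of the first blow-up — a nowhere dense, hence lower-blow-up-dimensional, closed substack — with exceptional preimage $\sE\subset\sX'$. The abstract blow-up square relating $\sX,\sX',\sZ,\sE$ is of the type covered by \thmref{thm:cdh-descent}, since $u$ is proper, representable, and an isomorphism away from $\sZ$ (after possibly factoring the iterated blow-up as a single projective representable modification and using \corref{cor:birational-Chow}); it yields a Mayer–Vietoris long exact sequence
\[
KH_{i+1}(\sE)\to KH_i(\sX)\to KH_i(\sX')\oplus KH_i(\sZ)\to KH_i(\sE).
\]
Since $\sZ$ and $\sE$ have blow-up dimension $<d$ (for $\sZ$ this is immediate; for $\sE$ it is because $\sE$ is a closed substack of an iterated blow-up of $\sZ$, so $\bldim(\sE)\le\bldim(\sZ)<d$), and since $\sE$, $\sZ$ are representable over $\sX'$, $\sX$ respectively and hence still lie in $\mathbf{Stk}'$ and (by \lemref{lem:Resolution}, $\sE$ and $\sZ$ being quasi-affine over resolution-property stacks — or by refining the argument as needed) inherit the resolution property, the inductive hypothesis gives $KH_i(\sZ)=KH_i(\sE)=0$ and $KH_{i+1}(\sE)=0$ for $i<-d$, i.e.\ $i+1<-(d-1)$. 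Thus $KH_i(\sX)\to KH_i(\sX')$ is injective, so $\xi=0$.

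The main obstacle I anticipate is the bookkeeping around the resolution property and the category $\mathbf{Stk}'$ being preserved through all the steps — in particular verifying that the exceptional locus $\sE$, the strict transforms appearing in the iterated blow-up, and the intermediate stacks $\sX'$ all still satisfy the hypotheses of \thmref{thm:Vanishing-KH} so that the induction closes. Blow-ups and nowhere dense closed substacks of a stack in $\mathbf{Stk}'$ with affine-or-quasi-affine structure morphisms remain in $\mathbf{Stk}'$ (by the remark after the definition of $\mathbf{Stk}'$, stabilizers only shrink) and retain the resolution property by \lemref{lem:Resolution}; the delicate point is that the Killing Lemma produces an \emph{iterated} blow-up, so one must either iterate the abstract-blow-up descent step (tracking that blow-up dimension strictly drops each time one passes to a center) or first apply \corref{cor:birational-Chow} to replace the iterated blow-up by a single projective representable modification whose non-isomorphism locus is a nowhere dense closed substack. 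A secondary point requiring care is the nil-invariance reduction at the start: one should confirm that $KH$ of a stack in $\mathbf{Stk}'$ agrees with $KH$ of its reduction, which follows from Nisnevich-local reduction to quotient stacks $[U/G]$ (via \thmref{thm:Nis-local-aff} and \corref{cor:Nis-descent-KH}) together with the known nil-invariance of $KH$ for affine $G$-schemes with $G$ linearly reductive.
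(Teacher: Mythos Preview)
Your overall strategy is the paper's: induct on $d$, reduce to $\sX$ reduced (the paper does this in one line by applying \thmref{thm:cdh-descent} with $\sY=\emptyset$), apply the Killing Lemma to produce a blow-up $u$ killing a given class, and use cdh-descent together with the inductive hypothesis on $\sZ$ and $u^{-1}(\sZ)$ to show $u^*$ is injective on $KH_i$.

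The one real gap is your passage from a class $\xi\in KH_i(\sX)$ to a class in some $K_{i-n}(\sX\times\Delta^n)$ to which the Killing Lemma applies. You assert that $KH_i(\sX)$ is ``a colimit over $j\ge0$ of $K^B$-groups of $\sX\times\Delta^j$'', but $KH(\sX)=\hocolim_{\Delta^{\rm op}}K^B(\sX\times\Delta^\bullet)$ is a geometric realization, not a filtered colimit of these groups, and a class in $KH_i$ does not directly lift to a single $K^B_i(\sX\times\Delta^j)$. The paper fills this by an inner induction on the simplicial level: write $KH(\sX)=\hocolim_n F_n(\sX)$ with $F_n$ the $n$th partial realization, and show that $\pi_iF_n(\sX)\to KH_i(\sX)$ is zero for all $n$ and all $i<-d$. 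By the inner inductive hypothesis this map factors through the cokernel $C_{i,n}$ of $\pi_iF_{n-1}\to\pi_iF_n$, and $C_{i,n}$ embeds in $K_{i-n}(\sX\times\Delta^n)$ since the cofiber of $F_{n-1}\to F_n$ is a retract of $\Sigma^nK^B(\sX\times\Delta^n)$. Now the Killing Lemma applies to this lift, and your cdh-descent argument finishes as written. Without this double induction the lifting step does not close.

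A minor correction: your reasoning that $\bldim(\sE)<d$ because $\sE$ sits in an iterated blow-up of $\sZ$ is off---$\sE=u^{-1}(\sZ)$ is rather a nowhere dense closed substack of an iterated blow-up of $\sX$, which directly gives $\bldim(\sE)\le d-1$ from \defref{defn:cov-dim} (2). Your worries about preservation of $\mathbf{Stk}'$ and the resolution property are legitimate but resolvable: all maps in play are schematic with affine diagonal (so $\mathbf{Stk}'$ is preserved), and closed substacks and projective morphisms over a stack with the resolution property again have it.
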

\begin{proof}
We shall prove the theorem by induction on $d$. Since $KH$ is nil-invariant
(take $\sY=\emptyset$ in~\thmref{thm:cdh-descent}), we can assume that $\sX$ is reduced.
We can write $KH(\sX) = \hocolim_n F_n(\sX)$, where
$$F_n(\sX) = \hocolim_{\Delta^{\rm op}_{\le n}} K^B(\sX \times \Delta^{\bullet}).$$
It suffices to show inductively on $n$ that the canonical map
$\pi_i F_n(\sX) \to KH_i(\sX)$ is zero for all $i < -d$.
This is trivial if $n < 0$, so assume $n\geq 0$.

Let $C_{i,n}(\sX)$ denote the cokernel of $\pi_i F_{n-1}(\sX) \to \pi_i F_n(\sX)$.
Since the cofiber of the map $F_{n-1}(\sX) \to F_n(\sX)$ is canonically 
a direct summand of $\Sigma^n K^B(\sX \times \Delta^n)$ (see for instance \cite[Remark 1.2.4.7]{HA}), 
we may identify $C_{i,n}(\sX)$ with a subgroup of $K_{i-n}(\sX \times \Delta^n)$.
By the induction hypothesis, the map $\pi_i F_n(\sX) \to
KH_i(\sX)$ factors through $C_{i,n}(\sX)$:
\begin{equation}\label{eqn:Vanishing-KH-DM-0}
\xymatrix@C.8pc{
\pi_i F_{n-1}(\sX) \ar[r] \ar[dr]_{0} & \pi_i F_n(\sX) \ar[d] \ar@{->>}[r] &
C_{i,n}(\sX) \ar@{-->}[dl]^{\phi_{i,n}} \ar@{^{(}->}[r] & 
K_{i-n}(\sX \times \Delta^n) \\
& KH_i(\sX). & &}
\end{equation}

Hence, it suffices to show that $\phi_{i,n}: C_{i,n}(\sX) \to KH_i(\sX)$ is zero.
Let $\xi \in C_{i,n}(\sX) \subset K_{i-n}(\sX \times \Delta^n)$.
By~\lemref{lem:Resolution}, $\sX\times\Delta^n$ satisfies the resolution property.
By \propref{prop:Flatification}, there exists a sequence of blow-ups
$u: \sX' \to \sX$ with nowhere dense centers such that $u^*(\xi) = 0$ in
$C_{i,n}(\sX') \subset K_{i-n}(\sX' \times \Delta^n)$ (note that $i-n < 0$).
Let $\sZ \subset \sX$ be a nowhere dense closed substack of $\sX$ 
such that $u$ is an isomorphism over the complement of $\sZ$.
By \thmref{thm:cdh-descent}, 
we have a long exact sequence
\begin{equation*}
\cdots \to KH_{i+1}(u^{-1}(\sZ)) \to KH_i(\sX) \to KH_i(\sX') \oplus KH_i(\sZ)
\to \cdots .
\end{equation*}

Note that both $\sZ$ and $u^{-1}(\sZ)$ have blow-up dimension
strictly less than $d$.
By the induction hypothesis, $KH_{i+1}(u^{-1}(\sZ))$ and $KH_i(\sZ)$ are both
zero, so $u^*: KH_i(\sX) \to KH_i(\sX')$ is injective.
Since $\phi_{i,n}$ is natural in $\sX$, we have $u^* \phi_{i,n}(\xi) =
\phi_{i,n} u^*(\xi) = 0$, and we conclude that $\phi_{i,n}(\xi) = 0$.
This finishes the proof.
\end{proof} 

Our next goal is to remove the resolution property assumption from \thmref{thm:Vanishing-KH}.
We will be able to do so under the additional assumption that $\sX$ has finite inertia.
If $X$ is a Noetherian algebraic space, we will denote by $\mathbf{\acute Et}_X$ the category of algebraic spaces over $X$
that are étale, separated, and of finite type.
The following lemma is a Nisnevich variant of \cite[Proposition~3]{KS}.

\begin{lem}\label{lem:Nis-vanishing}
	Let $X$ be a Noetherian algebraic space, let $\sF$ be a Nisnevich sheaf 
	of abelian groups on $\mathbf{\acute Et}_X$, and let
	$r$ be an integer.
	Suppose that, for every point $y\in Y\in \mathbf{\acute Et}_X$ with $\dim\overline{\{y\}}>r$, 
	$\sF(\sO_{Y,y}^h)=0$. Then $H^i_\mathrm{Nis}(X,\sF)=0$ for all $i>r$.
\end{lem}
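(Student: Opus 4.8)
The strategy is the standard one for vanishing of Nisnevich cohomology above a given dimension: induct on $\dim X$ and peel off strata using the localization sequence for Nisnevich cohomology with support. First I would reduce to the case where $X$ is reduced and even integral, since Nisnevich cohomology only depends on the underlying reduced space and (by a Mayer--Vietoris/excision argument, or by noticing that $X$ is a finite union of its irreducible components glued along closed subspaces of strictly smaller dimension) the general case follows from the irreducible one by a descending induction on the number of components. So assume $X$ is integral of dimension $d$; if $d\le r$ there is nothing to prove, so assume $d>r$.

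Next, let $\eta$ be the generic point of $X$. The key input is that the Nisnevich cohomological dimension of $X$ is at most $\dim X$ (this is a theorem of Kato--Saito / Nisnevich; alternatively it follows from the fact that the Nisnevich site has enough points given by henselizations, and a local-to-global/Leray argument bounding the combinatorial dimension of the site by $\dim X$). Granting this, $H^i_{\mathrm{Nis}}(X,\sF)=0$ automatically for $i>d$, so only the finitely many values $r<i\le d$ need attention. The plan is to choose a dense open $U\subset X$ over which $\sF$ has a particularly simple form. Since $\eta\in X$ has $\dim\overline{\{\eta\}}=d>r$, the hypothesis gives $\sF(\sO^h_{X,\eta})=\sF(\mathrm{Spec}\,k(\eta))=0$; spreading this out (using that $\sF$ is a finitely-presentable-stalks sheaf on a Noetherian space, or just that the stalk at $\eta$ is the filtered colimit of $\sF(V)$ over $V\in\mathbf{Et}_X$ meeting $\eta$), one finds a dense open $U\subseteq X$ with $\sF|_U=0$ — or more carefully, with $\sF$ vanishing on a cofinal system of étale neighborhoods of $\eta$, hence $\sF|_U=0$ after shrinking. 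Let $Z=X\setminus U$ with its reduced structure, a closed subspace with $\dim Z\le d-1$.

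Now run the localization sequence in Nisnevich cohomology
\begin{equation*}
\cdots\to H^i_Z(X,\sF)\to H^i_{\mathrm{Nis}}(X,\sF)\to H^i_{\mathrm{Nis}}(U,\sF|_U)\to H^{i+1}_Z(X,\sF)\to\cdots.
\end{equation*}
Since $\sF|_U=0$, the middle term is $H^i_Z(X,\sF)\xrightarrow{\sim}H^i_{\mathrm{Nis}}(X,\sF)$. For the cohomology with support, I would use the identification of $H^*_Z(X,\sF)$ with $H^*_{\mathrm{Nis}}(Z,i^!\sF)$ (or proceed directly via the hypercohomology spectral sequence for the local cohomology sheaves $\mathcal H^q_Z(\sF)$ on $X$, which are supported on $Z$ hence computed on $\mathbf{Et}_Z$). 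Either way, the relevant sheaf on $Z$ again satisfies the hypothesis of the lemma with the \emph{same} $r$: for $z\in W\in\mathbf{Et}_Z$ with $\dim\overline{\{z\}}>r$, the henselian local ring computation reduces to a corresponding point on an étale $X$-space lying over $z$, and $\dim\overline{\{\cdot\}}$ is unchanged, so the stalk vanishes by hypothesis; one also uses that $\mathcal H^q_Z(\sF)$ has vanishing stalks at points where $\sF$ itself is locally zero, which is automatic on the étale site once $\sF|_U=0$ near such points. Since $\dim Z\le d-1<d$, the inductive hypothesis applies to $Z$ and gives $H^i_{\mathrm{Nis}}(Z,-)=0$ for $i>r$ for the sheaves in question, and feeding this into the spectral sequence (whose $E_2$-page lives in the range governed by $\dim Z$) yields $H^i_Z(X,\sF)=0$ for $i>r$, hence $H^i_{\mathrm{Nis}}(X,\sF)=0$ for $i>r$.

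The main obstacle I anticipate is not the dévissage itself but the bookkeeping around cohomology with support: making precise that $H^*_Z(X,\sF)$ is computed by a sheaf on $\mathbf{Et}_Z$ satisfying the inductive hypothesis, given that $\sF$ is only assumed to be a Nisnevich sheaf on $\mathbf{Et}_X$ (not an abelian sheaf with extra structure). The cleanest route is probably to avoid $i^!$ altogether and argue directly: write $H^*_{\mathrm{Nis}}(X,\sF)$ via the Nisnevich-local-to-global spectral sequence with $E_2^{p,q}=H^p_{\mathrm{Nis}}(X,\underline{H}^q)$ where $\underline{H}^q$ is the Nisnevich sheafification of $W\mapsto H^q_{\mathrm{Nis}}(W,\sF)$ — but since $\sF$ is already a sheaf this collapses — and instead induct using the excisive triangle coming from the open--closed decomposition $U\hookrightarrow X\hookleftarrow Z$, i.e. the fact that $R\Gamma_Z$ fits in a distinguished triangle, combined with the vanishing of the Nisnevich cohomological dimension bound. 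This is exactly the structure of the proof of \cite[Proposition~3]{KS}, and I expect it transposes to algebraic spaces without essential difficulty once the basic facts (localization triangle, cohomological dimension bound, henselian local rings as points of the Nisnevich topos) are cited for algebraic spaces from \cite{SP} or the literature on the Nisnevich topology.
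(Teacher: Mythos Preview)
Your proof has a genuine gap at the ``spreading out'' step. You claim that since $\sF(\sO^h_{X,\eta})=0$ at the generic point $\eta$, one can find a dense open $U\subseteq X$ with $\sF|_U=0$. This is false: vanishing of a single Nisnevich stalk says only that each individual section becomes zero after passing to some \'etale neighborhood of $\eta$, not that the sheaf itself vanishes on any fixed neighborhood. A concrete counterexample on $X=\A^1_k$: let $\sF=\bigoplus_{p\text{ closed}}(i_p)_*\Z$. Then $\sF_\eta=0$ (indeed $\sF_y=0$ for every $y$ with $\dim\overline{\{y\}}>0$, so the hypothesis of the lemma holds with $r=0$), yet $\sF(U)=\bigoplus_{p\in U}\Z\neq 0$ for every nonempty open $U$, so $\sF|_U$ is never zero. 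Your parenthetical ``more carefully, $\sF$ vanishing on a cofinal system of \'etale neighborhoods of $\eta$'' is the same error restated: $\colim_V\sF(V)=0$ does not mean $\sF(V)=0$ cofinally. Since this step is what produces the closed complement $Z$ on which you intend to induct, the entire d\'evissage collapses; the difficulty you anticipated with bookkeeping around $i^!$ never actually arises, because you cannot get to that point.

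The paper's proof avoids this by working section by section rather than with $\sF$ globally. One first shows that any local section $s$ of $\sF$ has support of dimension at most $r$: if $y$ were a generic point of the support with $\dim\overline{\{y\}}>r$, the stalk hypothesis forces $s$ to vanish near $y$, a contradiction. Then one writes $\sF$ as a filtered colimit of subsheaves $\sF_S$ each generated by finitely many local sections; each such $\sF_S$ satisfies $\sF_S\cong (i_S)_*i_S^*\sF_S$ for a closed immersion $i_S\colon X_S\hookrightarrow X$ with $\dim X_S\le r$, and since $(i_S)_*$ is exact for the Nisnevich topology one gets $H^i_{\Nis}(X,\sF_S)\cong H^i_{\Nis}(X_S,i_S^*\sF_S)=0$ for $i>r$ by the cohomological dimension bound. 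Passing to the colimit finishes. No induction on $\dim X$ and no localization sequence are needed.
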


\begin{proof}
	Let $s\in \sF(X)$ be a section, and
	let $i: Z\hookrightarrow X$ be a closed immersion such that the support of $s$ is $\lvert Z\rvert$, i.e.,
	$\lvert Z\rvert$ is the closed subset of points $x\in X$ such that $s$ is non-zero in every
	open neighborhood of $x$. We claim that
	\[
	\dim(Z) \leq r.
	\]
	Otherwise, let $y\in Z$ be a generic point such that $\dim\overline{\{y\}}>r$.
	Then $i^*(\sF)(\sO_{Z,y})\cong\sF(\sO_{X,y}^h)=0$, so the section $i^*(s)$ of $i^*(\sF)$ vanishes
	on an open neighborhood $Y$ of $y$ in $Z$. 
	This means that $s$ itself vanishes on an étale neighborhood of $Y$.
	Since it also vanishes on $X\setminus Z$ and $\sF$ is a Nisnevich sheaf, it follows that $s$ 
	vanishes on the open $(X\setminus Z)\cup Y$, which is a contradiction.
	
	Let $S$ be a finite set of local sections of $\sF$, and let $\sF_S\subset \sF$ be the subsheaf generated by $S$.
	Let $i_S: X_S\hookrightarrow X$ be a closed immersion such that $\lvert X_S\rvert$ is the union of the closures of the images of the supports of the sections in $S$, and let $j_S: X\setminus X_S\hookrightarrow X$ be the complementary open immersion.
	Then $j_S^*(\sF_S)=0$ since every $s\in S$ is zero over $X\setminus X_S$. Using the gluing short exact sequence
	\[
	0 \to (j_S)_!j_S^*(\sF_S) \to \sF_S \to (i_S)_*i_S^*(\sF_S)\to 0,
	\]
	we deduce that $\sF_S\cong (i_S)_*i_S^*(\sF_S)$.
	If we now write $\sF$ as a filtered colimit $\sF \cong \colim_{S} \sF_S$, we obtain
\[
H^i_{\Nis}(X,\sF) \cong \colim_{S} H^i_\Nis(X,\sF_S) \cong \colim_S H^i_{\Nis}(X_S,i_S^*(\sF_S)).
\]
The last isomorphism holds because $(i_S)_*$ is an exact functor on Nisnevich sheaves of abelian groups.
By our preliminary result, $\dim(X_S)\leq r$.
Since $X_S$ is a Noetherian algebraic space, its
Nisnevich cohomological dimension is bounded by its Krull dimension \cite[Theorem 3.7.7.1]{SAG}.
We therefore have $H^i_{\Nis}(X_S,i_S^*(\sF_S))=0$ for $i>r$, whence $H^i_{\Nis}(X,\sF)=0$ for $i>r$.
\end{proof}

\begin{lem}\label{lem:Nis-sheaf-spectra}
	Let $X$ be a Noetherian algebraic space of finite Krull dimension, 
	let $\sF$ be a presheaf of spectra on $\mathbf{\acute Et}_X$
	satisfying Nisnevich descent, and let $n$ be an integer.
	Suppose that, for every point $y\in Y\in \mathbf{\acute Et}_X$, 
	$\sF(\sO_{Y,y}^h)$ is $(n+\dim\overline{\{y\}})$-connective. Then the spectrum $\sF(X)$ is $n$-connective.
\end{lem}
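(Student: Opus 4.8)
The plan is to deduce this from \lemref{lem:Nis-vanishing} via the Nisnevich descent spectral sequence; the statement is really the spectrum-level analogue of that lemma.

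First I would form, for each integer $q$, the Nisnevich homotopy sheaf $\pi_q\sF$ on $\mathbf{Et}_X$, that is, the Nisnevich sheafification of the presheaf $U\mapsto\pi_q(\sF(U))$. The Nisnevich stalk of $\sF$ at a point $y\in Y\in\mathbf{Et}_X$ is the filtered colimit of $\sF$ over the Nisnevich neighbourhoods of $y$, which is exactly $\sF(\sO_{Y,y}^h)$ in the notation of \lemref{lem:Nis-vanishing}; since $\pi_q$ commutes with filtered colimits of spectra, the stalk of $\pi_q\sF$ at $y$ is $\pi_q(\sF(\sO_{Y,y}^h))$. The connectivity hypothesis then translates into: the stalk of $\pi_q\sF$ at $y$ vanishes whenever $q<n+\dim\overline{\{y\}}$, equivalently whenever $\dim\overline{\{y\}}>q-n$. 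This is precisely the hypothesis of \lemref{lem:Nis-vanishing} applied to the sheaf $\pi_q\sF$ with $r=q-n$, and that lemma yields
\[
H^i_{\Nis}(X,\pi_q\sF)=0\quad\text{for all }i>q-n.
\]
In particular, taking $q<n$ gives $r<0$, so every stalk of $\pi_q\sF$ vanishes and hence $\pi_q\sF=0$; thus $\sF$ is bounded below (indeed $n$-connective) as a Nisnevich sheaf of spectra.

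Next I would invoke the descent spectral sequence. Because $\sF$ satisfies Nisnevich descent, $\sF(X)$ is the derived global sections of $\sF$, and because $X$ is Noetherian of finite Krull dimension its Nisnevich cohomological dimension is finite (\cite[Theorem 3.7.7.1]{SAG}); combined with the boundedness just established, this makes the descent spectral sequence
\[
E_2^{p,q}=H^p_{\Nis}(X,\pi_q\sF)\ \Longrightarrow\ \pi_{q-p}\,\sF(X)
\]
converge strongly (alternatively one runs the same argument up the Postnikov tower of $\sF$). To conclude, fix $m<n$ and look at the terms on the line $q-p=m$: if $q<n$ the term vanishes since $\pi_q\sF=0$, while if $q\ge n$ then $p=q-m>q-n$, so the term vanishes by the cohomological bound above. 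Hence $\pi_m\sF(X)=0$ for every $m<n$, i.e.\ $\sF(X)$ is $n$-connective.

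The one point that deserves care --- and which I would flag as the actual content rather than a routine step --- is that one must \emph{not} argue merely ``$\pi_q\sF=0$ for $q<n$, hence $\sF(X)$ is $n$-connective'': taking derived global sections on a space of cohomological dimension $d$ drops connectivity by $d$, so this alone only gives $(n-d)$-connectivity. It is essential to feed the sharper, point-dependent vanishing $H^i_{\Nis}(X,\pi_q\sF)=0$ for $i>q-n$ into the spectral sequence for \emph{every} $q$, and to check it kills precisely the bidegrees on each diagonal $q-p=m$ with $m<n$. The remaining ingredients --- strong convergence of the spectral sequence for a bounded-below sheaf on a site of finite cohomological dimension, and the identification of the stalks --- are standard; matching the index $r=q-n$ correctly is the only delicate bookkeeping.
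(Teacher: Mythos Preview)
Your proof is correct and follows essentially the same approach as the paper: form the Nisnevich homotopy sheaves $\pi_q\sF$, apply \lemref{lem:Nis-vanishing} with $r=q-n$ to obtain $H^p_{\Nis}(X,\pi_q\sF)=0$ for $p>q-n$, and conclude via the strongly convergent descent spectral sequence. The paper normalizes to $n=0$ and is terser, but the argument is the same; your explicit cautionary remark about why mere $n$-connectivity of the sheaf is insufficient is a helpful elaboration of the point the paper leaves implicit.
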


\begin{proof}
	We can assume without loss of generality that $n=0$.
	Let $\pi_*\sF$ denote the Nisnevich sheaves of homotopy groups of $\sF$.
	Since $X$ is a Noetherian algebraic space of finite Krull dimension,
	its Nisnevich topos has finite homotopy dimension \cite[Theorem 3.7.7.1]{SAG}, so that the
	descent spectral sequence
	\[
	H^p_{\Nis}(X,\pi_q\sF) \Rightarrow \pi_{q-p}\sF(X)
	\]
	is strongly convergent.
	Applying \lemref{lem:Nis-vanishing} to $\pi_q\sF$, we deduce that
	\[
	H^p_{\Nis}(X,\pi_q\sF) = 0
	\]
	for all $p>q$, and we conclude using the above spectral sequence.
\end{proof}

\begin{thm}\label{thm:DM-stack}
Let $\sX$ be a stack in $\mathbf{Stk}'$ with finite inertia, e.g.,
a separated quasi-DM stack with linearly reductive stabilizers.
Assume that $\sX$ is Noetherian of dimension $d$. Then $KH_i(\sX) = 0$ for
$i < -d$.
\end{thm}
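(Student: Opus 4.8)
The plan is to descend from $\sX$ to its coarse moduli space $X$ and apply \thmref{thm:Vanishing-KH} to the Henselian-local pieces, using \lemref{lem:Nis-sheaf-spectra} to organize the descent. Since $\sX$ has finite inertia it is quasi-DM, so $\bldim(\sX)=\dim(\sX)=d$ by \lemref{lem:dim}, and the coarse space map $\pi\colon\sX\to X$ induces a homeomorphism on underlying topological spaces, so $X$ is a Noetherian algebraic space with $\dim(X)=d$. For $Y\in\mathbf{Et}_X$, the base change $\sX_Y:=\sX\times_X Y$ is a Noetherian stack: it lies in $\mathbf{Stk}'$ because $\sX_Y\to\sX$ is representable with affine diagonal, it has finite inertia, and by \corref{cor:CP-exm} and \corref{cor:Nis-local-aff-1} every pair $(\sX_Y,\sZ)$ is perfect. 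Set $\sF(Y):=KH(\sX_Y)$. A Nisnevich square in $\mathbf{Et}_X$ pulls back along $\pi$ to a Nisnevich square of stacks (the condition on reduced complements involves only an isomorphism over a closed substack and is preserved), so $\sF$ is a presheaf of spectra on $\mathbf{Et}_X$ satisfying Nisnevich descent by \corref{cor:Nis-descent-KH}.

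I would then apply \lemref{lem:Nis-sheaf-spectra} to $\sF$ with $n=-d$; this requires, for every $y\in Y\in\mathbf{Et}_X$, that $\sF(\sO^h_{Y,y})$ be $(-d+\dim\overline{\{y\}})$-connective. Write $S=\Spec(\sO^h_{Y,y})$ and $\sX_S=\sX\times_X S$; by continuity of $KH$ (\thmref{thm:KH-Inv}(5)), $\sF(\sO^h_{Y,y})=KH(\sX_S)$. Now $S$ is a Henselian local Noetherian ring and, $Y\to X$ being étale, $\dim(S)=\dim(\sO_{X,\bar y})\le\dim(X)-\dim\overline{\{\bar y\}}=d-\dim\overline{\{y\}}=:d'$, where $\bar y$ is the image of $y$. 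Moreover $\sX_S\in\mathbf{Stk}'$ has finite inertia and, coarse spaces commuting with the flat base change $S\to X$, has coarse space $S$; being quasi-DM, $\bldim(\sX_S)=\dim(\sX_S)=\dim(S)\le d'$ by \lemref{lem:dim}.

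The crucial step is that $\sX_S$ has the resolution property. Applying the local structure theorem \thmref{thm:Nis-local-aff} at the closed point of $\sX_S$, and using that $S$ is Henselian local — so that an étale neighborhood of its closed point with trivial residue extension is a direct factor, and the étale slice can be upgraded to an isomorphism — one obtains $\sX_S\cong[U/G]$ with $U$ affine over $S$ and $G$ a linearly reductive almost multiplicative $S$-group scheme; since $S$ is Henselian local, $G$ is isotrivial, so $[U/G]$ has the resolution property as in the proof of \corref{cor:Nis-local-aff-1}. Hence \thmref{thm:Vanishing-KH} applies to $\sX_S$ and yields $KH_i(\sX_S)=0$ for $i<-\bldim(\sX_S)$, in particular for $i<-d'$; that is, $\sF(\sO^h_{Y,y})$ is $(-d+\dim\overline{\{y\}})$-connective. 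By \lemref{lem:Nis-sheaf-spectra}, $\sF(X)=KH(\sX)$ is then $(-d)$-connective, i.e.\ $KH_i(\sX)=0$ for $i<-d$.

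I expect the main obstacle to be the assertion that $\sX_S$ has the resolution property over the Henselian local base $S$: this relies on the Henselian refinement of the Alper--Hall--Rydh slice theorem underlying \thmref{thm:Nis-local-aff} — namely that the étale-local quotient presentation becomes an isomorphism once the coarse space is Henselian local (legitimate here because $\pi$ induces isomorphisms on the residue fields of closed points) — together with the isotriviality of linearly reductive almost multiplicative group schemes over Henselian local rings. The remaining ingredients — the dimension bookkeeping, the reduction to the coarse space, Nisnevich descent for $\sF$, and the bootstrap through \lemref{lem:Nis-sheaf-spectra} — are routine given the results already established. It is essential here that one passes all the way to the Henselian local rings: a naive Mayer--Vietoris argument directly on a finite Nisnevich cover of $\sX$ would not preserve the required connectivity, which is precisely why the dimension-graded hypothesis of \lemref{lem:Nis-sheaf-spectra} is needed.
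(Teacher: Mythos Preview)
Your proposal is correct and follows essentially the same route as the paper: define $\sF(Y)=KH(\sX\times_XY)$ on $\mathbf{Et}_X$ for the coarse space $X$, verify Nisnevich descent via \corref{cor:Nis-descent-KH}, use continuity and the Henselian-local case of \thmref{thm:Nis-local-aff} to identify $\sX_y^h\cong[U/G]$ with the resolution property, apply \thmref{thm:Vanishing-KH} to obtain the required connectivity at each point, and conclude with \lemref{lem:Nis-sheaf-spectra}. The paper states that $G$ is a finite group scheme over $\Spec(\sO_{Y,y}^h)$ and asserts the resolution property directly, whereas you phrase it as almost multiplicative and isotrivial over the Henselian local base; both yield the resolution property, and otherwise your argument matches the paper's line for line.
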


\begin{proof}
Let $X$ be the coarse moduli space of $\sX$. Note that $X$ is a Noetherian algebraic space of dimension $d$.
Let $\sF$ be the presheaf of spectra on $\mathbf{\acute Et}_X$ defined by
\[
\sF(Y) = KH(\sX\times_XY).
\]
By \corref{cor:Nis-descent-KH}, $\sF$ satisfies Nisnevich descent on $\mathbf{\acute Et}_X$.
For $y\in Y\in\mathbf{\acute Et}_X$, let $\sX_y^h=\sX\times_X \Spec(\sO_{Y,y}^h)$.
By continuity of $KH$ (\thmref{thm:KH-Inv} (5)), we have 
\[\sF(\sO_{Y,y}^h)\simeq KH(\sX_y^h).\]
By \thmref{thm:Nis-local-aff}, the stack $\sX_y^h$ has the form $[U/G]$, where $U$ is affine and $G$ is
a finite group scheme over $\Spec(\sO_{Y,y}^h)$.
In particular, $\sX_y^h$ belongs to $\mathbf{Stk}'$ and satisfies the resolution property.
Moreover, the dimension of $\sX_y^h$ is at most $d-\dim\overline{\{y\}}$, and it equals its blow-up dimension by \lemref{lem:dim}.
It follows from \thmref{thm:Vanishing-KH} that $\sF(\sO_{Y,y}^h)$ is $(-d+\dim\overline{\{y\}})$-connective.
By \lemref{lem:Nis-sheaf-spectra}, we deduce that $\sF(X)$ is $(-d)$-connective, i.e., that $KH_i(\sX)=0$ for $i<-d$.
\end{proof}

\subsection{Vanishing of negative $K$-theory with coefficients}\label{sec:Weibel-DM}
Let $\sX$ be a perfect stack and let $n\in\Z$.
Recall from \cite[\S5C]{KR-2} that the algebraic $K$-theory of $\sX$ with 
coefficients is defined by
\begin{align*}\label{eqn:i-invert}
K^B(\sX)[1/n] &:= \hocolim (K^B(\sX) \xrightarrow{n}
K^B(\sX) \xrightarrow{n} \cdots),\\
K^B(\sX, \Z/n) &:= K^B(\sX) \wedge {\mathbb{S}}/{n},
\end{align*}
where ${{\mathbb{S}}}/{n}$ is the mod-$n$ Moore spectrum,
and similarly for $KH$.

\begin{prop}\label{prop:KWeibel}
	Let $\sX$ be a perfect stack.
	\begin{enumerate}
		\item If $n$ is nilpotent on $\sX$, then the canonical map $K^B(\sX)[1/n]\to KH(\sX)[1/n]$
		is a homotopy equivalence.
		\item If $n$ is invertible on $\sX$, then the canonical map $K^B(\sX,\Z/n)\to KH(\sX,\Z/n)$
		is a homotopy equivalence.
	\end{enumerate}
\end{prop}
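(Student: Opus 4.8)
The plan is to reduce both statements to the vanishing of the nil-terms of $K^B$. Recall that $KH(\sX)=\hocolim_{\Delta^{\op}}K^B(\sX\times\Delta^\bullet)$, and that the operations $-[1/n]$ and $-\wedge\mathbb S/n$ (i.e.\ passing to the cofiber of multiplication by $n$) commute with homotopy colimits of spectra. Hence $KH(\sX)[1/n]\simeq\hocolim_{\Delta^{\op}}(K^B(\sX\times\Delta^\bullet)[1/n])$, and likewise with $\Z/n$-coefficients; by the standard fact that for an $\A^1$-invariant presheaf of spectra $F$ the augmentation $F(\sX)\to\hocolim_{\Delta^{\op}}F(\sX\times\Delta^\bullet)$ is an equivalence, each claim follows once we know that $K^B(-)[1/n]$ (resp.\ $K^B(-,\Z/n)$) is invariant under the projection $\sX\times\A^1\to\sX$ for every perfect stack $\sX$ on which $n$ is nilpotent (resp.\ invertible). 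Since $\sX\times\A^1$ is again perfect by \propref{prop:perfect} (1) and satisfies the same hypothesis on $n$, and since the cofiber of $K^B(\sX)\to K^B(\sX\times\A^1)$ is the nil-term $NK^B(\sX)$, this amounts to showing $NK^B(\sX)[1/n]=0$ (resp.\ $NK^B(\sX,\Z/n)=0$) for all such $\sX$. (Alternatively one may argue through the skeletal filtration of $KH$ used in the proof of \thmref{thm:Vanishing-KH}, whose successive cofibers are built from the iterated nil-terms $N^mK^B(\sX)$, each a retract of $NK^B(\sX\times\A^{m-1})$.)

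For part (2), assume $n$ is invertible on $\sX$. The point is that $NK^B_*(\sX)$ carries a natural module structure over the big Witt ring $W(\Gamma(\sX,\sO_\sX))$, extending Stienstra's and Weibel's construction for ordinary rings: reducing via Nisnevich descent (\corref{cor:Nis-descent-KB}) and \thmref{thm:Nis-local-aff} to $\sX=[U/G]$ with $U$ affine over an affine scheme $S$, the Frobenius and Verschiebung operators on equivariant nil-$K$-theory are induced by the self-maps $t\mapsto t^r$ of $\A^1$ together with the associated transfers, and yield a $W(\Gamma(S,\sO_S))$-module structure. Since $n$ is invertible on $\sX$, the unique Witt vector with ghost components $(1/n,1/n,\dots)$ has coordinates in $\Z[1/n]$, so $n$ is a unit in $W(\Gamma(\sX,\sO_\sX))$; hence multiplication by $n$ is invertible on $NK^B_*(\sX)$, and $NK^B(\sX,\Z/n)$, being the cofiber of that map, vanishes.

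For part (1), assume $n$ is nilpotent on $\sX$, so $\sX$ lies over $\Spec(\Z/n^k)$ for some $k$; by the Chinese remainder theorem this is a finite disjoint union of spectra of rings $\Z/\ell^c$ with $\ell\mid n$ prime, and since $K^B$ and $KH$ take finite disjoint unions to products we may assume $n=\ell^m$ is a prime power. Now the relative $K^B$-theory of the nilpotent ideal $\mathcal N\subset\sO_\sX$ cutting out $\sX_{\mathrm{red}}$ is $\ell$-primary torsion: for rings this is the classical statement that $K_*(R,I)$ is $\ell$-primary torsion when $I\subset R$ is nilpotent with $\ell\in I$ (Goodwillie's theorem rationally, and McCarthy's theorem together with the vanishing of $TC$ of $\Z/\ell^N$-algebras away from $\ell$), and it transports to stacks through the usual reduction to a presentation $[U/G]$. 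Hence $K^B(\sX)[1/\ell]\simeq K^B(\sX_{\mathrm{red}})[1/\ell]$, while $KH(\sX)[1/\ell]\simeq KH(\sX_{\mathrm{red}})[1/\ell]$ since $KH$ is nil-invariant (\thmref{thm:cdh-descent} with $\sY=\emptyset$). We may thus assume $\sX$ reduced, so that $\ell=0$ in $\sO_\sX$ and $\sX$ is an $\F_\ell$-stack. For such $\sX$ one has that $NK^B_*(\sX)$ is $\ell$-primary torsion — the corresponding estimate for $\F_\ell$-algebras is classical and is exactly what underlies Kelly's theorem \cite{Kelly}, and once more it transports via the reduction to $[U/G]$ — so $NK^B(\sX)[1/\ell]=0$, and the first paragraph finishes the argument.

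The crux is transferring the two ring-theoretic inputs — the big-Witt-module structure on nil-$K$-theory, and the $\ell$-primary torsion estimates for nil-$K$-theory and for relative $K$-theory of nilpotent ideals — to the stacks in $\mathbf{Stk}'$. Given localization, excision, continuity and Nisnevich descent for $K^B$ (Theorems \ref{thm:Localization}, \ref{thm:Excision} and \ref{thm:K-G-main}, and \corref{cor:Nis-descent-KB}), the reduction to quotient stacks $[U/G]$ provided by \thmref{thm:Nis-local-aff} turns each of these into a matter of running the known arguments $G$-equivariantly; the bookkeeping of the equivariance, and of finite-presentation issues when passing to $[U/G]$, is where the real care is needed.
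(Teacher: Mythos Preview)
Your approach is substantially different from the paper's, and it has a genuine gap in scope. The proposition is stated for an arbitrary \emph{perfect} stack, yet every one of your reduction steps (Nisnevich descent via \thmref{thm:Nis-local-aff}, nil-invariance of $KH$ via \thmref{thm:cdh-descent}, reduction to $[U/G]$) is only available for stacks in $\mathbf{Stk}'$. Perfect stacks can be far more general --- e.g.\ stacks in characteristic zero with non-reductive stabilizers, or stacks whose stabilizers are reductive but not almost multiplicative --- and for those neither \thmref{thm:Nis-local-aff} nor \thmref{thm:cdh-descent} is known. So as written your argument proves the proposition only on $\mathbf{Stk}'$, which, while sufficient for the applications in \S\ref{sec:Weibel-DM}, does not prove the proposition as stated. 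Moreover, the final paragraph concedes that transferring the Witt-module structure, McCarthy's theorem, and the $\ell$-torsion estimates to the equivariant setting is where ``the real care is needed'' --- none of this equivariant bookkeeping is carried out, and none of it is in the cited literature, so even on $\mathbf{Stk}'$ the argument is a sketch rather than a proof.

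By contrast, the paper's proof is a two-line black-box reduction that works uniformly for every perfect stack. The key observation is the K{\"u}nneth formula
\[
\mathsf D_{\perf}(\sX\times\A^1)\;\simeq\;\mathsf D_{\perf}(\sX)\otimes\mathsf D_{\perf}(\A^1),
\]
established in the proof of \propref{prop:perfect} (1) using dualizability of $\mathsf D_{\qc}(\A^1)$. This identifies $K^B(\sX\times\Delta^n)$ with the $K$-theory of $\mathsf D_{\perf}(\sX)\otimes\mathsf D_{\perf}(\Delta^n)$, and hence $KH(\sX)$ with Tabuada's noncommutative $\A^1$-localization of $K^B$ applied to the single dg-category $\mathsf D_{\perf}(\sX)$. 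Both assertions then follow immediately from \cite[Theorem~1.2]{Tabuada}, which already packages the Witt-module and torsion arguments you are trying to rebuild, but at the level of dg-categories over $\Z[1/n]$ or $\Z/n^k$. The advantage of this route is that it needs no structure theory of $\sX$ whatsoever: only that $\sX$ is perfect, so that $\mathsf D_{\perf}(\sX)$ is a small dg-category and the K{\"u}nneth formula holds.
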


\begin{proof}
	We have shown in the proof of \propref{prop:perfect} (1) that there is a weak equivalence of dg-categories
	\[
	\mathsf D_\perf(\sX\times\A^1) \simeq \mathsf D_\perf(\sX) \otimes \mathsf D_\perf(\A^1).
	\]
	Given this, the proposition follows immediately from \cite[Theorem 1.2]{Tabuada}.
\end{proof}

\begin{thm}\label{thm:Vanishing-K}
Let $\sX$ be a stack in $\mathbf{Stk}'$ satisfying the resolution property
	or having finite inertia.
Assume that $\sX$ is Noetherian of blow-up dimension $d$. 
Then the following hold.
\begin{enumerate}
\item
If $n$ is nilpotent on $\sX$, then $K_i(\sX)[{1}/{n}] = 0$ for any $i < -d $. 
\item
If $n$ is invertible on $\sX$, then $K_i(\sX, {\Z}/n) = 0$ for any $i < -d$.
\end{enumerate}
\end{thm}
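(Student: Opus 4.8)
The plan is to reduce the statement entirely to the homotopy $K$-theory vanishing theorems already proved, using the Weibel-type comparison of \propref{prop:KWeibel}. The first step is to observe that $\sX$ is a perfect stack: every object of $\mathbf{Stk}'$ is perfect by \corref{cor:CP-exm} and \corref{cor:Nis-local-aff-1}, so $K^B(\sX)$, $KH(\sX)$, and their localizations and coefficient variants are all defined, and \propref{prop:KWeibel} is available for $\sX$.

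The second step is to establish that $KH_i(\sX) = 0$ for $i < -d$. If $\sX$ satisfies the resolution property this is precisely \thmref{thm:Vanishing-KH}. If instead $\sX$ has finite inertia, then $\sX$ is quasi-DM, so by \lemref{lem:dim} its blow-up dimension coincides with $\dim(\sX)$; thus $\dim(\sX) = d$ and \thmref{thm:DM-stack} gives the same conclusion. The third step is to transfer this vanishing to the coefficient spectra. For (1), localization at $n$ commutes with homotopy groups, so $\pi_i(KH(\sX)[1/n]) = KH_i(\sX)[1/n] = 0$ for $i < -d$; for (2), the universal coefficient sequence
\[
0 \to KH_i(\sX)/n \to \pi_i(KH(\sX,\Z/n)) \to {}_nKH_{i-1}(\sX) \to 0
\]
shows that $\pi_i(KH(\sX,\Z/n)) = 0$ whenever $KH_i(\sX)$ and $KH_{i-1}(\sX)$ both vanish, hence for $i < -d$. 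Finally, in the nilpotent case \propref{prop:KWeibel}~(1) gives a homotopy equivalence $K^B(\sX)[1/n] \simeq KH(\sX)[1/n]$, so $K_i(\sX)[1/n] = \pi_i(K^B(\sX)[1/n]) = \pi_i(KH(\sX)[1/n]) = 0$ for $i < -d$, proving (1); in the invertible case \propref{prop:KWeibel}~(2) gives $K^B(\sX,\Z/n) \simeq KH(\sX,\Z/n)$, which yields $K_i(\sX,\Z/n) = 0$ for $i < -d$, proving (2).

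I do not expect a serious obstacle here: the substantive inputs — cdh-descent (\thmref{thm:cdh-descent}), the Killing Lemma (\propref{prop:Flatification}), the $KH$-vanishing theorems, and the comparison $K^B[1/n]\simeq KH[1/n]$ — are all in hand, and the argument above is a short formal deduction. The only points demanding a little care are the identification of the blow-up dimension with $\dim(\sX)$ in the finite-inertia case (via \lemref{lem:dim}, using that finite inertia forces $\sX$ to be quasi-DM) and the bookkeeping with the universal coefficient sequence for mod-$n$ homotopy groups.
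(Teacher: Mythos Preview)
Your proposal is correct and follows the same route as the paper, whose proof is the single sentence ``This follows from Theorems~\ref{thm:Vanishing-KH} and~\ref{thm:DM-stack} and~\propref{prop:KWeibel}.'' You have simply unpacked that citation, correctly noting along the way that finite inertia forces $\sX$ to be quasi-DM (so that \lemref{lem:dim} identifies the blow-up dimension with $\dim(\sX)$ and \thmref{thm:DM-stack} applies), and spelling out the passage to coefficients via localization and the universal coefficient sequence.
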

\begin{proof}
This follows from Theorems~\ref{thm:Vanishing-KH} and \ref{thm:DM-stack} and~\propref{prop:KWeibel}.
\end{proof}

\section*{Acknowledgments}

We are very grateful to David Rydh for several fruitful discussions about his recent work, which allowed us to significantly enhance the scope of this paper. 

The bulk of this work was completed during the authors’ stay at the Mittag-Leffler Institute as part of the research program ``Algebro-geometric and homotopical methods'', and we would like to thank the Institute and the organizers, Eric Friedlander, Lars Hesselholt, and Paul Arne Østvær, for this opportunity.


\begin{thebibliography}{99}





\bibitem{AOV} D. Abramovich, M. Olsson, A. Vistoli, {\sl Tame stacks in positive
characteristic\/}, Ann. Inst. Fourier (Grenoble), {\bf 58}, (2008), 1057--1091. \ 



\bibitem{Alper} J. Alper, {\sl Good moduli spaces for Artin stacks},
Ann. Inst. Fourier (Grenoble), {\bf 63}, no. 6, (2013), 2349--2402. \

\bibitem{AHR} J. Alper, J. Hall, D. Rydh, {\sl The {\'e}tale local structure of algebraic
stacks\/}, In preparation, (2017), https://people.kth.se/\textasciitilde dary/papers.html. \



\bibitem{AHW}
A. Asok, M. Hoyois, M. Wendt, {\sl Affine representability results in
  {${\mathbb A}^1$}-homotopy theory {I}: vector bundles}, Duke
  Math. J., {\bf 166}, no. 10, (2017), 1923--1953. \

\bibitem{Bass} H. Bass, {\sl Algebraic $K$-theory\/}, W. A. Benjamin, Inc.,
New York-Amsterdam, (1968). \

\bibitem{BIK} D. Benson, S. Iyengar, H. Krause, {\sl Stratifying modular
representations of finite groups\/}, Ann. Math., {\bf 174},  
no. 3, (2011), 1643--1684. \

\bibitem{BGT} A. Blumberg, D. Gepner, G. Tabuada, 
{\sl A universal characterization of higher algebraic $K$-theory},
Geom. Topol., {\bf 17}, (2013), 733--838. \

\bibitem{BZFN} D. Ben-Zvi, J. Francis, D. Nadler, {\sl Integral transforms and
Drinfeld centers in derived algebraic geometry\/}, J. Amer. Math. Soc., 
{\bf 23}, (2010), 909--966. \

\bibitem{Cisinski} D.-C. Cisinski, {\sl Descente par {\'e}clatements
en $K$-th{\'e}orie invariante par homotopie\/},
Ann. Math., {\bf 177}, no. 2, (2013),  425--448. \

\bibitem{CT} D.-C. Cisinski, G. Tabuada {\sl Non connective $K$-theory
via universal invariants\/}, Compos. Math., {\bf 147}, (2011), 1281--1320. \

\bibitem{CHSW} G. Corti{\~n}as, C. Haesemeyer, M. Schlichting, C. Weibel, 
{\sl Cyclic homology, cdh-cohomology and negative $K$-theory\/},  
Ann. Math., (2)  {\bf 167},  no. 2, (2008), 549--573. \


\bibitem{SGA3}
M. Demazure and A. Grothendieck, {\sl Propri{\'e}t{\'e}s g{\'e}n{\'e}rales des
  sch{\'e}mas en groupes}, Lecture Notes in Mathematics, vol. 151, Springer,
  1970. \


\bibitem{Gross} P. Gross, {\sl Tensor generators on schemes and stacks\/},
Algebr.\ Geom., {\bf 4}, no. 4, (2017), 501--522. \

\bibitem{SGA6} P. Berthelot, A. Grothendieck, and L. Illusie, 
{\sl Théorie des Intersections et Théorème de Riemann--Roch\/}, 
Lecture Notes in Mathematics, vol. 225, Springer,
  1971. \

\bibitem{GR} L. Gruson, M. Raynaud, {\sl Crit{\`e}res de platitude et de
projectivit{\'e}, Techniques de platification d'un module\/}, Invent. Math.,
{\bf 13}, (1971), 1--89. \

\bibitem{Haes} C. Haesemeyer, 
{\sl Descent properties of homotopy $K$-theory\/},
Duke Math. J., {\bf 125}, (2004), 589--620. \

\bibitem{HR-1} J. Hall, D. Rydh, {\sl Algebraic groups and compact generation of
their derived categories of representations \/}, Indiana Univ. 
Math. J., {\bf 64}, (2015),  1903--1923. \


\bibitem{HR-2} J. Hall, D. Rydh, {\sl Perfect complexes on algebraic stacks\/},
Compos. Math., {\bf 153}, no. 11, (2017), 2318-2367. \

\bibitem{HR-3} J. Hall, D. Rydh, {\sl Addendum: Étale dévissage, descent 
and pushouts of stacks\/}, J. Algebra, {\bf 498}, (2018), 398--412. \

\bibitem{HNR} J. Hall, A. Neeman, D. Rydh, {\sl One positive and two negative
results for derived categories of algebraic stacks\/},
J. Inst. Math. Jussieu, to appear, (2018), arXiv:1405.1888. \



\bibitem{Hoyois-1} M. Hoyois, {\sl The six operations in equivariant
motivic homotopy theory\/}, Adv. Math., {\bf 305}, (2017), 197--279. \

\bibitem{Hoyois-2} M. Hoyois, {\sl Cdh descent in equivariant
 homotopy $K$-theory\/}, Preprint, (2017),
arXiv:1604.06410. \

\bibitem{Kelly} S. Kelly, {\sl Vanishing of negative $K$-theory in positive
characteristic\/}, Compos. Math., {\bf 150}, (2015), 1425--1434. \

\bibitem{KS} M. Kerz, F. Strunk, {\sl On the vanishing of negative homotopy
$K$-theory\/}, J. Pure Appl. Algebra, {\bf 221}, no. 7, (2017), 1641-1644. \

\bibitem{KST} M. Kerz, F. Strunk, G. Tamme, {\sl Algebraic $K$-theory and 
descent for blow-ups\/}, Invent. Math., {\bf 211}, no. 2, (2018), 523-577. \

\bibitem{Krause} H. Krause, {\sl Localization theory for triangulated categories}, 
in: Triangulated categories, 161--235, London Math. Soc. Lecture Note Ser. 375,
Cambridge Univ. Press, 2010. \





\bibitem{KO} A. Krishna, P.A. {\O}stv{\ae}r,  {\sl Nisnevich descent for 
K-theory of Deligne--Mumford stacks\/}, 
J. $K$-Theory, {\bf 9}, (02), (2012), 291--331.\



\bibitem{KR-2} A. Krishna, C. Ravi, {\sl Algebraic $K$-theory of quotient 
stacks\/}, Ann. K-theory, {\bf 3}, no. 2, (2018), 207--233. \






\bibitem{LM} G. Laumon, L. Moret-Bailly, {\sl Champs alg{\'e}briques\/},
Ergebnisse der Mathematik und ihrer Grenzgebiete, Springer, {\bf 39}, (2000). \





\bibitem{HTT} J. Lurie, {\sl Higher Topos Theory}, Annals of Mathematical Studies, 
vol. 170, Princeton University Press, 2009. \

\bibitem{HA} J. Lurie, {\sl Higher Algebra\/}, Preprint, 
(2016), Available on the author's web page
http://www.math.harvard.edu/\textasciitilde lurie/. \

\bibitem{SAG} J. Lurie, {\sl Spectral Algebraic Geometry\/}, Preprint, 
(2018), Available on the author's web page
http://www.math.harvard.edu/\textasciitilde lurie/. \

\bibitem{Merkurjev} A. Merkurjev, {\sl Equivariant $K$-theory\/},
Handbook of $K$-theory, Springer-Verlag, {\bf 1, 2},  (2005), 925--954. \ 

\bibitem{MV} F. Morel and V. Voevodsky, {\sl $\mathbb{A}^1$-homotopy theory
 of schemes}, Publ. Math. I.H.{\'E}.S., {\bf 90}, (1999), 45--143. \

\bibitem{Neeman-1} A. Neeman, {\sl The Grothendieck duality theorem via
Bousfield's techniques and Brown representability\/}, 
J. Amer. Math. Soc., {\bf 9}, 01, (1996), 205-236. \ 


\bibitem{Rydh-approx2} D. Rydh, {\sl Noetherian approximation of algebraic spaces and stacks\/},
J. Algebra, {\bf 422}, (2015), 105--147. \

\bibitem{Rydh-approx} D. Rydh, {\sl Approximation of sheaves on algebraic stacks\/},
Int. Math. Res. Not., {\bf 2016}, no. 3, (2016), 717--737. \

\bibitem{Rydh-2} D. Rydh, {\sl Equivariant flatification, \'etalification and compactification\/},
In preparation, (2017), https://people.kth.se/\textasciitilde dary/papers.html. \

\bibitem{Schl} M. Schlichting, {\sl Negative K-theory of derived
categories\/}, Math. Z., {\bf 253}, no. 1, 2006, 97-134.  \

\bibitem{SP} Stacks Project authors, {\sl Stacks Project\/}, 
(2017), http://www.stacks.math.columbia.edu. \

\bibitem{Tabuada} G. Tabuada, {\sl $\mathbb{A}^1$-homotopy invariance of algebraic
K-theory with coefficients and du Val singularities\/}, 
Ann. $K$-theory, {\bf 2}, no. 1, (2017), 1--25. \

\bibitem{Thom} R. Thomason, {\sl Algebraic K-theory of group scheme 
actions\/}, Algebraic Topology and Algebraic K-theory, Ann. Math. Stud., 
Princeton, {\bf 113}, (1987), 539--563.\

\bibitem{Thom1} R. Thomason, {\sl Equivariant resolution, linearization, and
 Hilbert's fourteenth problem over arbitrary base schemes\/}, Adv. Math. 
{\bf 65}, no. 1, (1987),16--34.\



\bibitem{TT} R. Thomason, T. Trobaugh, {\sl Higher algebraic K-theory of 
schemes and of derived categories\/}, The Grothendieck Festschrift, Vol. III, 
Progr. Math. {\bf 88}, Birkh{\"a}user Boston, Boston, MA, 1990, 247--435.\




\bibitem{Voev} V. Voevodsky, {\sl Homotopy theory of simplicial sheaves 
in completely decomposable topologies\/}, J. Pure Appl. Algebra,  {\bf 214}
(2010),  1384-1398. \


\bibitem{Weibel-1} C. Weibel, {\sl Homotopy algebraic $K$-theory\/},
AMS Contemp. Math., {\bf 83}, (1988), 461--488. \


\bibitem{Weibel-3} C. Weibel, {\sl The negative $K$-theory of normal surfaces\/},
 Duke Math. J., {\bf 108}, (2001), 1–35. \

\end{thebibliography}
\end{document}